\setlist{nosep}
\theoremstyle{definition}
\newtheorem{defin}{Definition}[section]
\theoremstyle{plain}
\newtheorem{teor}[defin]{Theorem}
\newtheorem{lem}[defin]{Lemma}
\newtheorem{pro}[defin]{Proposition}
\newtheorem{cor}[defin]{Corollary}
\theoremstyle{definition}
\newtheorem{esm}[defin]{Example}
\newtheorem{osr}[defin]{Remark}
\numberwithin{equation}{section}
\renewcommand{\O}{\Omega}
\newcommand{\D}{\mathcal{D}}
\renewcommand{\H}{\mathcal{H}}
\newcommand{\Set}{\mathcal{S}}
\newcommand{\St}{\mathsf{S}}
\newcommand{\B}{\mathcal{B}}
\newcommand{\Eo}{\mathcal{E}_\Omega}
\newcommand{\rn}{\mathfrak{n}}
\newcommand{\Up}{\Upsilon}
\renewcommand{\L}{\Lambda}
\newcommand{\Po}{\mathfrak{P}}
\newcommand{\n}[1]{\|#1\|}
\newcommand{\nor}{\|\cdot \|}
\newcommand{\noo}{\|\cdot\|_\O}
\renewcommand{\l}{\langle}
\renewcommand{\r}{\rangle}
\newcommand{\R}{\mathbb{R}}
\newcommand{\C}{\mathbb{C}}
\newcommand{\pint}{\l\cdot|\cdot\r}
\newcommand{\pin}[2]{\l#1 | #2\r}
\newcommand{\no}{\noindent}
\newcommand{\ol}{\overline}
\newcommand{\ull}{\underline} 
\newcommand{\Ma}{\mathscr{M}_{\ull{\alpha}}}
\newcommand{\sub}{\subseteq}
\newcommand{\mez}{\frac{1}{2}}
\renewcommand{\ll}{{\it l}}
\fillast \fontsize{12}{15}\scshape}{\thesection.}{0.8em}{}
\begin{document}
	
\thispagestyle{plain}

\begin{center}
	\large
	{\bf MAXIMAL OPERATORS WITH RESPECT \\ TO THE NUMERICAL RANGE} \\
	\vspace*{0.5cm}
	ROSARIO CORSO
\end{center}

\normalsize 
\vspace*{1cm}	

\small 
	
	\begin{minipage}{11.8cm}
		{\scshape Abstract.} 
		Let $\rn$ be a nonempty, proper, convex subset of $\C$. The $\rn$-maximal operators are defined as the operators having numerical ranges in $\rn$ and are maximal with this property. Typical examples of these are the maximal symmetric (or accretive or dissipative) operators, the associated to some sesquilinear forms (for instance, to closed sectorial forms), and the generators of some strongly continuous semi-groups of bounded operators. In this paper	the $\rn$-maximal operators are studied and some characterizations of these in terms of the resolvent set are given.
	\end{minipage}

	\vspace*{.5cm}
	
	\begin{minipage}{11.8cm}
		{\scshape Keywords:} 
		numerical range, maximal operators, sector, strip, sesquilinear forms, strongly continuous semi-groups,  Cayley transform.
	\end{minipage}
	
	\vspace*{.5cm}
	
	\begin{minipage}{11.8cm}
		{\scshape MSC (2010):} 47A20, 47A12, 47B44, 47A07.	
	\end{minipage}

\vspace*{1cm}
\normalsize
	
	\section{Introduction}

The {\it numerical range} of an operator $T$ with domain $D(T)$ on a complex Hilbert space $\H$, with inner product $\pint$ and norm $\nor$, is the convex subset of $\C$
$$
\rn_T:=\{\pin{T\xi}{\xi}:\xi\in D(T), \n{\xi}=1\}.
$$
Consider a proper convex subset $\rn$ of $\C$. We say that $T$ is $\rn$-maximal if $\rn_T\subseteq \rn$ and $T$ has no proper extension with this property.
This concept has the maximal symmetric, accretive and dissipative operators as special cases. 

We know by von Neumann \cite{Neumann} that a (densely defined) symmetric operator $T$ is maximal symmetric if and only if either the half-plane  $\{\lambda \in \C:\Im \lambda >0\}$ or the half-plane $\{\lambda \in \C:\Im \lambda <0\}$ is contained in the resolvent set $\rho(T)$. This, in turn, is equivalent to say that a defect index of $T$, $\dim R(T-iI)^\perp$ or $\dim R(T+iI)^\perp$, is zero. Phillips proved a similar result in \cite{Phillips}, i.e., that a densely defined dissipative operator is maximal dissipative if and only if $\lambda \in \rho(T)$ for some $\Re\lambda>0$, if and only if $\{\lambda \in \C: \Re\lambda >0\}\subseteq \rho(T)$.

In this paper we deal with the analogue characterization for a general $\rn$-maximal operator $T$, taking into account that the defect index of $T$ is defined for $\lambda \in \ol{\rn}^c$ (i.e., the complement of the closure of $\rn$) as $\dim R(T-\lambda I)^\perp$, and is constant for all $\lambda$ contained in a connected component of $\ol{\rn}^c$. If $T$ is densely defined and $\rn_T\subseteq \rn$, then $T$ is closable and its closure $\ol{T}$ has numerical range in $\ol{\rn}$. For this reason an assumption that we make, in order to have closed $\rn$-maximal operator, is that $\rn$ is closed.\\
In particular, the new cases studied are given by \hyperref[th_car_n_max_set]{Theorem} \ref{th_car_n_max_set} and \hyperref[th_car_strip]{Theorem} \ref{th_car_strip}, where $\rn$ is a closed subset of a {\it sector} or of a {\it closed strip}, respectively. The first one is based on the Friedrichs extension of a densely defined sectorial operator, while the second one uses the fact that a densely defined operator $T$ with numerical range in a horizontal strip is uniquely decomposable (like for bounded operator) as $T=S+iB$ (\hyperref[lem_decomp_strip]{Lemma} \ref{lem_decomp_strip}), in which $S$ and $B$ are symmetric and $B$ is bounded. If the strip is closed, the sets of extensions of $T$ and $S$ are in a one-to-one correspondence (\hyperref[corr_extens_strip]{Lemma} \ref{corr_extens_strip}).

For many classes of densely defined and $\rn$-maximal operators, the resolvent set contains a connected component of $\ol{\rn}^c$. This property holds for generators of some strongly continuous semi-groups (or groups) of bounded operators, for instance of contractions. We give more examples of semi-groups and corresponding $\rn$-maximal generator in \hyperref[sec:exampl]{Section} \ref{sec:exampl}. We also recall that for an operator $T$ with numerical range in $\rn$ and satisfying $\ol{\rn}^c \subseteq \rho(T)$ it is possible to define a so-called functional calculus developed in many work, for instance in \cite{Batty,Batty2,Delau,Haase_b,McIntosh_fc}.

In \hyperref[sec:corr]{Section} \ref{sec:corr} we talk about correspondences (through a map like Cayley transform) between extensions of an operator with particular numerical range and bounded operators.

Another area where $\rn$-maximal operators occur concerns sesquilinear forms on a Hilbert space. The operator $T$ {\it associated} to a sesquilinear form $\O$, with dense domain $\D$ in $\H$, has domain	
\begin{equation}
\label{eq_dom_intro}
D(T)=\{\xi \in \D:\exists \chi \in \H, \O(\xi,\eta)=\pin{\chi}{\eta}, \forall \eta \in \D\}
\end{equation}
and it is defined by $T\xi=\chi$, for all $\xi\in D(T)$ and $\chi$ as in (\ref{eq_dom_intro}) (the density of $\D$ ensures that this definition is well-posed). Hence, $\O$ is {\it represented} by $T$, i.e.,
\begin{equation}
\label{eq_intro}
\O(\xi,\eta)=\pin{T\xi}{\eta}, \qquad \forall \xi\in D(T),\eta \in \D.
\end{equation}
The domain of $T$ might be very small, therefore one searches conditions so that $T$ is densely defined (and also closed). Representation theorems are studied by Kato (who consider the {\it closed sectorial} forms) and then by several author (see the references of \cite{RC_CT}). Returning to the problem of the maximality,  Kato's result says that the operator associated to a densely defined closed sectorial form is m-sectorial, i.e., $\Set$-maximal, with $\Set$ a sector containing the numerical range of $\O$. In \hyperref[sec:solv]{Section} \ref{sec:solv} we consider {\it solvable} sesquilinear forms, which, with the representation (\ref{eq_intro}), have been defined and studied in \cite{Tp_DB}. The analysis of solvable forms has been continued in \cite{RC_CT,Second}. 
Here, for a general proper, convex subset $\rn$ of $\C$ we give some sufficient conditions for the $\rn$-maximality of the associated operator in \hyperref[th_op_ass_max]{Theorem} \ref{th_op_ass_max} and \hyperref[cor_form_n_max]{Corollary} \ref{cor_form_n_max}. However, if the numerical range of a solvable form is contained in a strip, then the associated operator is always $\rn$-maximal and, actually, we have the stronger result that $\ol{\rn}^c\subseteq \rho(T)$ (\hyperref[th_strip_op_assoc]{Theorem} \ref{th_strip_op_assoc}).

\section{$\rn$-maximal operators}
\label{sec:max_oper}

In this paper we indicate by $\H$ a complex Hilbert space, with inner product $\pint$ and norm $\nor$. The domain and the range of a (linear) operator $T$ are denoted by $D(T)$ and $R(T)$, respectively. The set of bounded operators defined on the whole of $\H$ is denoted by $\B(\H)$.\\

\no	For the notion and properties of the numerical range we refer the reader to \cite[Ch. V, Sect. 3]{Kato}, \cite[Ch. 2]{Schm} and also \cite{Gust_Rao,Halmos} for bounded operators. \\
The {\it numerical range} of an operator $T$ on $\H$, with domain $D(T)$, is the convex subset of the complex plane
$$
\rn_T:=\{\pin{T\xi}{\xi}:\xi \in D(T), \n{\xi}=1\}.
$$
If $T$ is densely defined and $\rn_T\sub \R$ (or equivalently $T\sub T^*$) then it is called {\it symmetric}, while if $\rn_T\subseteq \{\lambda\in \C:\Re \lambda\geq 0\}$ then $T$ is said {\it accretive}.\\
If $T$ is bounded then $\rn_T$ is bounded, and the converse is true provided that $T$ is densely defined.\\

\no	Suppose that $\rn_T\neq \C$. Since $\rn_T$ is a convex subset of $\C$ then the complement $\ol{\rn_T}^c$ is connected or it consists of two half-planes (this second possibility holds if and only if $\ol{\rn_T}$ is a strip, i.e., a subset bounded by two parallel straight lines).\\
Assume that $T$ is closable and $\lambda \in \ol{\rn_T}^c$, then the number $\dim R(T-\lambda I)^\perp$ is constant in each connected component of $\ol{\rn_T}^c$, and it is called a {\it defect index} of $T$. Therefore, for an operator whose numerical range is not $\C$ the defect indexes are at least one and at most two (note that, actually, the defect index is defined and is constant in each connected component of the so-called regularity domain \cite[Definition 2.1]{Schm}; however, we are interested only in defect indexes defined outside the numerical range).\\
Finally, we recall some results involving the numerical range, the resolvent and the spectrum of an operator (see \cite[Ch. 22]{Halmos} and \cite[Ch. 2]{Schm}).

\begin{lem}
	\label{lem_intro}
	Let $T$ be an operator on $\H$, with numerical range $\rn_T$, resolvent set $\rho(T)$, point spectrum $\sigma_p(T)$, continuous spectrum $\sigma_c(T)$ and residual spectrum $\sigma_r(T)$. Then, the following assertions hold:
	\begin{enumerate}
		\item $\sigma_p(T)\subseteq \rn_T$;
		\item $\sigma_c(T)\subseteq \ol{\rn_T}$;
		\item each connected component of $\ol{\rn_T}^c$ is entirely contained in $\rho(T)$ or in $\sigma_r(T)$;
		\item if $T\in \B(\H)$ then $\sigma_r(T) \subseteq \rn_T$, i.e., the spectrum of $T$ is contained in $\ol{\rn_T}$;
		\item if $\lambda\in \ol{\rn_T}^c \cap \rho(T)$, then $\n{(T-\lambda I)^{-1}}\leq (\text{dist}(\lambda,\ol{\rn_T}))^{-1}$.
	\end{enumerate}
\end{lem}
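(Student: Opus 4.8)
The plan is to deduce all five items from a single elementary estimate. First I would fix $\lambda\notin\ol{\rn_T}$ and set $d:=\text{dist}(\lambda,\ol{\rn_T})>0$. Applying the definition of $\rn_T$ to the unit vector $\xi/\n\xi$ shows that $|\pin{T\xi}{\xi}-\lambda\n\xi^2|\ge d\,\n\xi^2$ for every nonzero $\xi\in D(T)$, hence, by the Cauchy--Schwarz inequality,
$$
\n{(T-\lambda I)\xi}\;\ge\;\frac{|\pin{(T-\lambda I)\xi}{\xi}|}{\n\xi}\;=\;\frac{|\pin{T\xi}{\xi}-\lambda\n\xi^2|}{\n\xi}\;\ge\;d\,\n\xi .
$$
I will refer to this as the lower bound $(\star)$: in particular $T-\lambda I$ is injective with closed range for every $\lambda\notin\ol{\rn_T}$.

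Item (1) is immediate, since a unit eigenvector $\xi$ for an eigenvalue $\lambda$ gives $\lambda=\pin{T\xi}{\xi}\in\rn_T$. For item (2), if $\lambda\notin\ol{\rn_T}$ then by $(\star)$ the range of $T-\lambda I$ is closed, so it cannot be a proper dense subspace; hence $\lambda\notin\sigma_c(T)$, which is exactly $\sigma_c(T)\sub\ol{\rn_T}$. Item (5) is likewise a direct consequence of $(\star)$: if moreover $\lambda\in\rho(T)$, then $(T-\lambda I)^{-1}\in\B(\H)$, and substituting $\xi=(T-\lambda I)^{-1}\zeta$ in $(\star)$ gives $\n\zeta\ge d\,\n{(T-\lambda I)^{-1}\zeta}$ for all $\zeta\in\H$, i.e. $\n{(T-\lambda I)^{-1}}\le d^{-1}$. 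So I would establish $(\star)$ first, then (1), (2) and (5).

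Item (3) I would then settle by a connectedness argument that uses (5). By the dichotomy noted after $(\star)$, for $\lambda$ in a connected component $U$ of $\ol{\rn_T}^c$ the operator $T-\lambda I$ is injective with closed range, so either that range is all of $\H$, giving $\lambda\in\rho(T)$, or it is a proper closed subspace, giving $\lambda\in\sigma_r(T)$; thus $U\sub\rho(T)\cup\sigma_r(T)$. Now $\rho(T)\cap U$ is open, and it is also relatively closed in $U$: if $\lambda_n\to\lambda\in U$ with $\lambda_n\in\rho(T)$, then $\text{dist}(\lambda_n,\ol{\rn_T})$ is bounded below for $n$ large, so by (5) the norms $\n{(T-\lambda_n I)^{-1}}$ are uniformly bounded, and the standard fact that the open ball of radius $\n{(T-\mu I)^{-1}}^{-1}$ about $\mu\in\rho(T)$ lies in $\rho(T)$ then forces $\lambda\in\rho(T)$. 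Since $U$ is connected, $\rho(T)\cap U$ is either empty or all of $U$, which is the claim.

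Finally, for item (4) let $T\in\B(\H)$ and $\lambda\in\sigma_r(T)$, so that $R(T-\lambda I)^\perp\ne\{0\}$; any nonzero $\eta$ there satisfies $\pin{T\xi}{\eta}=\lambda\pin{\xi}{\eta}$ for all $\xi\in\H$, i.e. $T^*\eta=\ol\lambda\,\eta$, hence $\ol\lambda\in\sigma_p(T^*)$, and by item (1) applied to $T^*$ we get $\ol\lambda\in\rn_{T^*}$. Because $T$ is bounded and everywhere defined, $\pin{T^*\xi}{\xi}=\ol{\pin{T\xi}{\xi}}$ for each unit $\xi$, so $\rn_{T^*}=\{\ol z:z\in\rn_T\}$ and therefore $\lambda\in\rn_T$; together with (1) and (2) this yields $\sigma(T)=\sigma_p(T)\cup\sigma_c(T)\cup\sigma_r(T)\sub\ol{\rn_T}$. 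I do not expect a genuine obstacle: everything rests on the two-line estimate $(\star)$. The only points to watch are the order of the items — (3) uses (5) — and, in (4), that the identity $\rn_{T^*}=\{\ol z:z\in\rn_T\}$ requires $T$ to be bounded with domain all of $\H$.
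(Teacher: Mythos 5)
The paper does not actually prove this lemma: it is recalled as known, with pointers to \cite[Ch. 22]{Halmos} and \cite[Ch. 2]{Schm}, so there is no in-text argument to compare yours against. Your proof is the standard one and is essentially correct: the lower bound $(\star)$, $\n{(T-\lambda I)\xi}\ge \operatorname{dist}(\lambda,\ol{\rn_T})\,\n{\xi}$, is exactly the right engine, and items (1), (4) and (5) follow from it as you say. For item (3), your clopen-ness argument via the Neumann series and the uniform resolvent bound of item (5) is a legitimate alternative to the route the cited sources take (local constancy of the defect number $\dim R(T-\lambda I)^\perp$ on the regularity domain); both are standard and yours is self-contained.

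One point you must make explicit: the inference ``$T-\lambda I$ bounded below $\Rightarrow$ $R(T-\lambda I)$ closed,'' which you use in items (2) and (3), requires $T$ to be closed. Without that hypothesis it fails: take $T$ to be the identity restricted to a dense proper subspace $D$ of $\H$; then $\rn_T=\{1\}$, $T-0\cdot I$ is bounded below, yet $R(T)=D$ is dense and not closed, so $0\in\sigma_c(T)$ while $0\notin\ol{\rn_T}$, and item (3) fails as well since $\rho(T)=\varnothing$. The lemma as stated in the paper suppresses this hypothesis too (the cited sources formulate these facts for closed, respectively bounded, operators, and item (5) presupposes closedness anyway via $\rho(T)\neq\varnothing$), so this is a defect of the statement as much as of your proof --- but in a blind write-up you should either assume $T$ closed from the outset or flag where closedness enters. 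With that caveat added, the argument is complete.
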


\no	Now we give a new definition. Throughout this paper, if not otherwise specified, we assume that  $\rn \subset \C$ is a nonempty, convex, proper subset of $\C$.

\begin{defin}
	\label{def_n_max}
	An operator $T$, with numerical range $\rn_T$, is said to be {\it $\rn$-maximal} if the following conditions hold:
	\begin{enumerate}
		\item $\rn_T\subseteq \rn$;
		\item if $T'$ is an operator, with numerical range $\rn_{T'}$, such that $T\subseteq T'$ and $\rn_{T'}\subseteq \rn$, then $T=T'$.
	\end{enumerate}
\end{defin}

\begin{osr}
	Maximal accretive, and maximal symmetric operators are special cases of $\rn$-maximal operators, that is they are obtained considering $\rn=\{\lambda\in \C:\Re \lambda \geq 0\}$ and $\rn=\R$, respectively.
\end{osr}

\no	In \cite{Neumann} von Neumann proved the next celebrated characterization (see also \cite[Ch. 13]{Schm}).

\begin{teor}
	\label{car_simm}
	Let $T$ be a symmetric operator on $\H$. The following statements are equivalent:
	\begin{enumerate}
		\item $T$ is maximal symmetric;
		\item $R(T-i I )=\H$ or $R(T+i I )=\H$ (i.e., a defect index of $T$ is $0$);
		\item a connected component of $\C\backslash\R$ is contained in the resolvent set $\rho(T)$ of $T$.
	\end{enumerate}
	Moreover $T$ is self-adjoint if, and only if, $R(T-i I )=R(T+i I )=\H$ if, and only if, $\C\backslash\R\subseteq \rho(T)$.
\end{teor}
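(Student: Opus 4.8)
The plan is to prove $(1)\Leftrightarrow(2)\Leftrightarrow(3)$ and the self-adjointness statement together, everything hinging on the elementary bound
$$\n{(T-\lambda I)\xi}\ge|\Im\lambda|\,\n{\xi}\qquad(\xi\in D(T),\ \Im\lambda\ne0),$$
which follows from $\pin{T\xi}{\xi}\in\R$ via $\Im\pin{(T-\lambda I)\xi}{\xi}=-(\Im\lambda)\n{\xi}^2$ and Cauchy--Schwarz. I would first reduce to $T$ closed: if $T\ne\ol T$, then $\ol T$ is a proper symmetric extension (so (1) fails), $R(T\pm iI)=\H$ would make $(T\pm iI)^{-1}\in\B(\H)$ and hence $T$ closed (so (2) fails), and $\rho(T)\ne\emptyset$ likewise forces $T$ closed (so (3) fails); thus in that case all three statements fail. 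Assuming $T$ closed, the bound shows that $T-\lambda I$ is injective with closed range for $\Im\lambda\ne0$, so $\lambda\in\rho(T)$ iff $R(T-\lambda I)^\perp=\{0\}$, i.e. iff the defect index $\dim R(T-\lambda I)^\perp$ vanishes.

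Next $(2)\Leftrightarrow(3)$. Since the defect index is constant on the upper half-plane, with value $d_+:=\dim R(T-iI)^\perp$, and on the lower half-plane, with value $d_-:=\dim R(T+iI)^\perp$, the previous remark gives that the upper half-plane is contained in $\rho(T)$ exactly when $d_+=0$, i.e. $R(T-iI)=\H$, and symmetrically for the lower half-plane; this is the desired equivalence. (One may also read it off \hyperref[lem_intro]{Lemma}~\ref{lem_intro}(3): each half-plane lies in $\rho(T)$ or in $\sigma_r(T)$, and it lies in $\sigma_r(T)$ precisely when the corresponding defect index is positive.) Then $(2)\Rightarrow(1)$ is short: assume $R(T-iI)=\H$ and let $T'\supseteq T$ be symmetric, so $T'-iI$ is injective by the bound; for $\xi\in D(T')$ pick $\zeta\in D(T)$ with $(T-iI)\zeta=(T'-iI)\xi$, whence $(T'-iI)\zeta=(T'-iI)\xi$ and so $\xi=\zeta\in D(T)$; hence $T'=T$.

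The main obstacle is $(1)\Rightarrow(2)$, which I would prove by contraposition: assuming $d_+>0$ and $d_->0$, I must exhibit a proper symmetric extension of $T$. Here I would use the Cayley transform $V=(T-iI)(T+iI)^{-1}$, an isometry from $R(T+iI)$ onto $R(T-iI)$ whose inverse Cayley transform $i(I+V)(I-V)^{-1}$ equals $T$. Since $R(T+iI)^\perp=\ker(T^*-iI)$ and $R(T-iI)^\perp=\ker(T^*+iI)$ are both nonzero, I choose unit vectors $\eta_+,\eta_-$ in them and extend $V$ to an isometry $V'$ with $V'\eta_+=\eta_-$; because $R(I-V')\supseteq R(I-V)=D(T)$ is dense, $V'$ is again the Cayley transform of a closed symmetric operator $T'$, and since $V'$ properly extends $V$, the operator $T'$ properly extends $T$ — contradicting (1). (Alternatively one invokes von Neumann's formula $D(T^*)=D(T)\dotplus\ker(T^*-iI)\dotplus\ker(T^*+iI)$ and extends $T$ by adjoining $\eta_++\eta_-$.) This existence-of-a-proper-extension step is where the genuine content of the theorem lies; the rest is bookkeeping with the bound above.

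Finally the self-adjointness assertion: if $T=T^*$ then $\ker(T^*\mp iI)=\ker(T\mp iI)=\{0\}$ by the bound, so $R(T\pm iI)=\H$; if $R(T\pm iI)=\H$ then $d_+=d_-=0$, so every $\lambda\notin\R$ lies in $\rho(T)$, i.e. $\C\back\R\sub\rho(T)$; and if $\C\back\R\sub\rho(T)$ then $R(T\pm iI)=\H$ (as $\pm i\in\rho(T)$), so for $\xi\in D(T^*)$ I pick $\zeta\in D(T)$ with $(T-iI)\zeta=(T^*-iI)\xi$, whence $\xi-\zeta\in\ker(T^*-iI)=R(T+iI)^\perp=\{0\}$, giving $T^*\sub T$ and hence $T=T^*$. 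This closes the cycle.
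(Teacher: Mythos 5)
Your proof is correct, but note that the paper does not prove this theorem at all: it is stated as von Neumann's classical result with a citation to \cite{Neumann} and \cite[Ch.~13]{Schm}, so there is nothing internal to compare against. What you have written is essentially the textbook argument, and it is in fact the very route the paper gestures at in Section~\ref{sec:corr}, where it remarks that Theorem~\ref{car_simm} ``can be proved'' via the Cayley transform $\kappa(T)=(T-iI)(T+iI)^{-1}$. All the steps check out: the lower bound $\n{(T-\lambda I)\xi}\geq|\Im\lambda|\n{\xi}$, the reduction to closed $T$, the identification of $\lambda\in\rho(T)$ with vanishing of $\dim R(T-\lambda I)^\perp$, the constancy of the defect indices on each half-plane (which the paper itself recalls in Section~\ref{sec:max_oper}), and the construction of a proper symmetric extension from a proper isometric extension of the Cayley transform when $d_+>0$ and $d_->0$. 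Two small points you leave implicit and could state for completeness: that $I-V'$ remains injective (this follows from $V'$ being an isometry with $R(I-V')$ dense, since any fixed vector of $V'$ is orthogonal to $R(I-V')$), and that $D(T')=R(I-V')$ strictly contains $D(T)=R(I-V)$ precisely because $I-V'$ is injective and $\eta_+\notin D(V)$. With those remarks added, the argument is a complete and standard proof of a statement the paper only cites.
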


\no		We notice here that a similar characterization of selfadjointness has been given in \cite[Theorem 5.1]{Seb_Tarc} which simultaneously concerns  both real and complex Hilbert spaces. The next characterization (which follows the framework of symmetric operator) covers accretive operators and is due to Phillips \cite{Phillips}. Actually, Phillips worked with dissipative operators $T$, i.e., operators with numerical range in $\{\lambda \in \C: \Re \lambda \leq 0\}$. But, since $-T$ is accretive, the result for accretive operators follows easily from the dissipative case.

\begin{teor}[{\cite[Ch. I]{Phillips}}]
	\label{car_accr}
	Let $T$ be an accretive operator on $\H$. The following statements are equivalent:
	\begin{enumerate}
		\item $R(T-\lambda I )=\H$ for some $\lambda\in \C$ with $\Re \lambda <0$;
		\item the half-plane $\{\lambda \in \C: \Re \lambda <0\}$ is contained in the resolvent set $\rho(T)$ of $T$ (i.e., the defect index of $T$ is $0$);
		\item $T$ is maximal accretive and densely defined;
		\item $T$ is maximal accretive and closed.
	\end{enumerate}
\end{teor}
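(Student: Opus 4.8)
The statement is a classical result of Phillips, and the plan is to prove the equivalence $(1)\Leftrightarrow(2)$, the implications $(2)\Rightarrow(3)\Rightarrow(4)$, and — the substantive part — $(4)\Rightarrow(2)$. Here $\rn=\{\lambda\in\C:\Re\lambda\ge0\}$ is closed and $\ol{\rn}^c=\{\lambda\in\C:\Re\lambda<0\}$ is a single open half-plane which, since $\ol{\rn_T}\subseteq\rn$, lies inside one connected component of $\ol{\rn_T}^c$. The workhorse is the accretivity estimate $\n{(T-\lambda I)\xi}\ge|\Re\lambda|\,\n{\xi}$, valid for $\Re\lambda<0$ and $\xi\in D(T)$ (immediate from $\Re\pin{T\xi}{\xi}\ge0$) and equally valid for every accretive extension of $T$; in particular $T-\lambda I$ is injective and bounded below for such $\lambda$. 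Then $(1)\Rightarrow(2)$ is quick: if $R(T-\lambda_0 I)=\H$ for some $\Re\lambda_0<0$, the estimate makes $T-\lambda_0 I$ boundedly invertible, so $\lambda_0\in\rho(T)$, and Lemma \ref{lem_intro}(3) pushes the connected component of $\ol{\rn_T}^c$ through $\lambda_0$ — which contains all of $\{\Re\lambda<0\}$ — into $\rho(T)$; the converse $(2)\Rightarrow(1)$ is trivial.

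For $(2)\Rightarrow(3)$ I would work with $\lambda_0=-1\in\rho(T)$: closedness of $T$ is automatic because $(T+I)^{-1}\in\B(\H)$; density of $D(T)$ follows by pairing a vector $\eta\perp D(T)$ with a solution of $(T+I)\xi=\eta$ and taking the real part of $\pin{(T+I)\xi}{\xi}=0$; and maximality follows because, for any accretive $T'\supseteq T$ and $\xi'\in D(T')$, one can match $(T'+I)\xi'$ by some $(T+I)\xi_0$ with $\xi_0\in D(T)$, and injectivity of $T'+I$ forces $\xi'=\xi_0$. Then $(3)\Rightarrow(4)$ is immediate: a densely defined $T$ with $\rn_T\subseteq\rn$ ($\rn$ closed) is closable with $\ol T$ accretive, so maximality gives $T=\ol T$.

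The hard part is $(4)\Rightarrow(2)$, which I would prove by contradiction using the Cayley transform. Let $T$ be maximal accretive and closed, and let $C$ be the connected component of $\ol{\rn_T}^c$ containing $\{\Re\lambda<0\}$; by Lemma \ref{lem_intro}(3), $C\subseteq\rho(T)$ (and then $(2)$ holds) or $C\subseteq\sigma_r(T)$, so I would suppose $-1\in\sigma_r(T)$ and seek a contradiction. Then $R(T+I)$ is a proper closed subspace of $\H$ (closed since $T+I$ is closed and bounded below). Set $V:=(T-I)(T+I)^{-1}$ on $D(V)=R(T+I)$; the identity $\n{(T+I)\xi}^2-\n{(T-I)\xi}^2=4\Re\pin{T\xi}{\xi}\ge0$ shows $V$ is a contraction, while $(I-V)(T+I)\xi=2\xi$ shows $I-V$ is injective on $D(V)$ and $(I-V)(D(V))=2D(T)$. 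Let $P$ be the orthogonal projection onto $D(V)$ and put $V_1:=VP$, a contraction on all of $\H$ extending $V$; crucially $V_1$ has no nonzero fixed vector, for $V_1\eta=\eta$ forces $\n{\eta}=\n{VP\eta}\le\n{P\eta}\le\n{\eta}$, hence $\eta=P\eta\in D(V)$ and $V\eta=\eta$, hence $\eta=0$. Then $T_1:=(I+V_1)(I-V_1)^{-1}$, with $D(T_1)=(I-V_1)(\H)$, is a well-defined operator; it is accretive because $\Re\pin{T_1\eta}{\eta}=\n{\mu}^2-\n{V_1\mu}^2\ge0$ when $\eta=(I-V_1)\mu$; it extends $T$ because $V_1\supseteq V$; and since $I-V_1$ is injective and $D(V)\subsetneq\H$, one has $D(T_1)=(I-V_1)(\H)\supsetneq(I-V_1)(D(V))=2D(T)=D(T)$, so $T_1$ is a \emph{proper} accretive extension of $T$ — contradicting maximality. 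Thus $C\subseteq\rho(T)$, which is $(2)$. I expect the only genuinely non-routine point to be this construction: turning a nonzero defect into a proper accretive extension, i.e.\ checking that the extension $V_1=VP$ of the Cayley transform remains a contraction with $1$ outside its point spectrum.
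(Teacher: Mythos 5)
Your proof is correct, and it follows essentially the route the paper itself points to: Theorem~\ref{car_accr} is quoted from Phillips without proof, but Section~\ref{sec:corr} notes that Phillips established it via the transform $\tau(T)=(T-I)(T+I)^{-1}$, which is exactly your mechanism for $(4)\Rightarrow(2)$, while your arguments for $(1)\Rightarrow(2)\Rightarrow(3)\Rightarrow(4)$ reproduce the range/density/closability reasoning of Proposition~\ref{pro_rn_max_dens} and Proposition~\ref{pro_rn_max_chius}. The one step worth keeping explicit is the passage from $\n{P\eta}=\n{\eta}$ to $P\eta=\eta$ in showing $I-V_1$ is injective, which you handle correctly.
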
 

\begin{osr}
	\label{oss_max_n_n2}
	Let $\rn'\sub \rn\subset \C$ be two proper, convex subsets of $\C$. An operator $T$ on $\H$, with numerical range in $\rn'$ and $\rn$-maximal is also $\rn'$-maximal.	The converse is not true in general. Indeed, consider $\rn':=\{\lambda\in \C:\Re \lambda =0\}$ contained in  $\rn:=\{\lambda \in \C: \Re \lambda \geq 0\}$. 	Let $T$ be a maximal symmetric operator, but not self-adjoint. We can assume that, in particular, $R(T-iI)\neq \H$.	Thus $\mathcal{T}:=iT$ is densely defined, has numerical range in $\rn'$ (hence it is in particular accretive), it is $\rn '$-maximal and one has $R(\mathcal{T}+I)\neq\H$. By \hyperref[car_accr]{Theorem} \ref{car_accr}, $\mathcal{T}$ is not $\rn$-maximal.
\end{osr}

\section{Case 1: general closed convex subset}

Our goal in the ensuing sections is to extend \hyperref[car_accr]{Theorem} \ref{car_accr} to $\rn$-maximal operators, where $\rn$ is a proper convex subset of $\C$. 

\begin{pro}
	\label{pro_rn_max_dens}
	Let $T$ be an  operator on $\H$ with numerical range contained in $\rn$ and $\lambda \in \rn^c$. If $R(T-\lambda I)=\H$, then $T$ is densely defined, closed and $\rn$-maximal.
\end{pro}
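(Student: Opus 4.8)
The plan is to extract everything from a single lower bound on $T-\lambda I$ coming from the numerical range. Set $d:=\text{dist}(\lambda,\rn_T)$; since $\lambda\in\rn^c$, $\rn_T\sub\rn$, and (in this section) $\rn$ is closed, we have $d\geq \text{dist}(\lambda,\rn)>0$. For $\xi\in D(T)$ with $\n{\xi}=1$ the Cauchy--Schwarz inequality gives
$$
\n{(T-\lambda I)\xi}\geq |\pin{(T-\lambda I)\xi}{\xi}|=|\pin{T\xi}{\xi}-\lambda|\geq d,
$$
so $\n{(T-\lambda I)\xi}\geq d\n{\xi}$ for every $\xi\in D(T)$. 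In particular $T-\lambda I$ is injective, and together with the hypothesis $R(T-\lambda I)=\H$ this makes $(T-\lambda I)^{-1}$ an everywhere defined operator on $\H$ with $\n{(T-\lambda I)^{-1}}\leq d^{-1}$ (cf. \hyperref[lem_intro]{Lemma} \ref{lem_intro}(5)), hence bounded, i.e. $\lambda\in\rho(T)$. Then $T$ is closed: the graph of $T-\lambda I$ equals $\{((T-\lambda I)^{-1}\chi,\chi):\chi\in\H\}$, which is closed because $(T-\lambda I)^{-1}\in\B(\H)$, and $T$ is closed iff $T-\lambda I$ is.

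To see that $T$ is densely defined I would take $\eta\in D(T)^\perp$ and use $R(T-\lambda I)=\H$ to pick $\xi\in D(T)$ with $(T-\lambda I)\xi=\eta$. Pairing with $\xi\in D(T)$ and using $\eta\perp D(T)$ yields $\pin{T\xi}{\xi}-\lambda\n{\xi}^2=\pin{\eta}{\xi}=0$; if $\xi\neq0$ this would force $\pin{T\xi}{\xi}/\n{\xi}^2=\lambda\in\rn_T\sub\rn$, contradicting $\lambda\in\rn^c$. Hence $\xi=0$ and so $\eta=(T-\lambda I)\xi=0$, i.e. $D(T)^\perp=\{0\}$.

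For $\rn$-maximality, condition (1) of \hyperref[def_n_max]{Definition} \ref{def_n_max} is the assumption $\rn_T\sub\rn$. For condition (2), let $T'$ be an operator with $T\sub T'$ and $\rn_{T'}\sub\rn$, and fix $\xi\in D(T')$. Since $T-\lambda I$ maps $D(T)$ onto $\H$, choose $\zeta\in D(T)\sub D(T')$ with $(T-\lambda I)\zeta=(T'-\lambda I)\xi$; because $T\sub T'$ this gives $(T'-\lambda I)(\zeta-\xi)=0$. But $T'-\lambda I$ is injective: $(T'-\lambda I)v=0$ with $v\neq0$ would give $\pin{T'v}{v}=\lambda\n{v}^2$, hence $\lambda\in\rn_{T'}\sub\rn$, impossible. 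Therefore $\xi=\zeta\in D(T)$, so $D(T')=D(T)$ and $T'=T$.

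I expect the only nonroutine point to be the density argument, whose idea is to solve $(T-\lambda I)\xi=\eta$ and test against $\xi$; the rest is a direct consequence of the lower bound on $T-\lambda I$. Note that injectivity of $T-\lambda I$ and of $T'-\lambda I$ only uses $\lambda\notin\rn$, so the closedness hypothesis on $\rn$ enters solely to guarantee $d>0$, and hence only in the closedness of $T$.
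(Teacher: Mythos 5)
Your proof is correct and follows essentially the same route as the paper's: injectivity of $T-\lambda I$ and $T'-\lambda I$ from $\lambda\notin\rn$ gives maximality, and density is obtained by solving $(T-\lambda I)\xi=\eta$ for $\eta\in D(T)^\perp$ and testing against $\xi$. You are in fact slightly more careful than the paper, which asserts $\lambda\in\rho(T)$ directly from injectivity plus surjectivity, whereas you justify the boundedness of $(T-\lambda I)^{-1}$ (and hence closedness of $T$) via the lower bound $\n{(T-\lambda I)\xi}\geq d\n{\xi}$ coming from $\mathrm{dist}(\lambda,\rn)>0$.
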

\begin{proof}
	Since $\lambda \notin \rn$, $T-\lambda I$ is injective and therefore $\lambda\in \rho(T)$ and $T$ is closed. 
	Let $T'$ be an extension of $T$, with numerical range $\rn_{T'}\subseteq \rn$. Then also $T'-\lambda I$ is injective; hence $T'=T$ and $T$ is $\rn$-maximal.\\
	Now, we prove that $T$ is densely defined. Let $\eta\in \H$ be such that $\pin{\xi}{\eta}=0$ for all $\xi \in D(T)$. Since $\lambda \in \rho(T)$, we have in particular that
	$
	\pin{(T-\lambda I)^{-1}\eta}{\eta}=0
	$ 
	and setting $\chi=(T-\lambda I)^{-1}\eta$, we get
	$
	\pin{\chi}{(T-\lambda I)\chi}=0
	$.
	Hence $\chi=0$, because $\lambda \notin \rn$, and $\eta=0$.
\end{proof}

\begin{esm}
	\label{esm_op_molt_n_max}
	Let $\ull{\alpha}=\{\alpha_n\}$ be a sequence of complex numbers contained in a closed convex set in $\rn$, and $\ll_2$ be the Hilbert space of the complex sequences $\{\xi_n\}$ such that $\sum_{n=1}^{\infty} |\xi_n|^2 <\infty$, with the usual inner product.\\
	Consider the operator $\Ma$ on $l_2$ with domain 
	$$
	D(\Ma)=\left \{ \{\xi_n\}\in \ll_2: \sum_{n=1}^{\infty } |\alpha_n \xi_n|^2<\infty \right\}
	$$
	and given by $\Ma \{\xi_n\}=\{\alpha_n \xi_n\}$, for all $\{\xi_n\}\in D(\Ma).$\\
	The operator $\Ma$ has numerical range in $\rn$; moreover if $\lambda \in \rn^c$, then $\lambda \in \rho(\Ma)$,  hence $R(\Ma-\lambda I)={\it l}_2$. By \hyperref[pro_rn_max_dens]{Proposition} \ref{pro_rn_max_dens}, $\Ma$ is $\rn$-maximal.
\end{esm}

\begin{pro}
	\label{pro_rn_max_chius}
	If $\rn$ is closed, then a densely defined, $\rn$-maximal operator on $\H$ is closed.  
\end{pro}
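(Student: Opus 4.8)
The plan is to realize the closure $\ol{T}$ of $T$ as an admissible competitor in part (2) of Definition \ref{def_n_max}, which then forces $T=\ol{T}$.

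First I would invoke the closability of $T$, a fact already recalled in the Introduction. Since $\rn$ is a nonempty proper convex subset of $\C$, its closure $\ol{\rn}$ is again proper: for a convex set the interior of the closure is contained in the set itself, so $\ol{\rn}=\C$ would force $\rn=\C$. Hence $\rn_T\sub\rn$ is not dense in $\C$, and a densely defined operator with non-dense numerical range is closable. (Concretely, separating a point of $\ol{\rn}^c$ from $\ol{\rn}$ gives $\beta\in\C$ with $|\beta|=1$ and $c\in\R$ such that $\Re\,\beta z\geq c$ for every $z\in\rn$; then $\beta T-cI$ is densely defined and accretive, hence closable by the usual direct estimate, and so is $T$.) Thus $\ol{T}$ is a well-defined linear operator with $T\sub\ol{T}$.

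Next I would check that $\rn_{\ol{T}}\sub\rn$. Let $\xi\in D(\ol{T})$ with $\n{\xi}=1$ and choose $\xi_n\in D(T)$ with $\xi_n\to\xi$ and $T\xi_n\to\ol{T}\xi$; for $n$ large $\xi_n\neq 0$, so $\pin{T\xi_n}{\xi_n}\n{\xi_n}^{-2}\in\rn_T\sub\rn$, and passing to the limit gives $\pin{\ol{T}\xi}{\xi}\in\ol{\rn}$. Here the hypothesis that $\rn$ is closed enters: $\ol{\rn}=\rn$, so $\pin{\ol{T}\xi}{\xi}\in\rn$, which proves $\rn_{\ol{T}}\sub\rn$. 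Consequently $\ol{T}$ is an operator extending $T$ whose numerical range lies in $\rn$, and condition (2) of Definition \ref{def_n_max} yields $T=\ol{T}$, i.e. $T$ is closed.

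I do not anticipate a real obstacle: the only nontrivial ingredient is the closability of $T$, which is standard (and is already used implicitly in the Introduction), while the closedness of $\rn$ is needed solely to identify $\ol{\rn}$ with $\rn$ in the last step.
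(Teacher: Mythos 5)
Your argument is correct and follows the same route as the paper: establish closability of $T$, observe that $\rn_{\ol{T}}\sub\ol{\rn_T}\sub\ol{\rn}=\rn$, and invoke maximality to conclude $T=\ol{T}$. The only difference is that you prove the two auxiliary facts (closability via reduction to the accretive case, and the inclusion $\rn_{\ol{T}}\sub\ol{\rn_T}$ by a limit argument) directly, whereas the paper cites \cite[Ch.\ V, Th.\ 3.4]{Kato} for them; your details are sound.
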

\begin{proof}
	Let $T$ be a densely defined, $\rn$-maximal operator on $\H$. $T$ is closable by \cite[Ch. V, Th. 3.4]{Kato}, and it has a closure $\ol{T}$ with numerical range $\rn_{\ol{T}}\sub  \ol{\rn_T}\sub \ol{\rn}=\rn$. Therefore, by the maximality of $T$ and from $T\sub \ol{T}$, we have $T=\ol{T}$.
\end{proof}

\no	Resuming the results obtained, and using %Lemma $\ref{cor_comp_con_ris}$
\hyperref[lem_intro]{Lemma} \ref{lem_intro}, we can formulate the following theorem.

\begin{teor}
	\label{th_car_parz_n_max}
	Let $\rn\sub \C$ be a proper, closed, convex subset of $\C$ and let $T$ be an operator on $\H$ with numerical range in $\rn$. \\
	For the following statements
	\begin{enumerate}
		\item $R(T-\lambda I)=\H$ for some $\lambda\in \rn^c$;
		\item a connected component of $\rn^c$ is contained in the resolvent set $\rho(T)$ of $T$ (i.e. a defect index of $T$ is $0$);
		\item $T$ is $\rn$-maximal and densely defined;
		\item $T$ is $\rn$-maximal and closed;
	\end{enumerate}
	the following implications hold ${ 1.\Rightarrow 2.\Rightarrow 3.\Rightarrow 4.}$
\end{teor}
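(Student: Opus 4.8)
The plan is to prove the three implications in order, using the propositions already established and Lemma~\ref{lem_intro}.

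For $1.\Rightarrow 2.$: suppose $R(T-\lambda I)=\H$ for some $\lambda\in\rn^c$. Since $\lambda\in\rn^c$ and $\rn_T\sub\rn$, the operator $T-\lambda I$ is bounded below (indeed $\|(T-\lambda I)\xi\|\geq \mathrm{dist}(\lambda,\rn)\|\xi\|$, by the standard numerical-range estimate), so $T-\lambda I$ is injective with bounded inverse, and combined with surjectivity this gives $\lambda\in\rho(T)$. Now I invoke part~3 of Lemma~\ref{lem_intro}: the connected component $C$ of $\ol{\rn_T}^c$ containing $\lambda$ lies entirely in $\rho(T)$ or entirely in $\sigma_r(T)$; since $\lambda\in\rho(T)$, the whole component $C$ is in $\rho(T)$. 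The only gap to bridge is that the theorem asks for a connected component of $\rn^c$, not of $\ol{\rn_T}^c$. Because $\rn_T\sub\rn$ and $\rn$ is closed, we have $\ol{\rn_T}\sub\rn$, hence $\rn^c\sub\ol{\rn_T}^c$; the component of $\rn^c$ containing $\lambda$ is therefore contained in the component $C$ of $\ol{\rn_T}^c$ containing $\lambda$, and so it lies in $\rho(T)$. The parenthetical ``a defect index of $T$ is $0$'' follows since $R(T-\lambda I)=\H$ means $R(T-\lambda I)^\perp=\{0\}$ and this defect number is constant on the component.

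For $2.\Rightarrow 3.$: if a connected component of $\rn^c$ is contained in $\rho(T)$, pick any $\lambda$ in that component; then $\lambda\in\rho(T)$ forces $R(T-\lambda I)=\H$, and also $\lambda\in\rn^c\supseteq$ (that component), so $\lambda\in\rn^c$. Now Proposition~\ref{pro_rn_max_dens} applies directly with this $\lambda$ and yields that $T$ is densely defined, closed and $\rn$-maximal; in particular $T$ is $\rn$-maximal and densely defined. (Note this actually already gives statement~4 as well, which makes the last implication immediate.)

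For $3.\Rightarrow 4.$: this is precisely Proposition~\ref{pro_rn_max_chius}, which states that when $\rn$ is closed a densely defined $\rn$-maximal operator is closed; combined with the hypothesis that $T$ is $\rn$-maximal, we conclude $T$ is $\rn$-maximal and closed. I do not expect any serious obstacle here: the only point requiring a little care is the passage between components of $\rn^c$ and of $\ol{\rn_T}^c$ in the first implication, which is handled by the closedness of $\rn$ together with $\rn_T\sub\rn$. Everything else is a direct citation of the two preceding propositions and of Lemma~\ref{lem_intro}(3).
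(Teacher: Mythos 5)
Your proof is correct and follows exactly the route the paper intends: the paper states this theorem as a summary of the preceding results (Lemma \ref{lem_intro}(3) for $1.\Rightarrow 2.$, Proposition \ref{pro_rn_max_dens} for $2.\Rightarrow 3.$, Proposition \ref{pro_rn_max_chius} for $3.\Rightarrow 4.$) without writing out details, and you have supplied those details accurately, including the needed observation that closedness of $\rn$ gives $\ol{\rn_T}\sub\rn$ and hence $\rn^c\sub\ol{\rn_T}^c$.
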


\no	The statements indicated in the previous theorem are equivalent in the case  $\rn=\{\lambda\in \C:\Re\lambda \geq 0\}$ (\hyperref[car_accr]{Theorem} \ref{car_accr}), but they are not equivalent in general (see \hyperref[count_exm_n_max_clos]{Example} \ref{count_exm_n_max_clos} below).
If $T$ is bounded, then the statements in \hyperref[th_car_parz_n_max]{Theorem} \ref{th_car_parz_n_max} are equivalent and moreover they hold if and only if $T\in \B(\H)$. 

\section{Case 2: closed convex subset of a sector}
\label{sec:sect}

In this section we study the case in which $\rn$ is a closed subset of a sector 
$$\Set:=\{\lambda\in \C:|\arg(\lambda-\gamma)|\leq \theta\}$$
where $\gamma\in \R,\theta\in [0,\frac{\pi}{2})$, and we add other implications to \hyperref[th_car_parz_n_max]{Theorem} \ref{th_car_parz_n_max}. \\
We recall that an operator $T$ with numerical range $\rn_T\subseteq \Set$ is said to be {\it sectorial}; moreover, if the complement of $\Set$ is contained in the resolvent set of $T$, then $T$ is said {\it m-sectorial} (see \cite[Ch. V]{Kato}).

\begin{teor}
	\label{th_car_n_max_set}
	Let $\rn \subset \C$ be a closed, convex subset contained in a sector of $\C$, and let $T$ be an operator on $\H$ with numerical range $\rn_T\subseteq \rn$. The following statements are equivalent:
	\begin{enumerate}
		\item $T$ is $\rn$-maximal and densely defined;
		\item $R(T-\lambda I)=\H$ for some $\lambda\in {\rn}^c$;
		\item $\rn^c$ is contained in the resolvent set $\rho(T)$ of $T$ (i.e., the defect index of $T$ is $0$).
	\end{enumerate}
	If these conditions are satisfied, then $T$ is closed.
\end{teor}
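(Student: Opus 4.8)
The plan is to combine Theorem \ref{th_car_parz_n_max} with the Friedrichs extension of a sectorial operator to close the loop of implications. Theorem \ref{th_car_parz_n_max} already gives $2.\Rightarrow 3.$ and (since here $\rn^c$ is connected, as $\rn$ lies in a sector and hence $\ol{\rn}=\rn$ is not a strip) $3.\Rightarrow$ ``$T$ is $\rn$-maximal, densely defined and closed''. So the only genuinely new content is the implication $1.\Rightarrow 2.$ (equivalently $1.\Rightarrow 3.$), together with the final assertion that $1.$ forces closedness — and the latter is immediate from Proposition \ref{pro_rn_max_chius} once we know $\rn$ is closed, which it is by hypothesis. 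Thus everything reduces to: a densely defined $\rn$-maximal operator $T$ with $\rn_T\subseteq\rn\subseteq\Set$ satisfies $R(T-\lambda I)=\H$ for some, hence all, $\lambda\in\rn^c$.

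First I would reduce to a normalized situation: by translating (replacing $T$ by $T-\gamma I$) we may assume the sector $\Set$ has vertex at the origin, so $T$ is sectorial in Kato's sense and in particular accretive after a further rotation if needed; more simply, $\Re\pin{T\xi}{\xi}\ge c\n{\xi}^2$ fails in general but $T+\beta I$ is sectorial with vertex $0$ for suitable real $\beta$, and it suffices to treat that operator. Now the key step: since $T$ is densely defined and sectorial, it admits a \emph{Friedrichs extension} $T_F$, which is m-sectorial with numerical range in the same sector $\Set$ (this is the classical construction via the closure of the form $\xi\mapsto\pin{T\xi}{\xi}$, cf. \cite[Ch.~VI]{Kato}). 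However, $T_F$ need not have numerical range in $\rn$ itself — only in $\Set$ — so one cannot directly invoke $\rn$-maximality against $T_F$. The fix is to intersect: one shows that the restriction of $T_F$ obtained by cutting down the form domain appropriately, or rather a suitable extension of $T$ inside $\rn$, exists and is m-sectorial with numerical range in $\rn$. Concretely, I would argue that among all extensions of $T$ with numerical range in $\rn$ there is one, say $\widehat T$, that is still sectorial with vertex in the interior structure of $\rn$, apply the Friedrichs construction to it, and check the resulting form-associated operator still has numerical range in $\rn$ because $\rn$ is closed and convex and the form values lie in the closed convex hull of $\rn_{\widehat T}\subseteq\rn$.

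The main obstacle is precisely this point: guaranteeing that the Friedrichs-type extension stays inside $\rn$ rather than merely inside the ambient sector $\Set$. The resolution should use that $\rn$ is closed and convex, so that the sesquilinear form $t[\xi,\eta]$ associated to $T$ (or to a maximal-in-$\rn$ sectorial extension), closed in the form sense, has all its diagonal values $t[\xi,\xi]$ in $\ol{\{\pin{T\eta}{\eta}:\eta\in D(T)\}}\subseteq\rn$ by a limiting argument along the form-convergence, hence the associated operator $T_F$ satisfies $\rn_{T_F}\subseteq\rn$. Then $\rn$-maximality of $T$ gives $T=T_F$, so $T$ is m-sectorial, i.e., $\rn^c\subseteq\Set^c\subseteq\rho(T)$ — wait, more precisely $\Set^c\subseteq\rho(T_F)=\rho(T)$ and since $\rn^c\supseteq\Set^c$ is not automatic, one instead uses that $T=T_F$ is m-sectorial with respect to \emph{every} sector containing $\rn$, and by taking a decreasing family of such sectors (or directly the convex geometry of $\rn$) concludes $\rn^c\subseteq\rho(T)$; equivalently one checks $T-\lambda I$ is bijective for each $\lambda\in\rn^c$ by the numerical-range bound in Lemma \ref{lem_intro}(5) applied to the closed operator $T$ together with surjectivity coming from the form representation. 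Finally, $2.\Rightarrow 1.$ is Theorem \ref{th_car_parz_n_max} ($1.\Rightarrow 2.\Rightarrow 3.$ there, read in the direction we need), and the closedness of $T$ under the equivalent conditions follows from Proposition \ref{pro_rn_max_chius}. I expect the write-up of the ``stays inside $\rn$'' lemma to be the part requiring the most care.
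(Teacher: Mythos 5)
Your proof is essentially the paper's: reduce everything to $1.\Rightarrow 2.$ via Theorem \ref{th_car_parz_n_max}, take the Friedrichs extension $T'$ of the densely defined sectorial operator $T$, observe that $\rn_{T'}\subseteq\overline{\rn_T}\subseteq\rn$ (Kato, Ch.~VI, using that $\rn$ is closed), and conclude from $\rn$-maximality that $T=T'$ is m-sectorial. The ``main obstacle'' you describe is not actually there --- the cited Kato fact already confines the numerical range of the Friedrichs extension to $\overline{\rn_T}$, not merely to the ambient sector, so no cutting down of the form domain is needed --- and your closing worry is backwards: $\rn\subseteq\Set$ gives $\Set^c\subseteq\rn^c$ automatically, so m-sectoriality immediately produces some $\lambda\in\rn^c$ with $R(T-\lambda I)=\H$, after which Theorem \ref{th_car_parz_n_max} and the connectedness of $\rn^c$ (a subset of a sector is never a strip) yield statement 3.
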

\begin{proof}
	By \hyperref[th_car_parz_n_max]{Theorem} \ref{th_car_parz_n_max}, we only have to prove that if $T$ is $\rn$-maximal and densely defined, then $R(T-\lambda I)=\H$ for some $\lambda\in {\rn}^c$.\\
	Let $T'$ be the (m-sectorial) Friedrichs extension of $T$. The numerical range $\rn_{T'}$ of $T'$ is contained in $\overline{\rn_T}\subseteq \rn$ (see \cite[Ch. VI]{Kato}). 
	Hence, from the $\rn$-maximality of $T$ and $T\subseteq T'$, we have $T=T'$. It follows that $T$ is m-sectorial, i.e., $R(T-\lambda I)=\H$ for some $\lambda\in {\rn}^c$.
\end{proof}

\no	The following theorem demonstrates that the $\rn$-maximality of an operator (where $\rn$ is contained in a sector) does not strictly depend  on the chosen closed, convex subset $\rn$.

\begin{teor}
	\label{unic_est}
	Let $\rn_1,\rn_2 \subset \C$ be two proper, closed, convex subsets of $\C$, such that $\rn_1$ is contained in a sector of $\C$ and $\rn_1\cap \rn_2 \neq \varnothing$. Let $T$ be a densely defined operator on $\H$ with numerical range $\rn_T\subseteq \rn_1\cap \rn_2$. The following statements are equivalent:
	\begin{enumerate}
		\item $T$ is $\rn_1$-maximal;
		\item $T$ is $\rn_2$-maximal.
	\end{enumerate}
\end{teor}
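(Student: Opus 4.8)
The plan is to reduce the equivalence of $\rn_1$-maximality and $\rn_2$-maximality to Theorem~\ref{th_car_n_max_set}, exploiting the fact that $\rn_1$ is contained in a sector. First I would observe that the hypothesis $\rn_T\subseteq \rn_1\cap\rn_2$ means both notions of maximality are a priori meaningful for $T$, and that $\rn_1\cap\rn_2$ is again a proper, closed, convex set contained in a sector (since it is contained in $\rn_1$).

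The key point is that for a densely defined operator $T$ with numerical range in a set contained in a sector, Theorem~\ref{th_car_n_max_set} gives that $T$ is $\rn_i$-maximal (for $i$ such that $\rn_i$ lies in a sector) if and only if $\rn_i^c\subseteq\rho(T)$, which in turn (by part (5) of Lemma~\ref{lem_intro}, or more simply by the injectivity of $T-\lambda I$ for $\lambda\notin\rn_T$) is equivalent to $\ol{\rn_T}^c\subseteq\rho(T)$ — i.e. the defect index of $T$ computed on the (single, since $\rn_T$ is not a strip when it sits in a sector with $\theta<\pi/2$) connected component of $\ol{\rn_T}^c$ is zero. Thus the abstract property ``defect index of $T$ is $0$'' does not depend on which sectorial $\rn_i\supseteq\rn_T$ we choose. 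So I would argue as follows. Suppose $T$ is $\rn_1$-maximal. By Theorem~\ref{th_car_n_max_set} applied with $\rn=\rn_1$, we get $R(T-\lambda I)=\H$ for some $\lambda\in\rn_1^c$, and in fact $\ol{\rn_T}^c\subseteq\rho(T)$; since $\rn_1\cap\rn_2\supseteq\rn_T$ and any $\lambda\in(\rn_1\cap\rn_2)^c$ with $\lambda\in\rho(T)$ exists (pick any $\lambda\in\ol{\rn_T}^c$ outside $\rn_2$, which is possible because $\rn_2$ is proper and $\ol{\rn_T}^c$ is open and unbounded), we have $R(T-\lambda I)=\H$ for some $\lambda\in(\rn_1\cap\rn_2)^c\subseteq\rn_1^c$. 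Then Theorem~\ref{th_car_parz_n_max} (implications $1.\Rightarrow 2.\Rightarrow 3.$ in the form: if $R(T-\lambda I)=\H$ for $\lambda\in\rn^c$ then $T$ is $\rn$-maximal) applied with $\rn=\rn_2$ — or directly Proposition~\ref{pro_rn_max_dens} — yields that $T$ is $\rn_2$-maximal. By symmetry of the roles of $\rn_1$ and $\rn_2$ — wait, here one must be careful: $\rn_2$ need not lie in a sector, so to go back one cannot invoke Theorem~\ref{th_car_n_max_set} with $\rn=\rn_2$. But one can still run the same argument starting from $\rn_2$-maximality: it gives, via Theorem~\ref{th_car_parz_n_max}, no direct surjectivity statement; instead I would note that $\rn_2$-maximality of $T$ together with $\rn_T\subseteq\rn_1\cap\rn_2$ implies $T$ is $(\rn_1\cap\rn_2)$-maximal (Remark~\ref{oss_max_n_n2}, since $\rn_1\cap\rn_2\subseteq\rn_2$), and then, since $\rn_1\cap\rn_2$ is contained in a sector, Theorem~\ref{th_car_n_max_set} applies to $\rn=\rn_1\cap\rn_2$ and gives $R(T-\mu I)=\H$ for some $\mu\in(\rn_1\cap\rn_2)^c\subseteq\rn_1^c$; then Proposition~\ref{pro_rn_max_dens} with $\rn=\rn_1$ shows $T$ is $\rn_1$-maximal.

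Symmetrically, for the direction $\rn_1$-maximal $\Rightarrow$ $\rn_2$-maximal, the cleanest route is the same: $\rn_1$-maximality plus $\rn_T\subseteq\rn_1\cap\rn_2\subseteq\rn_1$ gives $(\rn_1\cap\rn_2)$-maximality by Remark~\ref{oss_max_n_n2}; since $\rn_1\cap\rn_2$ lies in a sector, Theorem~\ref{th_car_n_max_set} gives $R(T-\mu I)=\H$ for some $\mu\in(\rn_1\cap\rn_2)^c$; and $(\rn_1\cap\rn_2)^c\subseteq\rn_2^c$, so Proposition~\ref{pro_rn_max_dens} with $\rn=\rn_2$ yields $\rn_2$-maximality. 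So both implications factor through $(\rn_1\cap\rn_2)$-maximality, using Remark~\ref{oss_max_n_n2} to descend into the intersection and Theorem~\ref{th_car_n_max_set} plus Proposition~\ref{pro_rn_max_dens} to climb back up.

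The main obstacle I anticipate is the asymmetry of the hypotheses: only $\rn_1$ is assumed sectorial, so Theorem~\ref{th_car_n_max_set} cannot be invoked directly for $\rn_2$, and one must check carefully that $\rn_1\cap\rn_2$ inherits the sectorial property (it does, being a subset of $\rn_1$) and that it is nonempty (assumed) and proper (it is contained in the proper set $\rn_1$). A minor technical point to verify is that $(\rn_1\cap\rn_2)^c$ genuinely meets both $\rn_1^c$ and $\rn_2^c$ in the right way, i.e. that the $\lambda$ produced by Theorem~\ref{th_car_n_max_set} lies in $\rn_1^c$ and in $\rn_2^c$ simultaneously — but this is immediate since $(\rn_1\cap\rn_2)^c=\rn_1^c\cup\rn_2^c\supseteq$ neither automatically, so one should instead note $(\rn_1\cap\rn_2)^c\supseteq\ol{\rn_T}^c$ and, since $\ol{\rn_T}^c\subseteq\rho(T)$ once maximality in the sectorial set is established, pick $\lambda\in\ol{\rn_T}^c$ that additionally avoids $\rn_2$ (resp. $\rn_1$), which exists because $\rn_2$ (resp. $\rn_1$) is proper and closed while $\ol{\rn_T}^c$ is open and unbounded. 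With that observation the argument closes cleanly.
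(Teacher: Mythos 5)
Your proof is correct and follows essentially the same route as the paper: the author's own argument for $2.\Rightarrow 1.$ is exactly your ``descend to $\rn_1\cap\rn_2$ via Remark~\ref{oss_max_n_n2}, apply Theorem~\ref{th_car_n_max_set} there, climb back up via Proposition~\ref{pro_rn_max_dens}'' scheme, while for $1.\Rightarrow 2.$ the paper takes the small shortcut of applying Theorem~\ref{th_car_n_max_set} to $\rn_1$ directly and observing $\rn_1^c\cap\rn_2^c\neq\varnothing$. One slip to fix: the inclusions you write, $(\rn_1\cap\rn_2)^c\subseteq\rn_1^c$ and $(\rn_1\cap\rn_2)^c\subseteq\rn_2^c$, go the wrong way (the correct ones are $\rn_i^c\subseteq(\rn_1\cap\rn_2)^c$); the argument still closes because statement~3 of Theorem~\ref{th_car_n_max_set} puts the \emph{whole} of $(\rn_1\cap\rn_2)^c$ in $\rho(T)$, so you may choose $\mu$ in the nonempty subset $\rn_i^c$ and then invoke Proposition~\ref{pro_rn_max_dens}.
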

\begin{proof}
	$(1.\Rightarrow 2.)$ Since $\rn_1$ is contained in a sector, we have $\rn_1^c\cap \rn_2^c\neq \varnothing$.	By \hyperref[th_car_n_max_set]{Theorem} \ref{th_car_n_max_set}, $R(T-\lambda I)=\H$ for all $\lambda\in \rn_1^c$, hence $R(T-\lambda I)=\H$ per some $\lambda \in \rn_2^c$. Applying \hyperref[th_car_parz_n_max]{Theorem} \ref{th_car_parz_n_max}, $T$ is $\rn_2$-maximal.\\
	$(2.\Rightarrow 1.)$ By \hyperref[oss_max_n_n2]{Remark} \ref{oss_max_n_n2}, $T$ is $(\rn_1\cap \rn_2)$-maximal, and then $T$ is $\rn_1$-maximal using the first implication ($\rn_1\cap \rn_2$ is contained in a sector).
\end{proof}

\no \hyperref[oss_max_n_n2]{Remark} \ref{oss_max_n_n2} shows that \hyperref[unic_est]{Theorem} \ref{unic_est} does not hold without the hypothesis that $\rn_1$ is contained in a sector of $\C$. Another way to read \hyperref[unic_est]{Theorem} \ref{unic_est} is the next corollaries.

\begin{cor}
	Let $\rn \subset \C$ be a closed, convex subset contained in a sector of $\C$. A $\rn$-maximal, densely defined operator $T$ on $\H$ has no proper extension whose numerical range is a proper subset of $\C$.
\end{cor}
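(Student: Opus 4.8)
\no The plan is to reduce the statement to \hyperref[unic_est]{Theorem} \ref{unic_est}. Let $T'$ be an operator with $T\subseteq T'$ and numerical range $\rn_{T'}$ a proper subset of $\C$, and put $\rn_2:=\ol{\rn_{T'}}$. I would first check that $\rn_2$ is again a proper, closed, convex subset of $\C$. Closedness and convexity are automatic; the only point that deserves a word is properness: if a convex subset of $\C$ has closure equal to all of $\C$, then it is already $\C$, since through any boundary point of a proper convex set there is a supporting line, which would confine the closure to a closed half-plane. As $\rn_{T'}\neq\C$, we conclude $\rn_2\neq\C$. Also note that $\rn_T\neq\varnothing$ because $T$ is densely defined (so $D(T)\neq\{0\}$).

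\no Next I would set $\rn_1:=\rn$. Then $\rn_1$ and $\rn_2$ are proper, closed, convex subsets of $\C$, and $\rn_1$ is contained in a sector by hypothesis. Moreover $\rn_T\subseteq\rn_1$ (because $T$ is $\rn$-maximal), while $\rn_T\subseteq\rn_{T'}\subseteq\rn_2$ (because $T\subseteq T'$). Hence $\rn_T\subseteq\rn_1\cap\rn_2$, and this intersection is nonempty since $\rn_T\neq\varnothing$. Thus all the hypotheses of \hyperref[unic_est]{Theorem} \ref{unic_est} are in force, with $T$ densely defined and $\rn$-maximal, so that theorem yields that $T$ is $\rn_2$-maximal.

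\no Finally, from $T\subseteq T'$ together with $\rn_{T'}\subseteq\ol{\rn_{T'}}=\rn_2$, the $\rn_2$-maximality of $T$ forces $T=T'$. This shows that $T$ admits no proper extension whose numerical range is a proper subset of $\C$.

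\no I do not expect a real obstacle here: the argument is essentially an application of \hyperref[unic_est]{Theorem} \ref{unic_est} once one allows oneself to replace the (possibly non-closed) numerical range $\rn_{T'}$ of the competitor $T'$ by its closure. The only slightly delicate points are verifying that passing to the closure preserves properness and that $\rn_T$ is nonempty, both of which are settled quickly as above. As a variant one could instead invoke the closability of $T'$ (\cite[Ch. V, Th. 3.4]{Kato}, applicable since $T'$ is densely defined with numerical range not all of $\C$) and work with $\ol{T'}$, but dealing with $\ol{\rn_{T'}}$ directly is more economical.
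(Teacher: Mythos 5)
Your argument is correct and is exactly the reading the paper intends: the corollary is stated as a direct consequence of \hyperref[unic_est]{Theorem} \ref{unic_est}, applied with $\rn_1=\rn$ and $\rn_2=\ol{\rn_{T'}}$, and then invoking $\rn_2$-maximality to force $T=T'$. Your two verifications (that the closure of a proper convex subset of $\C$ is still proper, and that $\rn_T\neq\varnothing$ so $\rn_1\cap\rn_2\neq\varnothing$) are the right details to check and are handled correctly.
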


\begin{cor}
	Let $T$ be a densely defined, accretive operator and with numerical range also contained in a closed subset $\rn$ of a sector $\Set$ of $\C$. Then, $T$ is maximal accretive if and only if $T$ is $\rn$-maximal.
\end{cor}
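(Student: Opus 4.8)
The plan is to derive this corollary directly from Theorem~\ref{unic_est}. Put $\rn_1:=\rn$ and $\rn_2:=\{\lambda\in\C:\Re\lambda\geq 0\}$. First I would check that the pair $\rn_1,\rn_2$ meets the hypotheses of Theorem~\ref{unic_est}: both are proper, closed, convex subsets of $\C$ — this is clear for $\rn_2$, and for $\rn_1$ it holds because $\rn_1$ is by assumption contained in a sector $\Set$ of half-angle strictly less than $\frac{\pi}{2}$, which is itself proper (so $\rn_1$ is proper), closed and convex; $\rn_1$ is contained in a sector by hypothesis; and $\rn_1\cap\rn_2\neq\varnothing$ since it contains $\rn_T$, which is nonempty because $T$ is densely defined (the statement being vacuous if $\H=\{0\}$). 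Since $T$ is densely defined with $\rn_T\sub\rn_1\cap\rn_2$, Theorem~\ref{unic_est} then gives that $T$ is $\rn_1$-maximal if and only if $T$ is $\rn_2$-maximal.

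To finish I would only have to observe that being ``$\rn_2$-maximal'' coincides with being maximal accretive: $T$ is accretive precisely when $\rn_T\sub\rn_2$, and maximality of $T$ among accretive operators is exactly condition~(2) of Definition~\ref{def_n_max} for this $\rn_2$ (this is the content of the remark immediately following Definition~\ref{def_n_max}). Combining the two steps yields: $T$ is $\rn$-maximal $\iff$ $T$ is maximal accretive.

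Should one prefer not to invoke Theorem~\ref{unic_est} as a black box, the same reasoning can be unrolled: if $T$ is $\rn$-maximal and densely defined then $\rn^c\sub\rho(T)$ by Theorem~\ref{th_car_n_max_set}; picking $\lambda$ in the nonempty set $\rn^c\cap\rn_2^c$ gives $R(T-\lambda I)=\H$ for some $\lambda$ with $\Re\lambda<0$, so $T$ is maximal accretive by Phillips' Theorem~\ref{car_accr}. Conversely, if $T$ is maximal accretive and densely defined then $\{\Re\lambda<0\}\sub\rho(T)$ by Theorem~\ref{car_accr}, hence $R(T-\lambda I)=\H$ for some $\lambda\in\rn^c$, and the implications $1\Rightarrow 2\Rightarrow 3$ of Theorem~\ref{th_car_parz_n_max} make $T$ $\rn$-maximal. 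There is essentially no obstacle; the only point needing (minor) care is the verification that $\rn^c\cap\rn_2^c\neq\varnothing$, which holds because $\rn$ lies in a sector strictly narrower than a half-plane and therefore misses, for instance, all sufficiently negative real numbers — this is precisely the observation used in the proof of Theorem~\ref{unic_est}.
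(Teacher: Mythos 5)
Your proof is correct and follows exactly the route the paper intends: the corollary is stated as an immediate consequence of Theorem~\ref{unic_est} applied with $\rn_1=\rn$ and $\rn_2=\{\lambda\in\C:\Re\lambda\geq 0\}$, after identifying $\rn_2$-maximality with maximal accretivity. Your hypothesis checks (properness, closedness, convexity, $\rn_1\cap\rn_2\supseteq\rn_T\neq\varnothing$) and the unrolled alternative via Theorems~\ref{th_car_n_max_set}, \ref{car_accr} and \ref{th_car_parz_n_max} are all sound.
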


\no	The positive semi-line is contained in some sector of $\C$. For this reason, we turn our attention to the case in which $T$ is {\it positive}, i.e., $\rn_T\subseteq [0+\infty)$. We prefer to say that $T$ is {\it maximal positive} if it is $[0,+\infty)$-maximal. Before to show how \hyperref[th_car_parz_n_max]{Theorem} \ref{th_car_parz_n_max} is formulated in this case, we recall that 
a closed positive operator $T$ is said {\it positively closable} (see \cite{Ando-Nishio}) if 
$
\displaystyle \lim_{n\to \infty} \pin{T \xi_n}{\xi_n}=0$ and $\displaystyle \lim_{n\to \infty} T\xi_n=\eta
$
implies $\eta=0$. 

\begin{teor}
	\label{th_car_pos}
	Let $T$ be a positive operator on $\H$. The following statements are equivalent:
	\begin{enumerate}
		\item $T$ is maximal positive, closed and positively closable;
		\item $[0,+\infty)^c$ is contained in the resolvent set $\rho(T)$ of $T$ (i.e., the defect index of $T$ is $0$);
		\item $T$ is maximal positive and densely defined;
		\item $R(T-\lambda I)=\H$ for some $\lambda\in [0,+\infty)^c$.	
	\end{enumerate}
\end{teor}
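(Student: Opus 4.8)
The plan is to reduce everything to the sectorial case (Theorem \ref{th_car_n_max_set}) together with the partial chain of implications in Theorem \ref{th_car_parz_n_max}, and to handle the "positively closable" clause separately via the Friedrichs/Krein–von Neumann machinery for positive operators. Since $[0,+\infty)$ is a closed convex set contained in any sector $\Set = \{\lambda : |\arg(\lambda-\gamma)|\le\theta\}$ with $\gamma<0$ (and in fact in $\{\lambda:\Re\lambda\ge 0\}$, but sectoriality is the useful feature), the equivalence $2.\Leftrightarrow 3.\Leftrightarrow 4.$ is immediate: these are exactly statements $3.$, $2.$, $1.$ of Theorem \ref{th_car_n_max_set} applied with $\rn = [0,+\infty)$, and the fact that $\rn^c$ is connected here means "some $\lambda\in\rn^c$" and "all of $\rn^c$" coincide. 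So the only real content is to show $1.\Leftrightarrow$ (any one of $2.,3.,4.$).

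First I would prove $3.\Rightarrow 1.$ Assume $T$ is maximal positive and densely defined. By Theorem \ref{th_car_n_max_set} (or already by Proposition \ref{pro_rn_max_chius}, since $[0,+\infty)$ is closed), $T$ is closed, so the first two clauses of $1.$ hold. It remains to show $T$ is positively closable, i.e., that $\pin{T\xi_n}{\xi_n}\to 0$ and $T\xi_n\to\eta$ force $\eta=0$. Here I would use the resolvent bound from statement $2.$ (which we already have): for $\lambda<0$, $\n{(T-\lambda I)^{-1}}\le \mathrm{dist}(\lambda,[0,+\infty))^{-1} = |\lambda|^{-1}$ by Lemma \ref{lem_intro}(5). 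Writing $\xi_n = (T-\lambda I)^{-1}(T\xi_n - \lambda\xi_n)$ and examining $\pin{(T-\lambda I)\xi_n}{\xi_n} = \pin{T\xi_n}{\xi_n} - \lambda\n{\xi_n}^2$, one extracts that along the sequence $\n{\xi_n}^2$ stays controlled; then a Cauchy–Schwarz estimate $|\pin{\eta}{\zeta}| = \lim |\pin{T\xi_n}{\zeta}|$ tested against $\zeta\in D(T)$, combined with $\pin{T\xi_n}{\xi_n}\to 0$, should yield $\eta=0$. (I would actually run this the cleaner way: positive closability of a closed positive operator is equivalent to a defect-index/Friedrichs statement in \cite{Ando-Nishio}, and the existence of an m-positive — hence positively closable — extension that coincides with $T$ by maximality does the job directly.)

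For the converse direction, the natural route is $1.\Rightarrow 4.$ (or $1.\Rightarrow 2.$). Assume $T$ is maximal positive, closed and positively closable. The key tool is the Friedrichs extension $T_F$ of $T$ (available since $T$ is positive, hence sectorial): $T_F$ is a positive self-adjoint extension with $\rn_{T_F}\subseteq\ol{\rn_T}\subseteq[0,+\infty)$, so by maximality $T = T_F$, whence $T$ is self-adjoint and positive and therefore $(-\infty,0)\subseteq\rho(T)$, giving $4.$ — \emph{provided} the Friedrichs extension exists as an operator extension of $T$, which requires only that $T$ be densely defined. This is the gap: clause $1.$ does not assume $T$ densely defined. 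Here "positively closable" is exactly the hypothesis that repairs this — I expect the main obstacle to be showing that a closed, positively closable, maximal positive operator is automatically densely defined. The argument I have in mind mimics the density proof in Proposition \ref{pro_rn_max_dens}: if $\eta\perp D(T)$, build from $\eta$ a one-dimensional symmetric extension of $T$ (adding a vector in the "defect" direction with an appropriate real action) whose numerical range still lies in $[0,+\infty)$ — positive closability is what guarantees such an extension can be arranged to stay positive rather than spilling below $0$ — contradicting maximality unless $\eta=0$. Once density is secured, the Friedrichs argument above closes the loop, and then $4.\Rightarrow 2.\Rightarrow 3.$ follows from Theorem \ref{th_car_parz_n_max}, completing the cycle. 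The delicate point throughout is that, unlike in the sectorial interior case, $0$ is a boundary point of $[0,+\infty)$ approached tangentially by the numerical range, so one must be careful that extensions constructed in the defect direction do not acquire negative numerical range; this is precisely where positive closability (and not mere closability) is indispensable.
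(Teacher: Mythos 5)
Your reduction of $2.\Leftrightarrow 3.\Leftrightarrow 4.$ to Theorem \ref{th_car_n_max_set} is correct, and $3.\Rightarrow 1.$ also goes through: once $3.$ yields $2.$, the operator $T$ is positive and self-adjoint, and a positive self-adjoint operator is automatically positively closable (if $\pin{T\xi_n}{\xi_n}=\n{T^{1/2}\xi_n}^2\to 0$ and $T\xi_n=T^{1/2}(T^{1/2}\xi_n)\to\eta$, closedness of $T^{1/2}$ forces $\eta=0$); your first, resolvent-based sketch of this step is vaguer than necessary, but the fallback via \cite{Ando-Nishio} that you mention is fine.

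The genuine gap is in $1.\Rightarrow 4.$ You correctly identify that clause $1.$ does not assume density, but the repair you propose --- a one-dimensional symmetric extension of $T$ in the direction of a unit vector $\eta\perp D(T)$ --- would fail in general. To extend $T$ to $D(T)+\C\eta$ by setting $T'\eta=\zeta$ while keeping the numerical range real, you need $\pin{\zeta}{\xi}=\pin{\eta}{T\xi}$ for all $\xi\in D(T)$, i.e.\ the functional $\xi\mapsto\pin{\eta}{T\xi}$ must be bounded on $D(T)$ in the norm of $\H$; nothing in the hypotheses guarantees this, and positive closability does not obviously produce such a $\zeta$. In general the positive extension whose existence is equivalent to positive closability enlarges the domain by an infinite-dimensional subspace, and constructing it is precisely the content of Theorem 1 of \cite{Ando-Nishio}, which is stated for closed positive operators that are \emph{not} assumed densely defined: such an operator admits a positive self-adjoint extension in $\H$ if and only if it is positively closable. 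This is exactly how the paper argues: under $1.$, Ando--Nishio yields a positive self-adjoint extension $\tilde{T}\supseteq T$; since $\rn_{\tilde{T}}\subseteq[0,+\infty)$, maximality forces $T=\tilde{T}$, so $T$ is positive and self-adjoint, which gives density and $[0,+\infty)^c\subseteq\rho(T)$ in one stroke. There is no need --- and no elementary route --- to establish density first and only then invoke the Friedrichs extension.
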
	
\begin{proof}
	Suppose that $T$ is maximal positive, closed and positively closable. By \cite[Theorem 1]{Ando-Nishio}, $T$ admits a positive self-adjoint extension, that must concides with $T$; hence $[0,+\infty)^c$ is contained in the resolvent set $\rho(T)$ of $T$.	The other implications follow by \hyperref[th_car_parz_n_max]{Theorem} \ref{th_car_parz_n_max}.
\end{proof}

\no	The next example shows that \hyperref[th_car_pos]{Theorem} \ref{th_car_pos} does not hold without the hypothesis that $T$ is positively closable. That is the statements in \hyperref[th_car_parz_n_max]{Theorem} \ref{th_car_parz_n_max} are not equivalent in general.

\begin{esm}
	\label{count_exm_n_max_clos}
	Let $\H=\C^2$ and $T$ be the operator on $\C^2$ with domain 
	$D(T)=\{(x,0):x \in \C\}$ 
	and defined by $T(x,0)=(0,x)$ for all $x\in \C$. 
	We have that $T$  is positive, closed and non densely defined, $R(T-\lambda I)\neq \C^2$ for all $\lambda\in [0,+\infty)^c $. Moreover, $T$  is not positively closable, then by \cite[Theorem 1]{Ando-Nishio} is maximal positive.
\end{esm}

\section{Case 3: closed strip}	
\label{sec:strip}	

Now we study the case where the set $\rn$ of \hyperref[th_car_parz_n_max]{Theorem} \ref{th_car_parz_n_max}  is a strip. More precisely, we consider the following subsets of $\C$:
\begin{enumerate}
	\item for $\alpha \geq 0$, the {\it horizontal strip} $\St_\alpha$, i.e., a subset   such that 
	$$\{\lambda \in \C:|\Im \lambda|< \alpha\}\sub \St_\alpha\sub \{\lambda \in \C:|\Im \lambda|\leq \alpha\};$$ 
	\item the {\it horizontal closed strip} $\ol{\St_\alpha}$, i.e., a subset $\ol{\St_\alpha}:=\{\lambda \in \C:|\Im \lambda|\leq \alpha\}$, where $\alpha \geq 0$;
	\item the {\it strip} $\St$, i.e., a subset $\St:=a\St_\alpha+b$, where $a\in \C\backslash\{0\}, b\in \R,\alpha \geq 0$.
	\item the {\it closed strip} $\ol{\St}$, i.e., a subset $\ol{\St}:=a\ol{\St_\alpha}+b$, where $a\in \C\backslash\{0\}, b\in \R,\alpha \geq 0$.
\end{enumerate}

\no	We recall some notions regarding sesquilinear forms (see \cite[Ch. VI]{Kato}), that are useful in this section, but also in the last one.\\	
Let $\D$ be a subspace of the Hilbert space $\H$ and let $\O$ be a sesquilinear form defined on $\D$. The {\it adjoint} $\O^*$ of $\O$ is the form on $\D$ given by
$$
\O^*(\xi,\eta)=\ol{\O(\eta,\xi)}, \qquad \forall \xi,\eta \in \D.
$$
If $\O=\O^*$ then $\O$ is said to be {\it symmetric} . The symmetric sesquilinear forms on $\D$ defined by
$$
\Re \O=\frac{1}{2}(\O+\O^*) \qquad \text{and} \qquad \Im \O=\frac{1}{2i}(\O-\O^*),
$$
are called {\it real} and {\it imaginary parts} of $\O$, respectively; then $\O=\Re\O+i\Im\O$. \\
The {\it numerical range} is defined also for a sesquilinear form $\O$ and it is the convex subset  $$
\rn_\O:=\{\O(\xi,\xi):\xi\in \D, \n{\xi}=1\}
$$ of $\C$. Note that $\O$ is bounded if and only if $\rn_\O$ is bounded; $\O$ is symmetric if and only if $\rn_\O\subseteq \R$. If $\O$ is bounded and $\D=\H$, then there exists a unique operator $B\in \B(\H)$ such that 
$\O(\xi,\eta)=\pin{B\xi}{\eta}$,  for all $\xi, \eta \in \H$.\\
In order to prove \hyperref[th_car_strip]{Theorem} \ref{th_car_strip} we firstly need the next lemma. The idea of the proof is analogous to the argument used to prove Theorem 7.1.2 of \cite{Haase_b}.

\begin{lem}
	\label{lem_decomp_strip}
	Let $\St_\alpha$ be a horizontal strip of $\C$ and $T$ be a densely defined operator with numerical range $\rn_T \subseteq \St_\alpha$. Then there exist unique symmetric operators $B\in \B(\H)$ and $S$ such that $D(S)=D(T)$ and
	\begin{equation}
	\label{T_strip}
	T=S+iB.
	\end{equation}
	Moreover,
	\begin{enumerate}
		\item $D(T)\subseteq D(T^*)$;
		\item $S=\frac{1}{2} (T+T^*)$ and $B_{|D(T)}=\frac{1}{2i} (T-T^*)$.
	\end{enumerate}
\end{lem}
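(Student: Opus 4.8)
The plan is to build $B$ first from the imaginary part of the form naturally associated to $T$, and then define $S := T - iB$ and verify it is symmetric with the claimed domain. First I would introduce the sesquilinear form $\tau(\xi,\eta) := \pin{T\xi}{\eta}$ on $D(T)$, whose numerical range is $\rn_T \subseteq \St_\alpha$, so after subtracting the horizontal center line we may as well assume $\St_\alpha = \{\lambda : |\Im\lambda| \le \alpha\}$ and hence $|\Im \tau(\xi,\xi)| \le \alpha \n{\xi}^2$ for all $\xi \in D(T)$. The imaginary part $\Im \tau = \frac{1}{2i}(\tau - \tau^*)$ is then a symmetric form on $D(T)$ with numerical range in $[-\alpha,\alpha]$, hence bounded, and since $D(T)$ is dense it extends uniquely to a bounded symmetric form on all of $\H$; by the representation of bounded forms there is a unique $B \in \B(\H)$, symmetric, with $\pin{B\xi}{\eta} = \Im\tau(\xi,\eta)$ for $\xi,\eta \in D(T)$. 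Now set $S := T - iB$ with $D(S) := D(T)$; then for $\xi,\eta \in D(T)$ we have $\pin{S\xi}{\eta} = \tau(\xi,\eta) - i\,\Im\tau(\xi,\eta) = \Re\tau(\xi,\eta)$, which is a symmetric form, so $\pin{S\xi}{\eta} = \ol{\pin{S\eta}{\xi}} = \pin{\eta}{S\xi}$ on the dense set $D(T)$; this is exactly the statement $S \subseteq S^*$, i.e. $S$ is symmetric. Equation \eqref{T_strip} is then immediate from $S = T - iB$.

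For the remaining points: item (1), $D(T) \subseteq D(T^*)$, follows because $T = S + iB$ with $S$ symmetric and $B$ bounded selfadjoint gives, for $\xi \in D(T)$, $\pin{T\xi}{\eta} = \pin{S\xi}{\eta} + i\pin{B\xi}{\eta} = \pin{\xi}{S\eta} + \pin{\xi}{-iB\eta} = \pin{\xi}{(S - iB)\eta}$ for all $\eta \in D(S) = D(T)$, so $\eta \in D(T^*)$ and $T^*\eta = (S-iB)\eta$ for every $\eta \in D(T)$; in particular $T^*|_{D(T)} = S - iB$. From this, $S = \frac{1}{2}(T + T^*|_{D(T)}) = \frac{1}{2}(T + T^*)|_{D(T)}$ and $B|_{D(T)} = \frac{1}{2i}(T - T^*|_{D(T)}) = \frac{1}{2i}(T - T^*)|_{D(T)}$, which is item (2) (read on $D(T)$, as the statement intends). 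For uniqueness: if $T = S' + iB'$ is another such decomposition with $B' \in \B(\H)$ symmetric, $S'$ symmetric, $D(S') = D(T)$, then the same computation applied to $S' + iB'$ forces $T^*|_{D(T)} = S' - iB'$, whence $iB'|_{D(T)} = \frac{1}{2}(T - T^*)|_{D(T)} = iB|_{D(T)}$; since $D(T)$ is dense and $B,B'$ are bounded, $B = B'$, and then $S' = T - iB' = T - iB = S$.

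The main obstacle I anticipate is the careful bookkeeping around the fact that $T^*$ is a priori defined on a possibly larger domain than $D(T)$, so all the identities in item (2) must be understood as holding after restricting $T^*$ to $D(T)$ — and one must check that the decomposition $T = S + iB$ genuinely produces a symmetric $S$ rather than merely a formal adjoint relation, which is why invoking density of $D(T)$ at the point where we pass from "$\pin{S\xi}{\eta}$ is a symmetric form" to "$S$ is a symmetric operator" is essential. The density is also what licenses the unique bounded extension of $\Im\tau$ and hence the uniqueness of $B$; everything else is a routine consequence of the representation theorem for bounded symmetric forms and the elementary algebra of adjoints.
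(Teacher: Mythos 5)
Your proposal is correct and follows essentially the same route as the paper: extract $B$ from the bounded imaginary part of the form $\pin{T\xi}{\eta}$, set $S:=T-iB$, and deduce items (1)--(2) and uniqueness from $T^*\supseteq S-iB$ on $D(T)$. (Only a cosmetic slip: in the symmetry check the chain should end with $\pin{S\xi}{\eta}=\ol{\pin{S\eta}{\xi}}=\pin{\xi}{S\eta}$, which is the statement $S\subseteq S^*$.)
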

\begin{proof}
	Let $\O$ be the sesquilinear form on $D(T)$ given by
	$$
	\O(\xi,\eta)=\pin{T\xi}{\eta}, \qquad \forall \xi, \eta \in D(T).
	$$
	Consider the real and imaginary parts $\Re \O$, $\Im \O$ of $\O$. The numerical range of $\O$ is exactly the one of $T$, so, from $\O=\Re \O+i\Im \O$, we have that $\Re\O$ and $\Im \O$ have numerical ranges in $\R$ and in $[-\alpha,\alpha]$, respectively. \\
	Consequently, $\Im \O$ is bounded, and since it is densely defined, it can be extended to a unique bounded form in whole $\H$. Hence, there exists a unique (symmetric) operator $B\in \B(\H)$ such that $\Im\O(\xi,\eta)=\pin{B\xi}{\eta}$, for all $\xi,\eta \in D(T)$. Now set $S:=T-iB$, hence $D(S)=D(T)$. We have, for $\xi \in D(S)$ with $\n{\xi}=1$,
	$$
	\pin{S\xi}{\xi}=\pin{T\xi}{\xi}-i\pin{B\xi}{\xi}=\O(\xi,\xi)-i\Im(\xi,\xi)=\Re(\xi,\xi) \in \R,
	$$
	therefore $S$ is symmetric. \\
	To prove {\it 1.} observe that $T^*=S^*-iB$, so $D(T)=D(S)\subseteq D(S^*)=D(T^*)$. This implies that $T+T^*$ is defined on $D(T)$ and 
	$$
	T+T^*=S+S^*=2S, \qquad \text{on $D(T)=D(S)$},
	$$
	hence $S=\frac{1}{2} (T+T^*)$. In a similar way it can be verified that $B_{|D(T)}=\frac{1}{2i} (T-T^*)$, which proves statement 2.\\
	Suppose now that $T=S'+iB'$, with $S',B'$ symmetric operators, $D(S')=D(T)$ and $B'\in \B(\H)$. It follows that $S-S'=-i(B-B')$, but both $S-S'$ and $B-B'$ are symmetric, therefore $S=S'$ and $B=B'$.
\end{proof}

\no	Denote by $S(\H)$ the family of symmetric operators on $\H$ and $St(\H)$ the family of densely defined operators on $\H$ with numerical range in a strip $\St_\alpha$. Thus, with the aid of the previous lemma, we can formulate the following correspondence and its properties.

\begin{cor}
	The map $S(\H)\times \B(\H) \to St(\H)$ defined by	$(S,B)\mapsto S+iB$ 
	is a bijection.
\end{cor}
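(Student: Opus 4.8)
The plan is to deduce the statement essentially entirely from Lemma \ref{lem_decomp_strip}, the one non-automatic point being that the map really does take values in $St(\H)$. So I would first check well-definedness: given $(S,B)\in S(\H)\times\B(\H)$, since $S$ is symmetric it is densely defined and $B$ is defined on all of $\H$, so $D(S+iB)=D(S)$ is dense; and for a unit vector $\xi\in D(S)$,
\[
\pin{(S+iB)\xi}{\xi}=\pin{S\xi}{\xi}+i\pin{B\xi}{\xi},
\]
with $\pin{S\xi}{\xi}\in\R$ and $\pin{B\xi}{\xi}\in\R$, $|\pin{B\xi}{\xi}|\le\n{B}$; hence $\rn_{S+iB}\subseteq\{\lambda\in\C:|\Im\lambda|\le\n{B}\}=\ol{\St_{\n{B}}}$, which is a horizontal strip, and so $S+iB\in St(\H)$.

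Surjectivity is then exactly the existence assertion of Lemma \ref{lem_decomp_strip}: if $T\in St(\H)$, say $\rn_T\subseteq\St_\alpha$, the lemma provides a symmetric $B\in\B(\H)$ and a symmetric $S$ with $D(S)=D(T)$ and $T=S+iB$, i.e.\ $(S,B)\in S(\H)\times\B(\H)$ is a preimage of $T$. For injectivity I would quote the uniqueness part of Lemma \ref{lem_decomp_strip}, or re-run its short argument: if $S+iB=S'+iB'$, comparing domains gives $D(S)=D(S')$, and on this common dense domain $S-S'=-i(B-B')$; the operator on the left is a difference of symmetric operators, hence symmetric, so its numerical range lies in $\R$, while the operator on the right has numerical range in $i\R$ since $B-B'$ is symmetric. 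Thus $\pin{(S-S')\xi}{\xi}=0$ for every $\xi\in D(S)$, and polarization forces $S-S'=0$, whence $S=S'$ and then $B=B'$.

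No real obstacle is expected here; the only step needing a moment's care is the well-definedness check, and in particular the observation that $St(\H)$ must be read as the class of densely defined operators whose numerical range lies in \emph{some} horizontal strip $\St_\alpha$ (it is the strip $\ol{\St_{\n{B}}}$ that witnesses $S+iB\in St(\H)$), so that the codomain genuinely matches $S(\H)\times\B(\H)$ with no a priori bound on $\n{B}$.
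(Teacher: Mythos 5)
Your proof is correct and is exactly the argument the paper intends: the corollary is stated as an immediate consequence of Lemma \ref{lem_decomp_strip}, with surjectivity being its existence part and injectivity its uniqueness part, plus the easy well-definedness check you supply. One caveat: your step ``$\pin{B\xi}{\xi}\in\R$'' and your injectivity argument (``$B-B'$ is symmetric'') tacitly require the second factor to consist of bounded \emph{symmetric} operators, as in the Lemma; the paper's $\B(\H)$ is literally defined as all bounded operators, and under that reading the map is well defined into $St(\H)$ (since $\Im\,i\pin{B\xi}{\xi}=\Re\pin{B\xi}{\xi}$ is still bounded by $\n{B}$) but fails to be injective, because $(S,B)$ and $(S+C,\,B+iC)$ have the same image for any bounded symmetric $C$. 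With the symmetric reading, which is clearly the intended one, your argument is complete.
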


\begin{lem}
	\label{corr_extens_strip}
	Let $\St_\alpha$ be a horizontal strip, $T$ a densely defined operator with numerical range $\rn_T \subseteq \St_\alpha$ and $T=S+iB$ the decomposition \emph{(\ref{T_strip})}. \\
	The map $S'\mapsto T':=S'+iB$ defines 
	\begin{enumerate}
		\item a one-to-one correspondence between all extensions $S'$ of $S$ and all extensions $T'$ of $T$;
		\item a one-to-one correspondence between all symmetric extensions $S'$ of $S$ and all extensions $T'$ of $T$ with numerical range $\rn_{T'}\subseteq \ol{\St_\alpha}$.
	\end{enumerate}
\end{lem}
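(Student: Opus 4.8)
The plan is to handle the two statements separately, treating statement~1 as essentially bookkeeping about operator domains and reserving the real work for statement~2.

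For statement~1, the key observation is that $B\in\B(\H)$ is everywhere defined, so for any operator $R$ one has $D(R+iB)=D(R)$ and $(R+iB)\xi=R\xi+iB\xi$. Hence, if $S'$ is an extension of $S$, then $T':=S'+iB$ has $D(T')=D(S')\supseteq D(S)=D(T)$ and, for $\xi\in D(T)$, $T'\xi=S'\xi+iB\xi=S\xi+iB\xi=T\xi$, so $T\subseteq T'$. The map is injective because $S'$ is recovered from $T'=S'+iB$ as $S'=T'-iB$ (no domain is lost since $B$ is everywhere defined). For surjectivity I would take an arbitrary extension $T'$ of $T$ and set $S':=T'-iB$; then $D(S')=D(T')$, and for $\xi\in D(T)\subseteq D(T')$ one gets $S'\xi=T'\xi-iB\xi=T\xi-iB\xi=S\xi$, so $S\subseteq S'$ and $S'\mapsto S'+iB=T'$. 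This establishes the bijection of statement~1.

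For statement~2 it then suffices, in view of statement~1, to show that under this correspondence $S'$ is symmetric if and only if $\rn_{T'}\subseteq\ol{\St_\alpha}$. One direction is a direct computation: if $S'$ is symmetric and $\xi\in D(T')=D(S')$ with $\n{\xi}=1$, then $\pin{T'\xi}{\xi}=\pin{S'\xi}{\xi}+i\pin{B\xi}{\xi}$, where $\pin{S'\xi}{\xi}\in\R$, and $\pin{B\xi}{\xi}\in[-\alpha,\alpha]$. The latter holds because, as in the proof of Lemma~\ref{lem_decomp_strip}, $B$ is the bounded operator extending the form $\Im\O$ whose numerical range lies in $[-\alpha,\alpha]$; since $B$ is bounded and $D(T)$ is dense, $\ol{\rn_B}=\ol{\rn_{B_{|D(T)}}}\subseteq[-\alpha,\alpha]$, so $\rn_B\subseteq[-\alpha,\alpha]$. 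Consequently $\Im\pin{T'\xi}{\xi}\in[-\alpha,\alpha]$, i.e. $\rn_{T'}\subseteq\ol{\St_\alpha}$.

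The converse is the step I expect to be the only genuine obstacle, and the idea is to re-apply Lemma~\ref{lem_decomp_strip} to $T'$ and use the uniqueness of the imaginary part. Suppose $T'$ extends $T$ and $\rn_{T'}\subseteq\ol{\St_\alpha}$. Then $T'$ is densely defined (it extends the densely defined $T$) and $\ol{\St_\alpha}=\{\lambda\in\C:|\Im\lambda|\leq\alpha\}$ is itself a horizontal strip in the sense of Section~\ref{sec:strip}, so Lemma~\ref{lem_decomp_strip} gives symmetric operators $\tilde B\in\B(\H)$ and $\tilde S$ with $D(\tilde S)=D(T')$ and $T'=\tilde S+i\tilde B$, where $\pin{\tilde B\xi}{\eta}=\Im\tilde\O(\xi,\eta)$ for $\tilde\O(\xi,\eta)=\pin{T'\xi}{\eta}$ on $D(T')$. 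Restricting to $\xi,\eta\in D(T)$ and using $T\subseteq T'$ yields $\tilde\O(\xi,\eta)=\pin{T\xi}{\eta}=\O(\xi,\eta)$, hence $\pin{\tilde B\xi}{\eta}=\Im\O(\xi,\eta)=\pin{B\xi}{\eta}$ for all $\xi,\eta$ in the dense set $D(T)$; since $\tilde B$ and $B$ are bounded, $\tilde B=B$. Therefore $T'=\tilde S+iB$, so the preimage of $T'$ under the map $S'\mapsto S'+iB$ is exactly $\tilde S$, which is symmetric. Together with the injectivity from statement~1, this shows that the map restricts to a one-to-one correspondence between the symmetric extensions of $S$ and the extensions $T'$ of $T$ with $\rn_{T'}\subseteq\ol{\St_\alpha}$, proving statement~2.
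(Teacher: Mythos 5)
Your proof is correct and follows essentially the same route as the paper: statement 1 is domain bookkeeping using that $B$ is everywhere defined, and for statement 2 the key step is to apply Lemma \ref{lem_decomp_strip} to $T'$ and deduce from $T\subseteq T'$, density of $D(T)$ and boundedness of the imaginary parts that the new bounded component coincides with $B$, so that $S'=T'-iB$ is symmetric. You simply spell out the details (in particular $\rn_B\subseteq[-\alpha,\alpha]$ and $\tilde B=B$) that the paper leaves as ``obvious''.
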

\begin{proof}
	The first statement is obvious. Let $S'$ be a symmetric extension of $S$, then, clearly, $T':=S'+iB$ is an extension of $T$ whose numerical range satisfies $\rn_{T'}\subseteq \ol{\St_\alpha}$.\\  
	Now, let $T'$ be an extension of $T$ with numerical range $\rn_{T'}\subseteq \ol{\St_\alpha}$, and $T'=S'+iB'$ the decomposition given by \hyperref[lem_decomp_strip]{Lemma} \ref{lem_decomp_strip}. Since $T\subseteq T'$ then, following the proof of the same lemma, $B=B'$; hence $S=T-iB\subseteq T'-iB=S'$.
\end{proof}

\begin{cor}
	\label{cor_max_strip}
	Let $\St_\alpha$ be a horizontal strip, $T$ a densely defined operator with numerical range $\rn_T \subseteq \St_\alpha$ and $T=S+iB$ the decomposition \emph{(\ref{T_strip})}. 
	\begin{enumerate}
		\item If $S$ is maximal symmetric, then $T$ is $\St_\alpha$-maximal.
		\item If $\St_\alpha=\ol{\St_\alpha}$ is closed, then $T$ is $\ol{\St_\alpha}$-maximal if and only if $S$ is maximal symmetric.
	\end{enumerate}
\end{cor}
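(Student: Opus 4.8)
The plan is to deduce both statements from Lemma \ref{corr_extens_strip} together with the definition of $\rn$-maximality, treating $\St_\alpha$-maximality and $\ol{\St_\alpha}$-maximality as the two cases covered respectively by parts (1) and (2) of that lemma. First I would record the elementary observation that, by Lemma \ref{corr_extens_strip}, the map $S'\mapsto S'+iB$ is a bijection from extensions $S'$ of $S$ onto extensions $T'$ of $T$, and it restricts to a bijection from \emph{symmetric} extensions of $S$ onto extensions of $T$ with numerical range in $\ol{\St_\alpha}$. Moreover this bijection is inclusion-preserving in both directions (adding the fixed operator $iB$ does not change the partial order), and its inverse sends $T'$ to $S'$, the real part of $T'$ in the decomposition of Lemma \ref{lem_decomp_strip}; in particular $S'=S$ if and only if $T'=T$. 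These two facts are what make the maximality statements transfer cleanly.

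For part (1), suppose $S$ is maximal symmetric. I need to show $T$ has no proper extension with numerical range in $\St_\alpha$. Let $T'\supseteq T$ with $\rn_{T'}\subseteq \St_\alpha\subseteq \ol{\St_\alpha}$. By part (2) of Lemma \ref{corr_extens_strip}, $T'=S'+iB$ for a (unique) symmetric extension $S'$ of $S$; since $S$ is maximal symmetric, $S'=S$, hence $T'=S+iB=T$. Thus $T$ is $\St_\alpha$-maximal. (Note that, as observed in the excerpt, the symmetric operator $S$ of the decomposition need not be densely defined in general, but here $D(S)=D(T)$ and $T$ is densely defined, so the notion of maximal symmetric operator applies to $S$ without difficulty.)

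For part (2), assume $\St_\alpha=\ol{\St_\alpha}$ is closed. The ``if'' direction is immediate from part (1). For ``only if'', suppose $T$ is $\ol{\St_\alpha}$-maximal and let $S'$ be any symmetric extension of $S$. Then $T':=S'+iB$ is, by part (2) of Lemma \ref{corr_extens_strip}, an extension of $T$ with $\rn_{T'}\subseteq \ol{\St_\alpha}$; by $\ol{\St_\alpha}$-maximality of $T$ we get $T'=T$, and applying the inverse of the bijection (equivalently, the uniqueness in Lemma \ref{lem_decomp_strip}) yields $S'=S$. Hence $S$ has no proper symmetric extension, i.e.\ $S$ is maximal symmetric. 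The only point requiring a little care — and the place I expect to be the main (if mild) obstacle — is making sure the correspondence is genuinely inclusion-reversing-free, i.e.\ that an arbitrary symmetric extension of $S$ really does land among the extensions of $T$ with numerical range in $\ol{\St_\alpha}$ and conversely; but this is exactly the content of Lemma \ref{corr_extens_strip}(2), so no new work is needed beyond citing it.
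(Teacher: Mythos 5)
Your proposal is correct and is exactly the argument the paper intends: the corollary is stated without proof as an immediate consequence of Lemma \ref{corr_extens_strip}, and your deduction (transferring maximality through the bijection $S'\mapsto S'+iB$ and its inverse given by the uniqueness in Lemma \ref{lem_decomp_strip}) is precisely that. No gaps.
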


\no \hyperref[th_car_parz_n_max]{Theorem} \ref{th_car_parz_n_max} is adapted to the case of a strip as follows.

\begin{teor}
	\label{th_car_strip}
	Let $\ol{\St}$ be a closed strip of $\C$ and $T$ an operator on $\H$ with numerical range in $\ol{\St}$. The following statements are equivalent:
	\begin{enumerate}
		\item $R(T-\lambda I)=\H$ for some $\lambda\in \ol{\St}^c$;
		\item a connected component of $\ol{\St}^c$ %(i.e., $\{\lambda \in \C: \Im \lambda >\alpha\}$ or $\{\lambda \in \C: \Im \lambda <-\alpha\}$) 
		is contained in the resolvent set $\rho(T)$ of $T$ (i.e., a defect index of $T$ is $0$);
		\item $T$ is $\ol{\St}$-maximal and densely defined.
	\end{enumerate}
	If these conditions are satisfied, then $T$ is closed.
\end{teor}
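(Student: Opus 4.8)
The plan is to reduce to a horizontal closed strip and then exploit the decomposition of \hyperref[lem_decomp_strip]{Lemma} \ref{lem_decomp_strip}, von Neumann's \hyperref[car_simm]{Theorem} \ref{car_simm}, and the elementary bounded‑perturbation theorem for resolvents. Since $\ol{\St}$ is an affine image $a\ol{\St_\alpha}+b$ (with $a\in\C\backslash\{0\}$, $\alpha\geq0$) of a horizontal closed strip, the map $T'\mapsto a^{-1}(T'-bI)$ is a domain‑preserving bijection between the extensions of $T$ and those of $\mathcal{T}:=a^{-1}(T-bI)$, it sends numerical ranges inside $\ol{\St}$ to numerical ranges inside $\ol{\St_\alpha}$ (hence $\ol{\St}$‑maximality of $T$ is equivalent to $\ol{\St_\alpha}$‑maximality of $\mathcal{T}$), and it transforms $\rho(T')$, the ranges $R(T'-\lambda I)$ and the connected components of $\ol{\St}^c$ into the corresponding objects for $\mathcal{T}$ with $\lambda$ replaced by $a^{-1}(\lambda-b)$. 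So I may assume $\ol{\St}=\ol{\St_\alpha}=\{\lambda\in\C:|\Im\lambda|\leq\alpha\}$, which is a proper, closed, convex subset of $\C$.

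Granting this, the implications $1.\Rightarrow2.\Rightarrow3.$ and the closedness of $T$ under condition $3.$ are already contained in \hyperref[th_car_parz_n_max]{Theorem} \ref{th_car_parz_n_max} (taken with $\rn=\ol{\St_\alpha}$; closedness also follows directly from \hyperref[pro_rn_max_chius]{Proposition} \ref{pro_rn_max_chius}), and $2.\Rightarrow1.$ is immediate. Thus the whole point is the implication $3.\Rightarrow2.$. So I would assume that $T$ is $\ol{\St_\alpha}$‑maximal and densely defined — hence closed — and use \hyperref[lem_decomp_strip]{Lemma} \ref{lem_decomp_strip} to write $T=S+iB$ with $S$ densely defined symmetric, $D(S)=D(T)$, and $B\in\B(\H)$ self‑adjoint; since $\Im\pin{T\xi}{\xi}=\pin{B\xi}{\xi}$ for $\xi\in D(T)$ with $\n{\xi}=1$ and $\rn_T\subseteq\ol{\St_\alpha}$, one gets $\n{B}\leq\alpha$. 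By the second part of \hyperref[cor_max_strip]{Corollary} \ref{cor_max_strip}, $S$ is maximal symmetric, so by \hyperref[car_simm]{Theorem} \ref{car_simm} one of the half‑planes $\{\lambda\in\C:\Im\lambda>0\}$, $\{\lambda\in\C:\Im\lambda<0\}$ is contained in $\rho(S)$; say the former (the other case is entirely analogous).

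The one genuinely non‑formal step — and what I expect to be the crux — is to pass from $\rho(S)$ to $\rho(T)$. Since $S$ is symmetric, $\pin{S\xi}{\xi}\in\R$, so $\n{(S-\mu I)\xi}\,\n{\xi}\geq|\Im\pin{(S-\mu I)\xi}{\xi}|=|\Im\mu|\,\n{\xi}^2$ and therefore $\n{(S-\mu I)^{-1}}\leq(\Im\mu)^{-1}$ whenever $\Im\mu>0$. Hence, if $\n{B}>0$ and $\Im\mu>\alpha\geq\n{B}$, then $\n{iB}\,\n{(S-\mu I)^{-1}}\leq\n{B}(\Im\mu)^{-1}<1$, and the resolvent perturbation theorem (see \cite[Ch. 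IV]{Kato}) gives $\mu\in\rho(S+iB)=\rho(T)$; and if $\n{B}=0$ then $\rn_T\subseteq\R$, $T=S$, and the conclusion is immediate. In either situation the connected component $\{\lambda\in\C:\Im\lambda>\alpha\}$ of $\ol{\St_\alpha}^c$ lies in $\rho(T)$, which is statement $2.$ (starting from $\{\Im\lambda<0\}\subseteq\rho(S)$ one obtains $\{\lambda\in\C:\Im\lambda<-\alpha\}\subseteq\rho(T)$ instead). This closes the chain $1.\Leftrightarrow2.\Leftrightarrow3.$, and the final closedness assertion is \hyperref[pro_rn_max_chius]{Proposition} \ref{pro_rn_max_chius}.
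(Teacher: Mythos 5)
Your proposal is correct and follows essentially the same route as the paper: reduce to a horizontal closed strip, decompose $T=S+iB$ via Lemma \ref{lem_decomp_strip}, invoke Corollary \ref{cor_max_strip} to get $S$ maximal symmetric, and then transfer a half-plane from $\rho(S)$ to $\rho(T)$ using $\n{B}\leq\alpha$, the estimate $\n{(S-\lambda I)^{-1}}\leq|\Im\lambda|^{-1}$ and the bounded-perturbation theorem for the resolvent. The only differences are cosmetic (you prove $3.\Rightarrow2.$ rather than $3.\Rightarrow1.$ and spell out the affine reduction and the $B=0$ case more explicitly).
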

\begin{proof}
	We only have to prove the implication {\it 3. $\Rightarrow$ 1.} by \hyperref[th_car_parz_n_max]{Theorem} \ref{th_car_parz_n_max}. \\
	With a linear transformation (which does not change the maximality), we can restrict ourselves to the case in which $\ol{\St}$ is horizontal, i.e., $\ol{\St}=\{\lambda \in \C:|\Im \lambda|\leq \alpha\} $, for some $\alpha\geq0$.\\
	%The case $\alpha=0$ is trivial. % since $\ol{\St_\alpha}=\R$. \\
	Let $T=S+iB$ be the decomposition (\ref{T_strip}). The case $B=0$ is trivial. Assume $B\neq 0$, hence $\alpha>0$.	By \hyperref[cor_max_strip]{Corollary} \ref{cor_max_strip} $S$ is maximal symmetric, hence we can find $\lambda \in \rho(S)$ such that $|\Im\lambda|>\alpha $. As proved in the proof of \hyperref[lem_decomp_strip]{Lemma} \ref{lem_decomp_strip}, $B$ has numerical range in $[-\alpha,\alpha]$; this implies that $\n{B}\leq \alpha$. We also have
	$
	\n{(S-\lambda I)^{-1}}\leq |\Im \lambda|^{-1},
	$
	therefore
	$
	\n{(S-\lambda I)^{-1}}\leq |\Im \lambda|^{-1}<\alpha^{-1} \leq \n{B}^{-1}.
	$
	By \cite[Theorem 5.11]{Weid}, $\lambda \in \rho(T)$.
\end{proof}

\begin{cor}
	\label{cor_strip_auto}
	Let $T$ be a densely defined operator with numerical range contained in a closed strip $\ol{\St}$. Then $D(T)=D(T^*)$ if and only if $\ol{\St} ^c \subseteq \rho(T)$.
\end{cor}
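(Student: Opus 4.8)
The plan is to pass to the symmetric part of $T$ and reduce everything to von Neumann's criterion, \hyperref[car_simm]{Theorem} \ref{car_simm}. First, arguing exactly as in the proof of \hyperref[th_car_strip]{Theorem} \ref{th_car_strip}, a linear transformation reduces the statement to a horizontal closed strip $\ol{\St}=\ol{\St_\alpha}=\{\lambda\in\C:|\Im\lambda|\le\alpha\}$ with $\alpha\ge0$ — such a transformation changes neither whether $D(T)=D(T^*)$ nor whether $\ol{\St}^c\subseteq\rho(T)$. Then I would invoke \hyperref[lem_decomp_strip]{Lemma} \ref{lem_decomp_strip} to write $T=S+iB$ with $S$ symmetric, $D(S)=D(T)$, and $B\in\B(\H)$ symmetric with $\n{B}\le\alpha$ (its numerical range lies in $[-\alpha,\alpha]$, as recorded in the proof of \hyperref[th_car_strip]{Theorem} \ref{th_car_strip}). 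Since $B$ is bounded, $T^*=S^*-iB$ with $D(T^*)=D(S^*)$, so $D(T)=D(T^*)$ if and only if $D(S)=D(S^*)$, i.e.\ (because $S\subseteq S^*$) if and only if $S$ is self-adjoint. Thus the whole corollary reduces to the equivalence: $S$ is self-adjoint $\iff$ $\ol{\St_\alpha}^c\subseteq\rho(T)$.

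For the forward implication I would assume $S$ self-adjoint, so $\C\backslash\R\subseteq\rho(S)$ and $\n{(S-\lambda I)^{-1}}\le|\Im\lambda|^{-1}$ for $\Im\lambda\neq0$ by \hyperref[car_simm]{Theorem} \ref{car_simm}. For any $\lambda\in\ol{\St_\alpha}^c$ one has $|\Im\lambda|>\alpha\ge\n{B}$, hence $\n{(S-\lambda I)^{-1}}\,\n{B}<1$; writing $T-\lambda I=(S-\lambda I)+iB$ and applying the stability of the resolvent set under small bounded perturbations (\cite[Theorem 5.11]{Weid}, precisely as in the proof of \hyperref[th_car_strip]{Theorem} \ref{th_car_strip}) yields $\lambda\in\rho(T)$, so $\ol{\St_\alpha}^c\subseteq\rho(T)$. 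Note that \hyperref[th_car_strip]{Theorem} \ref{th_car_strip} itself only produces a single resolvent point outside the strip; it is the self-adjointness of $S$ — giving resolvent points in \emph{both} half-planes — that delivers all of $\ol{\St_\alpha}^c$.

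For the converse I would assume $\ol{\St_\alpha}^c\subseteq\rho(T)$; then $\rho(T)\neq\varnothing$, so $T$ is closed and $S=T-iB$ is closed. The key claim is that $\{\lambda\in\C:|\Im\lambda|>2\alpha\}\subseteq\rho(S)$: for such $\lambda$ we have $\lambda\in\ol{\St_\alpha}^c\subseteq\rho(T)$ and, by \hyperref[lem_intro]{Lemma} \ref{lem_intro}(5), $\n{(T-\lambda I)^{-1}}\le(\mathrm{dist}(\lambda,\ol{\rn_T}))^{-1}\le(|\Im\lambda|-\alpha)^{-1}$; since $\n{B}\le\alpha<|\Im\lambda|-\alpha$, the operator $S-\lambda I=(T-\lambda I)-iB$ again has a bounded everywhere-defined inverse, so $\lambda\in\rho(S)$. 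Finally, $\ol{\rn_S}\subseteq\R$, so each open half-plane lies in a connected component of $\ol{\rn_S}^c$, and each such component meets $\{\lambda:|\Im\lambda|>2\alpha\}\subseteq\rho(S)$; by \hyperref[lem_intro]{Lemma} \ref{lem_intro}(3) together with the constancy of the defect index on connected components of $\ol{\rn_S}^c$, such a component is contained in $\rho(S)$. Hence $\C\backslash\R\subseteq\rho(S)$, so $S$ is self-adjoint by \hyperref[car_simm]{Theorem} \ref{car_simm}, and combining the two implications proves the corollary.

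The step I expect to be the main obstacle is exactly this last one in the converse: the perturbation estimate only transfers resolvent points of $T$ to those of $S$ on the strictly smaller set $\{\lambda:|\Im\lambda|>2\alpha\}$, so one cannot conclude $\C\backslash\R\subseteq\rho(S)$ directly and must instead exploit the rigidity of the defect index on each of the two half-planes. The remaining ingredients — the decomposition $T=S+iB$ and von Neumann's theorem — are routine.
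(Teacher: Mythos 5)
Your proof is correct and follows the same route as the paper: reduce to a horizontal strip, decompose $T=S+iB$ via Lemma \ref{lem_decomp_strip} so that $D(T)=D(T^*)$ holds iff $S$ is self-adjoint, and transfer resolvent points between $S$ and $T$ by the bounded-perturbation estimate. The paper compresses both directions into the single remark that the argument is ``like the one used in the proof of Theorem \ref{th_car_strip}''; your write-up merely supplies the details it omits, in particular the resolvent bound of Lemma \ref{lem_intro}(5) and the connected-component (defect index) step needed for the converse.
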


\begin{proof}
	It is not restrictive that we consider $\ol{\St}=\{\lambda \in \C:|\Im \lambda|\leq \alpha\} $, for some $\alpha\geq 0$. Let $T=S+iB$ be the decomposition (\ref{T_strip}). By \hyperref[lem_decomp_strip]{Lemma} \ref{lem_decomp_strip} $D(T)=D(T^*)$ if and only if $D(S)=D(S^*)$ if and only if $S$ is self-adjoint. But, with an argument like the one used in the proof of \hyperref[th_car_strip]{Theorem} \ref{th_car_strip}, $S$ is self-adjoint, if and only if $\ol{\St} ^c \subseteq \rho(T)$.
\end{proof}

\begin{osr}
	The sufficient implication of \hyperref[cor_strip_auto]{Corollary} \ref{cor_strip_auto}, in the case of horizontal strip, is also proved in \cite[Theorem 7.1.2]{Haase_b}.
\end{osr}

\begin{pro}
	Let $\rn \subset \C$ be a proper, convex subset of $\C$, $\ol{\St}$ be a closed strip, such that $\rn\cap \ol{\St} \neq \varnothing$ and $\rn$ does not contain any of two half-planes which constitute $\ol{\St}^c$. Moreover, let $T$ be a densely defined operator on $\H$ with numerical range $\rn_T\subseteq \rn\cap \ol{\St}$. If $T$ is $\ol{\St}$-maximal, then $T$ is $\rn$-maximal.
\end{pro}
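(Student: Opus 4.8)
The plan is to reduce the statement to Proposition \ref{pro_rn_max_dens} by exhibiting a single point $\lambda\in\rn^c$ at which $R(T-\lambda I)=\H$. First I would invoke Theorem \ref{th_car_strip}: since $T$ is densely defined, has numerical range $\rn_T\subseteq\ol{\St}$, and is $\ol{\St}$-maximal, statement 3 of that theorem holds, hence so does statement 2, i.e.\ one of the two connected components of $\ol{\St}^c$ is contained in $\rho(T)$. Each such component is an (open) half-plane; call the one contained in $\rho(T)$ by the name $H$.

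The crux is then an elementary observation. By hypothesis $\rn$ contains neither of the two half-planes constituting $\ol{\St}^c$; in particular $H\not\subseteq\rn$, so there exists $\lambda\in H$ with $\lambda\notin\rn$. Since $H\subseteq\rho(T)$, for this particular $\lambda$ the operator $T-\lambda I$ maps $D(T)$ bijectively onto $\H$; in particular $R(T-\lambda I)=\H$, while at the same time $\lambda\in\rn^c$.

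Finally, because $\rn_T\subseteq\rn\cap\ol{\St}\subseteq\rn$ and $R(T-\lambda I)=\H$ for the above $\lambda\in\rn^c$, Proposition \ref{pro_rn_max_dens} applies verbatim and gives that $T$ is densely defined, closed and $\rn$-maximal, which is the assertion.

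I do not anticipate a substantial obstacle: the entire content of the argument is the recognition that the hypothesis ``$\rn$ does not contain either half-plane of $\ol{\St}^c$'' is exactly what is needed to place a point of the resolvent half-plane $H$ outside $\rn$, after which Theorem \ref{th_car_strip} supplies $H\subseteq\rho(T)$ and Proposition \ref{pro_rn_max_dens} finishes the job. The only minor point to keep in mind is that $\ol{\St}^c$ always decomposes into exactly two half-planes (whether the closed strip $\ol{\St}$ is nondegenerate or reduces to a line), so that ``a connected component of $\ol{\St}^c$'' in Theorem \ref{th_car_strip} genuinely means one of those two half-planes, and the hypothesis on $\rn$ applies to it.
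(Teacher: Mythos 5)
Your proof is correct and follows essentially the same route as the paper: apply Theorem \ref{th_car_strip} to place one of the two half-planes of $\ol{\St}^c$ inside $\rho(T)$, use the hypothesis that $\rn$ contains neither half-plane to pick a point $\lambda$ of that half-plane lying in $\rn^c$, and conclude with Proposition \ref{pro_rn_max_dens}. Your write-up merely makes explicit the selection of $\lambda$ that the paper's proof leaves implicit.
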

\begin{proof}
	By \hyperref[th_car_strip]{Theorem} \ref{th_car_strip}, $R(T-\lambda I)=\H$ for all $\lambda$ contained in a connected component of $\ol{\St}^c$ (i.e., one of the two half-planes which constitute $\ol{\St}^c$). By the hypothesis and applying \hyperref[pro_rn_max_dens]{Proposition} \ref{pro_rn_max_dens}, $T$ is $\rn$-maximal.
\end{proof}

\begin{esm}
	Let $AC[a,b]$ be the set of absolutely continuous function on an interval $[a,b]$,  $\mathcal{J}$ be one of the open intervals $(0,1),(0,\infty),\R$, and
	$$
	H^1(\mathcal{J})=\{f\in L^2(\mathcal{J}):f\in AC[a,b] \text{ for all } [a,b]\subseteq\ol{\mathcal{J}} \text{ and } f'\in L^2(\mathcal{J})\}
	$$
	$$
	H^1_0(0,1)=\{f\in H^1(0,1):f(0)=f(1)=0\}
	$$
	$$
	H^1_0(0,+\infty)=\{f\in H^1(0,+\infty):f(0)=0\}.
	$$	
	Consider the densely defined differential operator $T$ on $L^2(\mathcal{J})$ given by 
	$$
	(Tf)(x)=i \left (f'(x) + r(x)f(x) \right), \qquad \forall f\in H^1_0(\mathcal{J}),
	$$
	on the domain $D(T)=H^1_0(\mathcal{J})$ if $\mathcal{J}=(0,1)$ or $\mathcal{J}=(0,+\infty)$, or on the domain $D(T)=H^1(\mathcal{J})$ if $\mathcal{J}=\R$, 
	where $r:\mathcal{J}\to \R$ is a bounded continuous function, i.e., there exists $m>0$ such that $|r(x)|\leq m$, for all $x \in \mathcal{J}$.\\
	The numerical range of $T$ is contained in the strip $\ol{\St_m}$.  Our goal is to find all the $\ol{\St}$-maximal extensions $T'$ of $T$, where $\ol{\St}$ is a closed horizontal strip containing $\ol{\St_m}$. Therefore, let $k\geq m$ and $T'$ be a $\ol{\St_k}$-maximal extension of $T$. \\
	Clearly, the decomposition of \hyperref[lem_decomp_strip]{Lemma} \ref{lem_decomp_strip} is $T=S+iB$, where $S$ is the symmetric operator with domain $D(T)$ defined by 
	$$
	(Sf)(x)=if'(x), \qquad \forall f\in D(T),
	$$
	and $B$ is the bounded symmetric operator on $L^2(\mathcal{J})$ given by
	$$
	(Bf)(x)=r(x) f(x), \qquad \forall f\in L^2(\mathcal{J}).
	$$
	On the other hand, $T'=S'+B'$ by \hyperref[lem_decomp_strip]{Lemma} \ref{lem_decomp_strip} where, in particular, $S'$ is maximal symmetric by \hyperref[cor_max_strip]{Corollary} \ref{cor_max_strip}. Since $T\sub T'$, we have $B=B'$ and $S\sub S'$ by \hyperref[corr_extens_strip]{Lemma} \ref{corr_extens_strip}.\\
	It is well-known (see \cite[Sect. 13.2]{Schm}) that $S$ is closed and has defect indexes $d_+=\dim R(S+i I)^\perp$ and $d_-=\dim R(S-i I)^\perp$:
	\begin{enumerate}
		\item $d_+=d_-=1$, if $\mathcal{J}=(0,1)$;
		\item $d_+=1$, $d_-=0$ (and hence $S$ is maximal symmetric), if $\mathcal{J}=(0,+\infty)$;
		\item $d_+=d_-=0$ (i.e., $S$ is self-adjoint), if $\mathcal{J}=(-\infty,+\infty)$.
	\end{enumerate}
	It follows that $T=S+iB$ is $\ol{\St_k}$-maximal in the cases $\mathcal{J}=(0,+\infty)$ and $\mathcal{J}=(-\infty,+\infty)$. 
	Conversely, in the case $\mathcal{J}=(0,1)$, all the maximal symmetric  extensions (that are also self-adjoint) of $S$ are the operators $S_\theta$ (where $\theta$ is a complex number of modulus $1$) with domains 
	$
	D(S_\theta)=\{f\in H^1(0,1):f(-1)=\theta f(1)\},
	$
	and given by $	(S_\theta f)(x)=if'(x)$, for all $f\in D(S_\theta)$. Consequently, for some $\theta \in \C$ with $|\theta|=1$, $T'$ is the operator defined on the domain $D(T')=D(S_\theta)$ as
	$$
	(T' f)(x)=i \left (f'(x) + r(x) f(x) \right), \qquad \forall f\in D(T_\theta).
	$$
	Note that in all cases $T'$ has numerical range in the smaller strip $\ol{\St_m}$, hence all $\ol{\St_k}$-maximal extension of $T$ are actually $\ol{\St_m}$-maximal.
\end{esm}

\section{$\rn$-maximal operators as generators of semi-groups}
\label{sec:exampl}	

In this section we report some assertions (in part well-known) regarding generators of semi-groups on $\H$ which are $\rn$-maximal, with some proper, convex subsets $\rn$. \\

\no Let $\{S(t)\}_{t\geq 0}$ be a strongly continuous semi-group of bounded operators on $\H$ and let $A$ be its generator.  We recall that (\cite[Ch. I, Th. 2.2]{Pazy}) there exist constants $M\geq 1, \omega \geq 0$ such that 
\begin{equation}
\label{bound_semigroup}
\n{S(t)}\leq Me^{\omega t}, \qquad  \forall t\geq 0.
\end{equation}
Moreover, if the semi-group extends to a strongly continuous group $\{S(t)\}_{t\in \R}$, then 
there exist constants $M\geq 1, \omega \geq 0$ such that 
\begin{equation}
\label{bound_group}
\n{S(t)}\leq Me^{\omega |t|}, \qquad  \forall t\in \R.
\end{equation}

\begin{enumerate}
	\item The Lumer-Phillips theorem (\cite[Ch. I, Th. 4.3]{Pazy}) states that $\{S(t)\}_{t\geq 0}$ is a semi-group of contractions if and only if $A$ is a densely defined maximal dissipative.
	\item An immediate consequence of point 1 is that a semi-group $\{S(t)\}_{t\geq 0}$ satisfies $\n{S(t)}\leq e^{\omega t}$ for some $\omega \in \R$ and for all $t\geq 0$ if and only if $A$ is $\rn$-maximal and densely defined, where $\rn:=\{\lambda \in \C:\Re \lambda \leq \omega \}$ (see \cite[Ch. II, Ex. 2.2]{Engel-Nagel}).
	\item As proved in \cite[Theorem 1.1.4]{Phillips}, $\{S(t)\}_{t\geq 0}$ is a semi-group of isometries if and only if the numerical range of $A$ is contained in $\rn:=\{\lambda \in \C:\Re\lambda =0\}$ and $A$ is maximal dissipative and densely defined. This implies, in particular, that $A$ is $\rn$-maximal.
	\item Another consequence of Lumer-Phillips theorem establishes that $\{S(t)\}_{t\geq 0}$ extends to a strongly continuous group $\{S(t)\}_{t\in \R}$ and $\n{S(t)}\leq e^{\omega |t|}$ for all $t\in \R$ if and only if $A$ is $\rn$-maximal, densely defined and such that $\rn^c\subseteq \rho(A)$, where $\rn:=\{\lambda\in \C:|\Re \lambda| \leq \omega\}$.
	\item A more general case of point 3 and 4 is that
	\begin{equation}
	\label{pass_semigr_strip}
	e^{\omega_1 t}\leq \n{S(t)\xi}\leq e^{\omega_2 t}, \qquad  \forall t\geq 0,\forall \xi\in \H,\n{\xi}=1,
	\end{equation}
	for some $\omega_1\leq \omega_2$ if and only if $A$ is $\rn$-maximal and densely defined where $\rn:=\{\lambda\in \C:\omega_1 \leq \Re \lambda \leq \omega_2\}$. In fact we have for $\xi\in \H$ and $t\geq 0$, 
	$$
	2\Re \pin{AS(t)\xi}{S(t)\xi}=\frac{\partial}{\partial t} (\n{S(t) \xi}^2),
	$$
	i.e.,
	$$
	\Re \pin{AS(t)\xi}{S(t)\xi}=\n{S(t) \xi}\frac{\partial}{\partial t}\n{S(t) \xi},
	$$
	Hence, $A$ has numerical range in $\rn$ if and only if
	$$
	\omega_1 \n{S(t)\xi}\leq \frac{\partial}{\partial t} \n{S(t)\xi} \leq \omega_2 \n{S(t)\xi},
	$$
	i.e.,
	$
	Ce^{\omega_1 t}\leq \n{S(t)\xi} \leq Ce^{\omega_2 t},
	$
	for some $C\geq 0$. By $S(0)\xi=\xi$, we have $\n{\xi}e^{\omega_1 t}\leq \n{S(t)\xi}\leq \n{\xi}e^{\omega_2 t}$, for all $\xi \in \H, t\geq 0$. \\
	Since $A$ is a generator of a semi-group, then it is $\rn$-maximal and densely defined by \hyperref[th_car_strip]{Theorem} \ref{th_car_strip}. Moreover, $\rn^c\subseteq \rho(A)$ if and only if $\{S(t)\}_{t\geq 0}$ extends to a strongly continuous group, if and only if $S(t)$ has range $\H$ for all $t\geq 0$ (all $S(t)$ are injective by (\ref{pass_semigr_strip})).
	\item Assume that $\rn\subseteq (-\Set)$, where $\Set$ is a sector of $\C$. Thus, in particular, $-A$ is m-sectorial by \hyperref[th_car_n_max_set]{Theorem} \ref{th_car_n_max_set}, and hence $A$ generates a bounded holomorphic semi-group on $\H$ (see 
	\cite[Corollary 7.3.5]{Haase_b}).
\end{enumerate}

\no 	We can also state the following proposition that holds for a semi-group (resp. group) $\{S(t)\}_{t\geq 0}$ that does not satisfy condition (\ref{bound_semigroup}) (resp. (\ref{bound_group})) necessarily with $M=1$.	

\begin{pro}
	Let $A$ be an operator with numerical range $\rn_A\neq \C$.
	\begin{enumerate}
		\item If $A$ is the generator of a strongly continuous semi-group of bounded operators and $\rn_A$ does not contain any half-plane $\{\lambda\in \C:\Re\lambda > \omega\}$ with $\omega \geq 0$, then $A$ is $\rn_A$-maximal.
		\item If $A$ is the generator of a strongly continuous group of bounded operators, then $A$ is $\rn_A$-maximal.
	\end{enumerate}
\end{pro}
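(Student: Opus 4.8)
The plan is to deduce both statements from \hyperref[pro_rn_max_dens]{Proposition} \ref{pro_rn_max_dens}. In each case it suffices to exhibit a single point $\lambda_0\in\rn_A^c$ lying in the resolvent set $\rho(A)$: then $R(A-\lambda_0 I)=\H$, and since $\rn_A$ is nonempty (the domain of a semi-group generator is dense), convex, and proper ($\rn_A\neq\C$ by hypothesis), the proposition at once yields that $A$ is densely defined, closed and $\rn_A$-maximal. I will use only one elementary fact from convexity: for a convex set $\rn_A\subseteq\C$ one has $\mathrm{int}\,\ol{\rn_A}=\mathrm{int}\,\rn_A$ (if $\rn_A$ has empty interior it lies on a straight line, and then so does $\ol{\rn_A}$); consequently an \emph{open} half-plane contained in $\ol{\rn_A}$ is already contained in $\rn_A$.

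For statement 1, since $A$ generates a strongly continuous semi-group of bounded operators, (\ref{bound_semigroup}) provides $M\geq1$ and $\omega\geq0$ with $\n{S(t)}\leq Me^{\omega t}$, and the Hille--Yosida theory (\cite[Ch.~I]{Pazy}) then puts the half-plane $\{\lambda\in\C:\Re\lambda>\omega\}$ inside $\rho(A)$. I would argue that this half-plane cannot be contained in $\ol{\rn_A}$: otherwise, by the convexity fact above, it would be contained in $\rn_A$, contradicting the hypothesis that $\rn_A$ contains no half-plane $\{\Re\lambda>\omega'\}$ with $\omega'\geq0$ (apply it with $\omega'=\omega$). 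Hence some $\lambda_0$ with $\Re\lambda_0>\omega$ lies outside $\ol{\rn_A}$, so $\lambda_0\in\rn_A^c\cap\rho(A)$ and \hyperref[pro_rn_max_dens]{Proposition} \ref{pro_rn_max_dens} finishes the argument.

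For statement 2, $A$ generates a strongly continuous group, so (\ref{bound_group}) gives $M\geq1$, $\omega\geq0$ with $\n{S(t)}\leq Me^{\omega|t|}$ for all $t\in\R$. Applying the semi-group estimate to $A$ yields $\{\Re\lambda>\omega\}\subseteq\rho(A)$, and applying it to $-A$, the generator of the semi-group $\{S(-t)\}_{t\geq0}$, yields $\{\Re\mu>\omega\}\subseteq\rho(-A)=-\rho(A)$, i.e.\ $\{\Re\lambda<-\omega\}\subseteq\rho(A)$. If both open half-planes $\{\Re\lambda>\omega\}$ and $\{\Re\lambda<-\omega\}$ were contained in $\ol{\rn_A}$, then, since the convex hull of their union is all of $\C$, one would get $\ol{\rn_A}=\C$ and hence $\rn_A=\C$, contrary to assumption. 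Therefore at least one of the two half-planes meets $\ol{\rn_A}^c$, producing a point $\lambda_0\in\ol{\rn_A}^c\cap\rho(A)\subseteq\rn_A^c\cap\rho(A)$, and \hyperref[pro_rn_max_dens]{Proposition} \ref{pro_rn_max_dens} again concludes.

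The one point I expect to require care is the interplay between $\rn_A$ and $\ol{\rn_A}$: \hyperref[pro_rn_max_dens]{Proposition} \ref{pro_rn_max_dens} needs the chosen $\lambda_0$ to avoid $\rn_A$ itself, whereas the growth bounds naturally control $\rho(A)$ through half-planes that one first locates relative to $\ol{\rn_A}$. The identity $\mathrm{int}\,\ol{\rn_A}=\mathrm{int}\,\rn_A$ for convex sets is precisely what closes this gap; once it is in place, everything else is routine bookkeeping with the semi-group and group growth estimates and standard generation theory.
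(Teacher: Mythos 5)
Your proof is correct and takes essentially the same route as the paper: generation theory places a half-plane (two opposite ones, in the group case) inside $\rho(A)$, one locates a point of $\rn_A^c$ in it, and \hyperref[pro_rn_max_dens]{Proposition} \ref{pro_rn_max_dens} concludes. The detour through $\ol{\rn_A}$ and the identity $\mathrm{int}\,\ol{\rn_A}=\mathrm{int}\,\rn_A$ is harmless but not needed, since Proposition \ref{pro_rn_max_dens} only requires $\lambda_0\notin\rn_A$, and the hypothesis directly gives $H_\omega\not\subseteq\rn_A$ (respectively, convexity of $\rn_A$ itself and $\rn_A\neq\C$ give $(-H_\omega\cup H_\omega)\not\subseteq\rn_A$).
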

\begin{proof}
	\begin{enumerate}
		\item By \cite[Ch. II, Th. 3.8]{Engel-Nagel} the resolvent of $A$ contains the half-plane $H_\omega:=\{\lambda\in \C:\Re\lambda > \omega\}$, with a certain $\omega \geq 0$, and by hypothesis $H_\omega \cap \rn_A^c \neq \varnothing$. An application of \hyperref[pro_rn_max_dens]{Proposition} \ref{pro_rn_max_dens} shows that $A$ is $\rn_A$-maximal.
		\item 		This proof is analogous to the previous one. The difference is that the resolvent of $A$ contains the half-planes $-H_\omega$ and $H_\omega$, where $H_\omega:=\{\lambda\in \C:\Re\lambda > \omega\}$ and $\omega \geq 0$ (see \cite[Ch. II, Sect. 3]{Engel-Nagel}). The fact that $(-H_\omega \cup H_\omega) \cap \rn_A^c \neq \varnothing$ and \hyperref[pro_rn_max_dens]{Proposition} \ref{pro_rn_max_dens} imply that $A$ is $\rn_A$-maximal. \qedhere
	\end{enumerate}
\end{proof}

\no		\hyperref[lem_decomp_strip]{Lemma} \ref{lem_decomp_strip} establishes a decomposition of an operator in sum of real and imaginary parts. We mention \cite[Theorem 7.2.8]{Haase_b}, which states that if $A$ is the generator of a strongly continuous group $\{S(t)\}_{t\geq 0}$ with $\n{S(t)}\leq Me^{\omega_0 |t|}$, for all $t\in \R$, and $\omega>\omega_0$, then there exists a inner product $\pint_\circ$, inducing a norm $\nor_\circ$ equivalent to $\nor$, and with respect to $\pint_\circ$ the following statements hold:
\begin{enumerate}
	\item $A$ has numerical range in $\ol{\St_\omega}$ (i.e., $\pin{A\xi}{\xi}_\circ\in \ol{\St_\omega}$, for all $\xi \in \H$, $\n{\xi}_\circ=1$);
	\item denoting by $A^\circ$ the adjoint of $A$ with respect to $\pint_\circ$, we have $A=iB+C$ where
	\begin{itemize}
		\item $
		B=\frac{1}{2i}(A-A^\circ)$ and $C_{|D(A)}=\frac{1}{2}(A+A^\circ);
		$
		\item $B$ is self-adjoint and $D(B)=D(A)$;
		\item $C\in \B(\H)$ and it is symmetric.				
	\end{itemize}
\end{enumerate}
Since $A$ is the generator of a group, $A$ is $\ol{\St_\omega}$-maximal considering the inner product $\pint_\circ$. By \cite[Lemma C.5.2]{Haase_b}, we conclude the following (see also \cite[Theorem 2.4]{Delau}).

\begin{pro}
	The generator of a strongly continuous group of bounded operators is similar to a $\ol{\St_\omega}$-maximal operator, where $\ol{\St_\omega}$ is a horizontal closed strip.	
\end{pro}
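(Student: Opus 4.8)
The plan is to observe that almost all the work has already been carried out in the decomposition recalled just above, and that what remains is to translate ``$\ol{\St_\omega}$-maximal with respect to an equivalent inner product'' into ``similar to a $\ol{\St_\omega}$-maximal operator''. So, let $A$ be the generator of the strongly continuous group $\{S(t)\}_{t\in\R}$; by (\ref{bound_group}) choose $M\geq 1$ and $\omega_0\geq 0$ with $\n{S(t)}\leq Me^{\omega_0|t|}$ for all $t\in\R$, fix any $\omega>\omega_0$, and apply \cite[Theorem 7.2.8]{Haase_b}. This produces an inner product $\pint_\circ$ whose norm $\nor_\circ$ is equivalent to $\nor$ and such that, in the Hilbert space $\H_\circ:=(\H,\pint_\circ)$, the operator $A$ has numerical range in the closed strip $\ol{\St_\omega}$ and splits as $A=iB+C$ with $B$ self-adjoint, $D(B)=D(A)$, and $C\in\B(\H)$ symmetric (all adjoints computed in $\H_\circ$).

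The next step is to check that $A$, viewed on $\H_\circ$, is $\ol{\St_\omega}$-maximal. Being a $C_0$-group generator, $A$ is densely defined, so by \hyperref[th_car_strip]{Theorem} \ref{th_car_strip} it suffices to place a connected component of $\ol{\St_\omega}^c$ inside $\rho(A)$. I would get this from $A=iB+C$ by the same perturbation argument used in the proofs of \hyperref[th_car_strip]{Theorem} \ref{th_car_strip} and \hyperref[cor_strip_auto]{Corollary} \ref{cor_strip_auto}: $B$ self-adjoint makes $iB$ normal, with $\n{(iB-\lambda I)^{-1}}=\mathrm{dist}(\lambda,\sigma(iB))^{-1}$ away from $\sigma(iB)$, while the inclusion of the numerical range of $A$ in $\ol{\St_\omega}$ forces $\n{C}_\circ$ to be at most the half-width of $\ol{\St_\omega}$; a Neumann series (equivalently \cite[Theorem 5.11]{Weid}) then shows that the half-planes forming $\ol{\St_\omega}^c$ all lie in $\rho(A)$. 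Hence \hyperref[th_car_strip]{Theorem} \ref{th_car_strip} gives that $A$ is $\ol{\St_\omega}$-maximal on $\H_\circ$.

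Finally I would cash this in on $(\H,\pint)$. Equivalence of the norms gives a positive $Q\in\B(\H)$ with $Q^{-1}\in\B(\H)$ and $\pin{\xi}{\eta}_\circ=\pin{Q\xi}{\eta}$ for all $\xi,\eta$; then $Q^{1/2}\colon\H_\circ\to\H$ is unitary, so $\tilde A:=Q^{1/2}AQ^{-1/2}$, as an operator on $(\H,\pint)$, is unitarily equivalent to $A$ on $\H_\circ$ and therefore inherits both its numerical range and its maximality among extensions; thus $\tilde A$ is $\ol{\St_\omega}$-maximal on $(\H,\pint)$ and is similar to $A$ via the bounded, boundedly invertible $Q^{1/2}$. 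This last bookkeeping is exactly \cite[Lemma C.5.2]{Haase_b} (compare \cite[Theorem 2.4]{Delau}). I expect the only subtle point to be this transfer: the maximality among extensions survives any bijective similarity, but the numerical-range constraint built into ``$\ol{\St_\omega}$-maximal'' is preserved only by an honest unitary, so one must pass through $Q^{1/2}$ rather than through the purely topological identity $\H_\circ\to\H$; that is precisely what the cited lemma takes care of.
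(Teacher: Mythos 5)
Your overall architecture is the paper's: invoke \cite[Theorem 7.2.8]{Haase_b} to renorm $\H$ so that $A$ has numerical range in $\ol{\St_\omega}$ and decomposes there, argue that $A$ is $\ol{\St_\omega}$-maximal with respect to $\pint_\circ$, and then use \cite[Lemma C.5.2]{Haase_b} (via the unitary $Q^{1/2}\colon (\H,\pint_\circ)\to(\H,\pint)$) to turn the change of inner product into a similarity. Your closing observation --- that extension-maximality survives any similarity but the numerical-range constraint only survives a unitary, so one must route through $Q^{1/2}$ rather than the identity --- is exactly the content of that lemma, and that part is correct.

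The one place you diverge from the paper is the middle step, and there your argument, taken literally, does not close. With the decomposition as stated, $A=iB+C$ with $B$ self-adjoint (in general unbounded) and $C\in\B(\H)$ symmetric, one has $\sigma(iB)\subseteq i\R$; the two half-planes constituting $\ol{\St_\omega}^c$ are $\{\Im\lambda>\omega\}$ and $\{\Im\lambda<-\omega\}$, and the points of these half-planes lying on the imaginary axis are at distance $0$ from $\sigma(iB)$, so the bound $\n{(iB-\lambda I)^{-1}}=\mathrm{dist}(\lambda,\sigma(iB))^{-1}$ gives nothing and the Neumann series need not converge there. Likewise, $\rn_A\subseteq\ol{\St_\omega}$ controls $\Im\pin{A\xi}{\xi}_\circ=\pin{B\xi}{\xi}_\circ$, i.e.\ it bounds $B$, not $C$; it is not true that it forces $\n{C}_\circ\le\omega$. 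The perturbation argument you have in mind is the one from Theorem \ref{th_car_strip}, and it does work for the correctly oriented decomposition $A=B+iC$ with $B$ self-adjoint and $C$ bounded symmetric with $\n{C}_\circ\le\omega$ (the orientation mismatch is inherited from the way \cite[Theorem 7.2.8]{Haase_b} is transcribed, so the slip is understandable, but as written the step fails). The paper avoids the decomposition entirely at this point: since $A$ generates a $C_0$-group, its resolvent set contains the two half-planes $\{|\Re\lambda|>\omega'\}$ for some $\omega'\ge 0$, each of which meets $\ol{\St_\omega}^c$, so $R(A-\lambda I)=\H$ for some $\lambda\notin\ol{\St_\omega}$ and Proposition \ref{pro_rn_max_dens} yields the $\ol{\St_\omega}$-maximality with respect to $\pint_\circ$ in one line. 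Either adopt that shortcut or fix the orientation of the decomposition before running the perturbation argument.
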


\no	In several works, like \cite{Batty,Batty2,Delau,Haase_b,McIntosh_fc}, the authors defined a so-called {\it functional calculus} for a densely defined operator with spectrum contained in a subset $\rn$ which is a sector, a half-plane or a strip, and with resolvent operators satisfying some condition of boundedness. 	As particular case, it is possible to define a functional calculus for an operator $T$ with numerical range in $\rn$ and that satisfies $\ol{\rn}^c \subseteq \rho(T)$ (see \cite[Example 2.2.4, Section 2.3]{Batty} and \cite[Theorem 2.4]{Delau}).

\section{Correspondences with bounded operators}
\label{sec:corr}

It is worth mentioning that Phillips \cite{Phillips} proved \hyperref[car_accr]{Theorem}  \ref{car_accr} with the aid of the transform of an accretive operator $T$
\begin{equation}
\label{transf}
\tau(T)=(T-I)(T+I)^{-1},
\end{equation}
where $\tau(T)$ has domain $D(\tau(T))=R(T+I)$ and range $R(\tau(T))=R(T-I)$. Also von Neumann's \hyperref[car_simm]{Theorem} \ref{car_simm} can be proved with a similar map, more precisely with the {\it Cayley transform} of a symmetric operator $T$
$$
\kappa(T)=(T-iI)(T+iI)^{-1},
$$
with domain $D(\kappa(T))=R(T+iI)$ and range $R(\kappa(T))=R(T-iI)$ (see \cite[Ch. 13]{Schm}). 
Properties of transform (\ref{transf}) are settled in the next theorem.

\begin{teor}[{\cite[Sect. 1.1]{Phillips}}]
	\label{corr_trasf_accr}
	The transform $T\mapsto \tau(T)$ defines a one-to-one  correspondence, which preserves extensions, between all accretive operators on $\H$ and all contractions $J$ of $\H$ such that $I-J$ is invertible.\\
	In particular, the transform $T\mapsto \tau(T)$ defines a one-to-one correspondence between all densely defined, accretive operators on $\H$ and all contractions $J$ of $\H$ with $R(I-J)$ dense in $\H$.
\end{teor}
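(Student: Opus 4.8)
The plan is to prove Theorem~\ref{corr_trasf_accr} by writing down the inverse transform explicitly and checking that the two maps are mutually inverse and inclusion-preserving, keeping careful track of domains. Everything rests on the two algebraic identities
\[
(T+I)\xi+(T-I)\xi=2T\xi,\qquad (T+I)\xi-(T-I)\xi=2\xi,\qquad \xi\in D(T).
\]

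First I would check that $\tau$ sends accretive operators into the stated class. If $T$ is accretive then $T+I$ is injective, since $(T+I)\xi=0$ gives $0=\Re\pin{(T+I)\xi}{\xi}\geq\n{\xi}^2$; hence $\tau(T)=(T-I)(T+I)^{-1}$ is a single-valued linear operator with domain $R(T+I)$ and range $R(T-I)$. Putting $\eta=(T+I)\xi$, the first identity gives
\[
\n{\eta}^2-\n{\tau(T)\eta}^2=\n{(T+I)\xi}^2-\n{(T-I)\xi}^2=4\,\Re\pin{T\xi}{\xi}\geq0,
\]
so $\tau(T)$ is a contraction, and the second identity gives $(I-\tau(T))\eta=2\xi$, which shows simultaneously that $I-\tau(T)$ is injective and that $R(I-\tau(T))=D(T)$.

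Next I would define, for a contraction $J$ with $I-J$ injective, the operator $\sigma(J):=(I+J)(I-J)^{-1}$ on $D(\sigma(J))=R(I-J)$; for $\xi=(I-J)\zeta$ one has $\sigma(J)\xi=(I+J)\zeta$ and $\Re\pin{\sigma(J)\xi}{\xi}=\n{\zeta}^2-\n{J\zeta}^2\geq0$, so $\sigma(J)$ is accretive. Reading the two identities for $\sigma(J)$ in place of $T$ one gets $(\sigma(J)+I)\xi=2\zeta$ and $(\sigma(J)-I)\xi=2J\zeta$, whence $\tau(\sigma(J))=J$; and $\sigma(\tau(T))(2\xi)=(I+\tau(T))\eta=2T\xi$, whence $\sigma(\tau(T))=T$. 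So $\tau$ is a bijection onto the stated class. Inclusion‑preservation is transparent from these formulas: $T_1\sub T_2$ forces $R(T_1+I)\sub R(T_2+I)$ with the transforms agreeing on the smaller domain, so $\tau(T_1)\sub\tau(T_2)$, and symmetrically $J_1\sub J_2$ gives $\sigma(J_1)\sub\sigma(J_2)$; hence $T_1\sub T_2\iff\tau(T_1)\sub\tau(T_2)$. For the ``in particular'' statement, $R(I-\tau(T))=D(T)$ shows that $T$ is densely defined iff $R(I-\tau(T))$ is dense; and a contraction $J$ with $R(I-J)$ dense automatically has $I-J$ injective, because if $J\zeta=\zeta$, then expanding $\n{J(\zeta+s\eta)}^2\leq\n{\zeta+s\eta}^2$ over $\eta\in D(J)$ and all scalars $s$, and letting $s=-t\,\overline{\pin{(I-J)\eta}{\zeta}}$ with $t\downarrow0$, forces $\pin{(I-J)\eta}{\zeta}=0$ for all $\eta\in D(J)$, i.e. $\zeta\perp R(I-J)$, so $\zeta=0$. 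Restricting the bijection then gives the second correspondence.

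The hard part will not be any single idea but the domain bookkeeping: making sure $(T+I)^{-1}$, $(I-J)^{-1}$ and the two compositions are each single‑valued with precisely the claimed domains and ranges, and separating ``$I-J$ injective'' from ``$R(I-J)$ dense'' by the small perturbation argument above; once those are in place, the remaining verifications are immediate consequences of the two displayed identities.
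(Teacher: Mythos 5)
Your proof is correct; note that the paper gives no proof of this theorem at all, quoting it from Phillips. Your argument — exhibiting the explicit inverse $\sigma(J)=(I+J)(I-J)^{-1}$ on $R(I-J)$, deriving contractivity and accretivity from the identities $\n{(T+I)\xi}^2-\n{(T-I)\xi}^2=4\Re\pin{T\xi}{\xi}$ and $\Re\pin{(I+J)\zeta}{(I-J)\zeta}=\n{\zeta}^2-\n{J\zeta}^2$, and checking $R(I-\tau(T))=D(T)$ — is the standard one and is consistent with the computations the paper itself performs in Section \ref{sec:corr}; the one genuinely delicate point, that density of $R(I-J)$ for a (not necessarily everywhere defined) contraction forces $I-J$ to be injective, is handled correctly by your perturbation/orthogonality argument.
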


\no Let $T$ be an operator with domain $D(T)$ and numerical range contained in a proper, convex, subset $\rn$ of $\C$. We want to apply the method of the transform to $T$. Since $\rn$ is contained in a half-plane, then, up to linear transformation, we can assume that $\rn$ is contained in $\{\lambda \in \C: \Re \lambda\geq 0\}$  (i.e., we can assume that $T$ is accretive). Therefore, we can apply \hyperref[corr_trasf_accr]{Theorem} \ref{corr_trasf_accr}: the operator
$$
\tau(T)=(T-I)(T+I)^{-1}
$$ 
with domain $D(\tau(T))=R(T+I)$ and range $R(\tau(T))=R(T-I)$ is a contraction, $I-\tau(T)$ is invertible and  $T=(I+\tau(T))(I-\tau(T))^{-1}$. \\
In general, $\tau(T)$ has an additional property, i.e., from
$$
\pin{(I+\tau(T))(I-\tau(T))^{-1}\xi}{\xi}=\pin{T\xi}{\xi}\in \rn, \qquad \forall \xi \in D(T), \n{\xi}=1 
$$
it follows that
\begin{equation*}
\pin{(I+\tau(T))\eta}{(I-\tau(T))\eta}\in \rn, \qquad \forall \eta \in R(T+I), \n{(I-\tau(T))\eta}=1.
\end{equation*}
Now, let $K$ be an operator on $\H$ such that $I-K$ is invertible and
\begin{equation}
\label{cond_tras_rn_2}
\pin{(I+K)\eta}{(I-K)\eta} \in \rn, \qquad \forall \eta \in D(K),\; \n{(I-K)\eta}=1.
\end{equation}
We note that $K$ is in particular a contraction since $\rn\sub \{\lambda\in \C:\Re \lambda \geq0\}$. Thus, we have that the operator $T=(I+K)(I-K)^{-1}$ with domain $D(T)=R(I-K)$ is well-defined, has numerical range in $\rn$ and $\tau(T)=K$. Hence, \hyperref[corr_trasf_accr]{Theorem} \ref{corr_trasf_accr} has the following result as particular case.

\begin{teor}
	\label{corr_trasf_rn}
	Let $\rn$ be a proper, convex subset of the half-plane $\{\lambda \in \C: \Re \lambda\geq 0\}$ of  $\C$. Then the transform $T\mapsto \tau(T)$ defines a one-to-one  correspondence, which preserves extensions, between all operators on $\H$ with numerical range in $\rn$ and all the operators $K$ on $\H$ such that $I-K$ are invertible and satisfying \emph{(\ref{cond_tras_rn_2})}.\\
	In particular, the transform $T\mapsto \tau(T)$ defines a one-to-one correspondence between all densely defined operators on $\H$ with numerical range in $\rn$ and all operators $K$ on $\H$ satisfying \emph{(\ref{cond_tras_rn_2})} and with $R(I-K)$ dense in $\H$.
\end{teor}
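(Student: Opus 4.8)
The plan is to read this off \hyperref[corr_trasf_accr]{Theorem} \ref{corr_trasf_accr}, using that the hypothesis $\rn \sub \{\lambda \in \C : \Re \lambda \geq 0\}$ makes every operator with numerical range in $\rn$ accretive. First I would let $T$ be an operator on $\H$ with $\rn_T \sub \rn$. By \hyperref[corr_trasf_accr]{Theorem} \ref{corr_trasf_accr} its transform $K := \tau(T) = (T-I)(T+I)^{-1}$ is a contraction with $I - K$ invertible and $T = (I+K)(I-K)^{-1}$, $D(T) = R(I-K)$. Every unit vector $\xi \in D(T)$ is of the form $\xi = (I-K)\eta$ with $\eta \in D(K)$, and then $T\xi = (I+K)\eta$; substituting this into $\pin{T\xi}{\xi} \in \rn$ produces exactly condition (\ref{cond_tras_rn_2}) for $K$ --- this is the computation already displayed before the statement. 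So $\tau$ carries the operators with numerical range in $\rn$ into the family of operators $K$ with $I-K$ invertible and satisfying (\ref{cond_tras_rn_2}).

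Conversely, I would start from such a $K$. As noted right after (\ref{cond_tras_rn_2}), the condition forces $\Re \pin{(I+K)\eta}{(I-K)\eta} = \n{\eta}^2 - \n{K\eta}^2 \geq 0$ for all $\eta \in D(K)$ (by homogeneity in $\eta$, with $\eta = 0$ trivial), so $K$ is a contraction; hence \hyperref[corr_trasf_accr]{Theorem} \ref{corr_trasf_accr} applies and gives $K = \tau(T)$ for the unique accretive $T := (I+K)(I-K)^{-1}$ with $D(T) = R(I-K)$. Reading the substitution $\xi = (I-K)\eta$ backwards turns (\ref{cond_tras_rn_2}) into $\pin{T\xi}{\xi} \in \rn$ for all unit $\xi \in D(T)$, i.e. $\rn_T \sub \rn$. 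Thus $\tau$ and $K \mapsto (I+K)(I-K)^{-1}$ restrict to mutually inverse maps between the two families, and injectivity together with preservation of extensions is inherited verbatim from \hyperref[corr_trasf_accr]{Theorem} \ref{corr_trasf_accr}.

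For the ``in particular'' clause I would only add that $T$ is densely defined exactly when $D(T) = R(I-K)$ is dense in $\H$; combining this equivalence with the first part and with the corresponding refinement of \hyperref[corr_trasf_accr]{Theorem} \ref{corr_trasf_accr} (densely defined accretive operators in correspondence with contractions having dense $R(I-J)$) yields the stated bijection. I do not anticipate a genuine obstacle: all the analytic content sits in \hyperref[corr_trasf_accr]{Theorem} \ref{corr_trasf_accr}, and the only points needing care are bookkeeping ones --- matching the normalizations $\n{\xi} = 1$ and $\n{(I-K)\eta} = 1$ through the homogeneity of (\ref{cond_tras_rn_2}), and checking that (\ref{cond_tras_rn_2}) is precisely the extra constraint that $\rn_T \sub \rn$, rather than mere accretivity, imposes on $\tau(T)$.
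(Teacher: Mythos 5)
Your proposal is correct and follows essentially the same route as the paper: the author also derives the theorem as a particular case of \hyperref[corr_trasf_accr]{Theorem} \ref{corr_trasf_accr}, obtaining condition (\ref{cond_tras_rn_2}) by the substitution $\xi=(I-K)\eta$ and noting that (\ref{cond_tras_rn_2}) forces $K$ to be a contraction because $\rn\sub\{\lambda\in\C:\Re\lambda\geq 0\}$. Your explicit homogeneity remark justifying the contraction property for all $\eta\in D(K)$ is a small but welcome elaboration of a step the paper leaves implicit.
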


\begin{cor}
	\label{cor_max_rn_iso}
	An operator $T$ on $\H$ with numerical range $\rn$ is  $\rn$-maximal if and only if the operator $\tau(T)=(T-I)(T+I)^{-1}$ is maximal in the set of operators $K$ satisfying \emph{(\ref{cond_tras_rn_2})} and such that $I-K$ are invertible.
\end{cor}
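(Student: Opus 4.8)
The plan is to read the statement off Theorem~\ref{corr_trasf_rn}. Throughout, $\rn$ is as in that theorem, i.e.\ a proper, convex subset of the half-plane $\{\lambda\in\C:\Re\lambda\geq0\}$, so that each operator $T$ with $\rn_T\subseteq\rn$ is accretive and $\tau(T)=(T-I)(T+I)^{-1}$ is well defined; a general $\rn$ contained in some half-plane is brought to this situation by an affine change of variables, which affects neither the numerical-range inclusions nor the maximality and conjugates $\tau$ into the corresponding transform. Write $\mathcal{A}$ for the family of operators on $\H$ with numerical range contained in $\rn$, and $\mathcal{K}$ for the family of operators $K$ on $\H$ with $I-K$ invertible and satisfying (\ref{cond_tras_rn_2}), both partially ordered by operator inclusion.

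I would first unwind Definition~\ref{def_n_max}: an operator $T$ with $\rn_T\subseteq\rn$ is $\rn$-maximal exactly when it is a maximal element of $(\mathcal{A},\subseteq)$ --- condition~1 says $T\in\mathcal{A}$, and condition~2 says that no member of $\mathcal{A}$ properly extends $T$. Likewise, being maximal in the set of operators $K$ with $I-K$ invertible and satisfying (\ref{cond_tras_rn_2}) means being a maximal element of $(\mathcal{K},\subseteq)$, which is precisely the conclusion of the corollary.

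The key step is that Theorem~\ref{corr_trasf_rn} gives more than a bijection: $\tau\colon\mathcal{A}\to\mathcal{K}$ preserves extensions, and its inverse $K\mapsto(I+K)(I-K)^{-1}$ visibly does as well, so $\tau$ is an order isomorphism. Hence for $T,T'\in\mathcal{A}$ we have $T\subseteq T'$ if and only if $\tau(T)\subseteq\tau(T')$, and, $\tau$ being injective, $T\subsetneq T'$ if and only if $\tau(T)\subsetneq\tau(T')$. Thus $T$ admits no proper extension in $\mathcal{A}$ if and only if $\tau(T)$ admits no proper extension in $\mathcal{K}$, which by the previous paragraph is exactly the desired equivalence. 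No real obstacle arises: the only point worth attention is that $\rn$-maximality is a maximality (not a maximum) condition, so that a bare order-isomorphism argument is all that is needed, and --- if one insists on the full generality of $\rn$ lying in an arbitrary half-plane --- that the preliminary affine substitution intertwines the two transforms, which is immediate from the discussion preceding Theorem~\ref{corr_trasf_rn}.
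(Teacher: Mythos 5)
Your proposal is correct and matches the paper's (implicit) argument: the corollary is stated without proof precisely because it follows at once from the fact that $\tau$ in Theorem~\ref{corr_trasf_rn} is an extension-preserving bijection with extension-preserving inverse, hence an order isomorphism carrying maximal elements to maximal elements. Your unwinding of Definition~\ref{def_n_max} and the remark about the preliminary affine reduction are exactly the intended reading.
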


\no	For particular subsets $\rn$ (\ref{cond_tras_rn_2}) can be simplified. First of all, let us note that 
$$ \Re \pin{(I+K)\eta}{(I-K)\eta}=\n{\eta}^2-\n{K\eta}^2 $$ 
$$ \Im \pin{(I+K)\eta}{(I-K)\eta} =\frac{1}{i}  (\pin{K\eta}{\eta}-\pin{\eta}{K\eta})=2\Im \pin{K\eta}{\eta},$$
for all $\eta \in D(K)$. Moreover, for a positive homogeneous subset $\rn$ (i.e., such that $\mu \rn = \rn$ for all $\mu >0$) condition (\ref{cond_tras_rn_2}) is equivalent to 
$$
\pin{(I+K)\eta}{(I-K)\eta} \in \rn, \qquad \forall \eta \in D(K).
$$
\begin{itemize}
	\item If $\rn=\{\lambda\in \C:\Re \lambda \geq 0, \Im \lambda \geq 0 \}$, then (\ref{cond_tras_rn_2})  holds if and only if
	$$
	\n{K\eta} \leq \n{\eta} \text{ and } \Im \pin{K\eta}{\eta}\geq 0 \text{ for all } \eta \in D(K),
	$$
	i.e., if and only if $K$ is a contraction with numerical range in the upper semi-plane of $\C$. 
	\item If $\alpha> 0$ and $\rn=\{\lambda\in \C:0\leq \Re \lambda \leq \alpha\}$, then (\ref{cond_tras_rn_2})  holds if and only if
	$$
	0\leq \n{\eta}^2-\n{K\eta}^2\leq \alpha
	$$ 
	for all  $\eta \in D(K),\n{(I-K)\eta}=1$. This condition is equivalent to 
	$$
	0\leq \n{\eta}^2-\n{K\eta}^2\leq \alpha \n{(I-K)\eta}^2  \text{ for all } \eta \in D(K).
	$$ 
	If, moreover, $\alpha=1$, then (\ref{cond_tras_rn_2})  holds if and only if
	$$
	0\leq \n{\eta}^2-\n{K\eta}^2\leq \n{(I-K)\eta}^2=\n{\eta}^2-2\Re\pin{K\eta}{\eta}+\n{K\eta}^2
	$$  
	for all $\eta \in D(K)$, i.e.,
	$$
	\n{K\eta} \leq \n{\eta} \text{ and } \Re\pin{K\eta}{\eta}\leq \n{K\eta}^2  \text{ for all } \eta \in D(K).
	$$
	\item If $\rn=\{\lambda\in \C:\Re \lambda =0 \}$, then (\ref{cond_tras_rn_2}) holds if and only if
	$\n{K\eta} = \n{\eta}$ for all $\eta \in D(K)$, i.e., if and only if $K$ is an isometry.\\
	This case is not surprising since we have, up to a rotation, exactly the Cayley transform of a symmetric operator (see \cite[Theorem 13.5]{Schm}).
	\item If $\rn=\{\lambda\in \C:\Re \lambda \geq 0, \Im \lambda =0\}$, then (\ref{cond_tras_rn_2}) holds if and only if
	$$
	\n{K\eta} \leq \n{\eta} \text{ and } \pin{K\eta}{\eta}=\pin{\eta}{K\eta} \text{ for all } \eta \in D(K),
	$$
	i.e., if and only if $K$ is a symmetric contraction.\\
	In this case, the correspondence of \hyperref[corr_trasf_rn]{Theorem} \ref{corr_trasf_rn} is that given by Proposition 13.22 of  \cite{Schm}, and the mapping $T\mapsto \tau(T)$ is called {\it Krein transform}.
	\item If $\rn$ is a sector $\rn=\{\lambda\in \C:|\arg(\lambda)|\leq \theta\}$, where $0<\theta<\frac{\pi}{2}$, then (\ref{cond_tras_rn_2})  holds if and only if $\n{\sin(\theta) K\eta  \pm i  \cos(\theta) \eta}\leq \n{\eta}$, for all $\eta\in D(K)$. \\
	In particular, if $D(K)=\H$ then (\ref{cond_tras_rn_2}) is equivalent to
	\begin{equation}
	\label{K_sect}
	\n{\sin(\theta) K \pm i \cos(\theta)I }\leq 1.
	\end{equation}
	The class $C(\theta)$ of operators $K\in \B(\H)$ satisfying (\ref{K_sect}) has been studied in \cite{Arlinski,Arlinski2,Kolmanovich}. It
	has been used in descriptions of maximal sectorial extensions of sectorial operators and in the study of one-parameter semigroups of contractions  $U(t)=\exp(-t T)$, $t\ge 0$, generated by maximal sectorial operators $T$. 
\end{itemize}

\section{Operators associated to solvable sesquilinear forms}
\label{sec:solv}	

In this section we deal with the $\rn$-maximality of operators associated to sesquilinear forms. In particular, we work with {\it solvable} forms, that have been studied in \cite{Tp_DB,RC_CT,Second}. For reader's convenience we recall some important notions and results about them.\\

\no 	We assume that $\D$ is a dense subspace of $\H$ and we denote by $\iota$ the sesquilinear form which corresponds to the inner product, i.e.,  $\iota(\xi,\eta)=\pin{\xi}{\eta}$, with $\xi,\eta \in \H$.	A sesquilinear form $\O$ on $\D$ is called {\it  q-closed with respect to} a norm on $\D$ denoted by $\noo$ if
\begin{enumerate}
	\item there exists $\alpha>0$ such that $\n{\xi}\leq \alpha \n{\xi}_\O$, for all $\xi \in \D$, i.e., the embedding  $\D[\noo]\to \H$ is continuous;
	\item $\D[\noo]$ is a reflexive Banach space;
	\item there exists $\beta >0$ such that $|\O(\xi,\eta)|\leq \beta\n{\xi}_\O\n{\eta}_\O$, for all  $\xi,\eta \in \D$, i.e., $\O$ is bounded on $\D[\noo]$.\\
\end{enumerate}

\no	Let $\O$ be a q-closed sesquilinear form on $\D$ with respect to a norm $\noo$ and $\Eo:=\D[\noo]$. Let $\Eo^\times$ be the conjugate dual of $\Eo$.	If the set $\Po(\O)$ of bounded sesquilinear forms $\Up$ on $\H$ satisfying
\begin{enumerate}
	\item if $(\O+\Up)(\xi,\eta)=0$ for all $\eta\in \D$ then $\xi=0$;
	\item for all $\L\in \Eo^\times $ there exists $\xi\in \Eo$ such that the action of $\L$ on $\xi$ is given by
	$
	\pin{\L}{\eta}=(\O+\Up)(\xi,\eta)$,  for all $\eta \in \Eo,
	$
\end{enumerate}
is not empty, then $\O$ is said to be {\it solvable with respect to}  $\noo$.\\ 		

\no	The following result gives the representation theorem of solvable forms, whose first version is in \cite{Tp_DB}.

\begin{teor}[{\cite[Theorem 4.6]{RC_CT}, \cite[Theorem 2.7]{Second}}]
	\label{th_rapp_risol}
	Let $\O$ be a solvable sesquilinear form on $\D$ with respect to a norm $\noo$. Then there exists a closed operator $T$, with dense domain $D(T)\subseteq \D$ in $\H$, such that the following statements hold.
	\begin{enumerate}
		\item $\O(\xi,\eta)=\pin{T\xi}{\eta},$ for all $\xi\in D(T),\eta \in \D$.
		\item $D(T)$ is dense in $\D[\noo]$.
		\item A bounded form $\Up(\cdot,\cdot)=\pin{B\cdot}{\cdot}$ belongs to $\Po(\O)$ if and only if
		$0\in \rho(T+B)$. In particular, if $\Up=-\lambda \iota$, with $\lambda \in \C$, then $\Up\in \Po(\O)$ if and only if $\lambda \in \rho(T)$.
	\end{enumerate}
	The operator $T$ is uniquely determined by the following condition. Let $\xi,\chi\in \H$. Then $\xi\in D(T)$ and $T\xi=\chi$ if and only if $\xi\in \D$ and $\O(\xi,\eta)=\pin{\chi}{\eta}$ for all $\eta$ belonging to a dense subset of $\D[\noo]$.
\end{teor}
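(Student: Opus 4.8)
The plan is to realize $\O$ inside the rigged triple $\Eo\hookrightarrow\H\hookrightarrow\Eo^\times$ and to lean on a single fixed element $\Up_0\in\Po(\O)$, written $\Up_0(\cdot,\cdot)=\pin{B_0\cdot}{\cdot}$ with $B_0\in\B(\H)$. First I would collect the properties of the canonical map $\iota_\H\colon\H\to\Eo^\times$, $\chi\mapsto(\eta\mapsto\pin{\chi}{\eta})$: it is linear, bounded (continuity of the embedding $\Eo\hookrightarrow\H$), injective (density of $\D$ in $\H$) and of dense range (here the reflexivity of $\Eo$ enters, via Hahn--Banach: a functional on $\Eo^\times$ killing $\iota_\H(\H)$ is represented by some $\zeta\in\Eo$, and testing against $\chi=\zeta$ forces $\zeta=0$). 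Since $\O+\Up_0$ is bounded on $\Eo$ it induces a bounded operator $\mathcal A_0\colon\Eo\to\Eo^\times$ with $\pin{\mathcal A_0\xi}{\eta}=(\O+\Up_0)(\xi,\eta)$; because $\Eo$ and $\D$ coincide as sets, the two defining conditions of $\Po(\O)$ say precisely that $\mathcal A_0$ is injective and surjective, so by the open mapping theorem $\mathcal A_0$ is a homeomorphism with bounded inverse.

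Next I would take for $T$ the operator associated to $\O$ as in (\ref{eq_dom_intro}). The crucial elementary identity is that, as sets, $D(T)=\mathcal A_0^{-1}(\iota_\H(\H))$, with $T\xi=\iota_\H^{-1}(\mathcal A_0\xi)-B_0\xi$: indeed $\O(\xi,\eta)=\pin{\chi}{\eta}$ for all $\eta\in\D$ is equivalent to $\mathcal A_0\xi=\iota_\H(\chi+B_0\xi)$. Since $\iota_\H(\H)$ is dense in $\Eo^\times$ and $\mathcal A_0^{-1}$ is a homeomorphism, $D(T)$ is dense in $\Eo$ — this is statement~2 — hence dense in $\H$. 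The same identity shows that $T+B_0$, which is the operator associated to $\O+\Up_0$, satisfies $(T+B_0)^{-1}=\mathcal A_0^{-1}\iota_\H$; this is bounded and everywhere defined, so $0\in\rho(T+B_0)$, and therefore $T$, equal to a closed operator plus a bounded one, is closed. Statement~1 is just the definition of the associated operator.

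For statement~3 I would first record the general fact that for any bounded form $\Up(\cdot,\cdot)=\pin{B\cdot}{\cdot}$ the operator associated to $\O+\Up$ is $T+B$ with domain $D(T)$. Hence, if $\Up\in\Po(\O)$, the argument of the previous paragraph applied with $\Up$ in place of $\Up_0$ gives $0\in\rho(T+B)$. Conversely, assume $0\in\rho(T+B)$. The injectivity halves of the $\Po(\O)$-conditions are immediate: if $(\O+\Up)(\xi,\eta)=0$ for all $\eta\in\D$ then $\xi\in D(T)$ and $(T+B)\xi=0$, so $\xi=0$. For the solvability half I would compare with the reference form: writing $\mathcal A$ for the operator of $\O+\Up$ on $\Eo$, one has $\mathcal A=\mathcal A_0+\mathcal C$ with $\mathcal C=\iota_\H\circ(B-B_0)\circ j$ ($j\colon\Eo\to\H$ the inclusion), so $\mathcal A=\mathcal A_0(I+\mathcal K)$ where $\mathcal K:=\mathcal A_0^{-1}\mathcal C=(T+B_0)^{-1}(B-B_0)j$ is bounded on $\Eo$ and has range inside $D(T)\subseteq\Eo$. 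It then remains to see that $I+\mathcal K$ is bijective on $\Eo$. Injectivity goes as before (if $(I+\mathcal K)\xi=0$ then $\xi\in D(T)$ and $(T+B)\xi=0$). For surjectivity, given $\psi\in\Eo$ I would exhibit the explicit solution $\xi:=\psi-(T+B)^{-1}(B-B_0)\psi$, which lies in $\Eo$ because $(T+B)^{-1}(B-B_0)\psi\in D(T)\subseteq\Eo$, and verify $(I+\mathcal K)\xi=\psi$ by a short computation using only $(B-B_0)\zeta=(T+B)\zeta-(T+B_0)\zeta$ for $\zeta\in D(T)$. Thus $\mathcal A=\mathcal A_0(I+\mathcal K)$ is a homeomorphism $\Eo\to\Eo^\times$, i.e. $\Up\in\Po(\O)$; the displayed special case is $B=-\lambda I$.

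Finally, for the uniqueness criterion I would observe that, $D(T)$ being dense in $\Eo$ and both $\eta\mapsto\O(\xi,\eta)$ and $\eta\mapsto\pin{\chi}{\eta}$ being continuous on $\Eo$ (the latter by continuity of $\Eo\hookrightarrow\H$), an equality $\O(\xi,\eta)=\pin{\chi}{\eta}$ on any dense subset of $\Eo$ propagates to all of $\D$; this is exactly the stated condition, and it exhibits $T$ as depending only on $\O$ and $\D$, not on the auxiliary norm nor on the choice of $\Up_0$. The one genuinely non-formal step is the surjectivity of $I+\mathcal K$ — that is, pushing the Hilbert-space information $0\in\rho(T+B)$ back into the solvability condition living on the triple $\Eo\hookrightarrow\H\hookrightarrow\Eo^\times$; once the explicit $\xi$ above is written down, the rest is bookkeeping with the open mapping theorem.
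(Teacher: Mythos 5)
Your proof is correct, and it reconstructs essentially the argument of the cited sources: the paper itself gives no proof of this theorem (it is imported verbatim from \cite[Theorem 4.6]{RC_CT} and \cite[Theorem 2.7]{Second}), and those proofs run exactly through the bounded bijection $\Eo\to\Eo^\times$ induced by $\O+\Up_0$, the identification $D(T)=\mathcal{A}_0^{-1}(\iota_{\H}(\H))$ with $(T+B_0)^{-1}=\mathcal{A}_0^{-1}\iota_{\H}$, and the perturbation argument for a general $\Up\in\Po(\O)$. The only points worth making explicit in a write-up are the well-posedness of $T\xi=\chi$ (uniqueness of $\chi$ from density of $\D$ in $\H$) and the conjugate-duality conventions in the reflexivity step, both of which are routine.
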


\no The operator $T$ in  \hyperref[th_rapp_risol]{Theorem} \ref{th_rapp_risol} is called {\it associated} to $\O$.

\begin{pro}[{\cite[Proposition 4.13]{RC_CT}}]
	The numerical range of the operator associated to a solvable sesquilinear form is a dense subset of the numerical range of the form.
\end{pro}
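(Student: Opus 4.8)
The plan is to establish the two inclusions $\rn_T \sub \rn_\O$ and $\rn_\O \sub \ol{\rn_T}$, where $T$ denotes the operator associated to the solvable form $\O$ via \hyperref[th_rapp_risol]{Theorem} \ref{th_rapp_risol}; together these say exactly that $\rn_T$ is a dense subset of $\rn_\O$.

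First I would record the easy inclusion $\rn_T \sub \rn_\O$: for any $\xi \in D(T)$ with $\n{\xi}=1$ one has $\xi \in \D$ (since $D(T) \sub \D$), and statement 1 of \hyperref[th_rapp_risol]{Theorem} \ref{th_rapp_risol} applied with $\eta = \xi$ gives $\pin{T\xi}{\xi} = \O(\xi,\xi) \in \rn_\O$. The heart of the matter is the reverse inclusion, which is a density argument. Given $\xi \in \D$ with $\n{\xi}=1$, I would use statement 2 of \hyperref[th_rapp_risol]{Theorem} \ref{th_rapp_risol} --- that $D(T)$ is dense in $\Eo := \D[\noo]$ --- to pick a sequence $\{\xi_n\} \sub D(T)$ with $\n{\xi_n - \xi}_\O \to 0$. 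The continuity of the embedding $\Eo \to \H$ (part of q-closedness) then forces $\n{\xi_n} \to \n{\xi} = 1$, so for large $n$ I may renormalize and set $\zeta_n := \xi_n/\n{\xi_n} \in D(T)$ with $\n{\zeta_n}=1$; since $\n{\zeta_n - \xi_n}_\O = |\n{\xi_n}^{-1}-1|\,\n{\xi_n}_\O \to 0$ (because $\n{\xi_n}_\O \to \n{\xi}_\O$ stays bounded), we still have $\n{\zeta_n - \xi}_\O \to 0$.

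Finally I would invoke the boundedness of $\O$ on $\Eo$ (again part of q-closedness), writing
$$
|\O(\zeta_n,\zeta_n) - \O(\xi,\xi)| \leq |\O(\zeta_n - \xi,\zeta_n)| + |\O(\xi,\zeta_n - \xi)| \leq \beta\, \n{\zeta_n - \xi}_\O\big(\n{\zeta_n}_\O + \n{\xi}_\O\big),
$$
where $\beta$ is the bound of $\O$ on $\Eo$; the right-hand side tends to $0$ since $\n{\zeta_n}_\O$ is bounded. As $\zeta_n \in D(T)$ and $\n{\zeta_n}=1$, each $\O(\zeta_n,\zeta_n) = \pin{T\zeta_n}{\zeta_n}$ lies in $\rn_T$, whence $\O(\xi,\xi) \in \ol{\rn_T}$; since $\xi$ was arbitrary this yields $\rn_\O \sub \ol{\rn_T}$, completing the proof.

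The argument carries no serious obstacle --- it is a routine density-and-continuity chase once \hyperref[th_rapp_risol]{Theorem} \ref{th_rapp_risol} is in hand. The one point that needs care is the renormalization: I must produce \emph{unit} vectors in $D(T)$ approximating $\xi$ in the $\O$-norm, and this works precisely because continuity of the embedding $\Eo \to \H$ forces $\n{\xi_n} \to 1$ while the $\O$-norms of the approximants remain bounded.
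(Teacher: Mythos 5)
Your proof is correct: the inclusion $\rn_T\sub\rn_\O$ is immediate from the representation $\O(\xi,\xi)=\pin{T\xi}{\xi}$ on $D(T)$, and your density argument (density of $D(T)$ in $\D[\noo]$, continuity of the embedding $\D[\noo]\to\H$ to justify the renormalization, and boundedness of $\O$ on $\D[\noo]$ to pass to the limit) is exactly the standard route to $\rn_\O\sub\ol{\rn_T}$. The paper itself states this proposition without proof, citing \cite[Proposition 4.13]{RC_CT}, and your argument is the natural one behind that reference; no gaps.
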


\no	Many sesquilinear forms studied in the literature are solvable (we refer to Section 7 of \cite{RC_CT}). In particular, the forms considered by Kato \cite[Theorem VI.2.1]{Kato} and McIntosh \cite[Theorem 3.1]{McIntosh68} are solvable (see \cite[Example 5.8]{Tp_DB} and \cite[Theorem 7.8]{RC_CT}).\\  Kato and McIntosh's theorems establish also that the associated operators are maximal sectorial and maximal accretive, respectively.
Hence, a natural question arises: is the operator associated to a solvable form with numerical range contained in $\rn$ (different from $\C$) $\rn$-maximal? 
By \cite[Corollary 4.14]{RC_CT}, the operators associated to symmetric solvable forms are self-adjoint, then, in particular, maximal symmetric. In the following we formulate other results on maximality of the associated operators.

\begin{teor}
	\label{th_op_ass_max}
	Let $\rn$ be a proper, convex subset of $\C$ and let $\O$ be a solvable sequilinear form on $\D$, with numerical range $\rn_\O\sub \rn$ and associated operator $T$. If a sesquilinear form $\Up\in \Po(\O)$ has numerical range $\rn_\Up$ such that $\rn \cap (-\rn_\Up)=\varnothing$, then $T$ is $\rn$-maximal.		
	In particular, if there exists $\lambda\in \rn^c$ such that $-\lambda \iota\in \mathfrak{P}(\O)$, then $T$ is $\rn$-maximal.
\end{teor}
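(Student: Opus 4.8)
The plan is to verify the two conditions of Definition \ref{def_n_max} for $T$. The first one is immediate: by the representation in item~1 of Theorem \ref{th_rapp_risol}, for every $\xi \in D(T) \sub \D$ with $\n{\xi}=1$ we have $\pin{T\xi}{\xi} = \O(\xi,\xi) \in \rn_\O \sub \rn$, so $\rn_T \sub \rn$ (this also follows from the Proposition stating that $\rn_T$ is dense in $\rn_\O$). For the second condition, fix $\Up \in \Po(\O)$ as in the hypothesis and write $\Up(\cdot,\cdot) = \pin{B\cdot}{\cdot}$ with $B \in \B(\H)$. By item~3 of Theorem \ref{th_rapp_risol} we get $0 \in \rho(T+B)$; in particular $T+B$ is injective and $R(T+B) = \H$. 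Now let $T'$ be any operator with $T \sub T'$ and $\rn_{T'} \sub \rn$; since $B$ is everywhere defined, $T'+B$ has domain $D(T')$ and $T+B \sub T'+B$.

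The key step will be to prove that $T'+B$ is injective. Suppose $\xi' \in D(T')$ satisfies $(T'+B)\xi'=0$ but $\xi' \neq 0$; after normalization assume $\n{\xi'}=1$. Then
$$
0 = \pin{(T'+B)\xi'}{\xi'} = \pin{T'\xi'}{\xi'} + \pin{B\xi'}{\xi'},
$$
hence $\pin{T'\xi'}{\xi'} = -\pin{B\xi'}{\xi'}$. But $\pin{T'\xi'}{\xi'} \in \rn_{T'} \sub \rn$ while $\pin{B\xi'}{\xi'} \in \rn_\Up$, so $\pin{T'\xi'}{\xi'}$ would lie in $\rn \cap (-\rn_\Up) = \varnothing$, which is absurd. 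Therefore $T'+B$ is injective.

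It then remains to combine the two facts: $T+B$ is surjective, $T'+B$ is injective, and $T+B \sub T'+B$. Given $\xi' \in D(T')$, choose $\xi \in D(T)$ with $(T+B)\xi = (T'+B)\xi'$; since $T+B \sub T'+B$ this reads $(T'+B)\xi = (T'+B)\xi'$, so injectivity of $T'+B$ forces $\xi = \xi' \in D(T)$. Hence $D(T')=D(T)$ and $T'=T$, which is exactly condition~2 of Definition \ref{def_n_max}; thus $T$ is $\rn$-maximal. For the final assertion, take $\Up = -\lambda\iota$: its numerical range is the singleton $\{-\lambda\}$, so $-\rn_\Up = \{\lambda\} \sub \rn^c$ and $\rn \cap (-\rn_\Up) = \varnothing$, and the statement just proved applies; alternatively, item~3 of Theorem \ref{th_rapp_risol} gives $\lambda \in \rho(T)$, so $R(T-\lambda I)=\H$ with $\lambda \in \rn^c$ and one may quote Proposition \ref{pro_rn_max_dens} directly. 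I do not anticipate a genuine obstruction here; the one delicate point is that the injectivity step must be carried out for a completely arbitrary extension $T'$ (not assumed closed or densely defined), which is precisely why the hypothesis placed on $\rn_{T'}$ — rather than any structural assumption on $T'$ — is the right tool to exploit.
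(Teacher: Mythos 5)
Your proof is correct and follows essentially the same route as the paper: passing to the bounded operator $B$ representing $\Up$, using item~3 of Theorem \ref{th_rapp_risol} to get that $T+B$ is a bijection, and deducing injectivity of $T'+B$ from $\rn\cap(-\rn_\Up)=\varnothing$, whence $T'=T$. You merely spell out the standard ``surjective restriction of an injective extension'' step and the numerical range of $-\lambda\iota$, which the paper leaves implicit.
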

\begin{proof}
	The numerical range $\rn_T$ of $T$ is contained in $\rn$. Let $B$ the  operator associated to $\Up$ and $\rn_B$ be the numerical range of $B$. By \hyperref[th_rapp_risol]{Theorem} \ref{th_rapp_risol}, $T+B$ is a bijection. 
	Let $T'$ be an extension of $T$ with numerical range contained in $\rn$. Thus $T'+B$ is injective, because $\rn_{T'} \subseteq \rn$, $\rn_B=\rn_\Up$ and $\rn \cap (-\rn_\Up)=\varnothing$. 
	Consequently $T=T'$, i.e., $T$ is $\rn$-maximal.
\end{proof}

\begin{cor}
	\label{cor_form_n_max}
	Let $\rn$ be a proper, convex subset of $\C$ and let $\O$ be a q-closed sequilinear form on $\D$, with numerical range $\rn_\O\sub \rn$. Assume that one of the following statements holds.
	\begin{enumerate}
		\item[(i)] If $\{\xi_n\}$ is a sequence in $\D$ such that $\displaystyle \lim_{n\to \infty }\n{\xi_n}= 0$ and $\displaystyle\lim_{n\to \infty} |\O(\xi_n,\xi_n)|= 0$, then $\displaystyle\lim_{n\to \infty} \n{\xi_n}_\O=0$. 
		\item[(ii)] There exists a bounded form $\Up$ on $\H$ such that $\rn_\O\cap (-\rn_{\Up})=\varnothing$, where $\rn_{\Up}$ is the numerical range of $\Up$, and (ii') or (ii'') below holds
		\begin{enumerate}
			\item[(ii')] if $\{\xi_n\}$ is a sequence in $\D$ such that $\displaystyle \sup_{\n{\eta}_\O=1} |(\O+\Up)(\xi_n,\eta)|\to 0$, then $ \n{\xi_n}_\O\to 0$;
			\item[(ii'')] there exists a constant $c>0$ such that
			$$ c\n{\xi}_\O\leq \sup_{\n{\eta}_\O=1} |(\O+\Up)(\xi,\eta)|, \qquad \forall \xi \in \D.$$
		\end{enumerate}
	\end{enumerate}
	Then $\O$ is solvable and its associated operator $T$ is $\rn$-maximal.
\end{cor}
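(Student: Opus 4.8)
The plan is to produce, in each of the two cases, an explicit element of $\Po(\O)$ — so that $\O$ is solvable and the associated closed operator $T$ of Theorem \ref{th_rapp_risol} is available — and then to read off the $\rn$-maximality of $T$ from Theorem \ref{th_op_ass_max}. The common tool is the generalized Lax--Milgram theorem (the Banach--Ne\v{c}as--Babu\v{s}ka theorem): since $\Eo=\D[\noo]$ is a reflexive Banach space (second clause of q-closedness) and each candidate $\O+\Up$ is bounded on $\Eo$, such a form induces a bijection $\Eo\to\Eo^\times$ as soon as (a) there is $c>0$ with $\sup_{\n{\eta}_\O=1}|(\O+\Up)(\xi,\eta)|\geq c\n{\xi}_\O$ for all $\xi\in\D$, and (b) for every $\eta\in\D\setminus\{0\}$ there is $\xi$ with $(\O+\Up)(\xi,\eta)\neq0$. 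Here (a) already yields injectivity of the map $\Eo\to\Eo^\times$, which (as $\D$ is dense in $\Eo$ and $\O+\Up$ is $\noo$-continuous) is condition 1 in the definition of $\Po(\O)$, while (a) and (b) together yield its surjectivity, which is condition 2; so checking (a) and (b) shows $\Up\in\Po(\O)$.

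\emph{Case (ii).} First I would note that (ii') implies (ii''): if (ii'') failed, rescaling witnesses in the norm $\noo$ produces $\{\xi_n\}\sub\D$ with $\n{\xi_n}_\O=1$ and $\sup_{\n{\eta}_\O=1}|(\O+\Up)(\xi_n,\eta)|\to0$, contradicting (ii'). So assume (ii''), which is exactly the inf--sup bound (a) for $\O+\Up$. For (b) one tests with $\xi=\eta$: for $\eta\in\D\setminus\{0\}$ the unit vector $\eta/\n{\eta}$ exhibits $\O(\eta,\eta)/\n{\eta}^2\in\rn_\O$ and $\Up(\eta,\eta)/\n{\eta}^2\in\rn_\Up$, so the disjointness $\rn_\O\cap(-\rn_\Up)=\varnothing$ forces $(\O+\Up)(\eta,\eta)\neq0$. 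Hence $\O+\Up$ realizes a bijection $\Eo\to\Eo^\times$, i.e. $\Up\in\Po(\O)$; thus $\O$ is solvable, and applying Theorem \ref{th_op_ass_max} to this $\Up$ gives that $T$ is $\rn$-maximal.

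\emph{Case (i).} Being a proper convex subset of $\C$, $\rn$ is contained in a closed half-plane $H=\{\mu\in\C:\Re(\bar a\mu)\geq b\}$ with $|a|=1$ and $b\in\R$; then $\rn_\O\sub\rn\sub H$ gives $\Re(\bar a\,\O(\xi,\xi))\geq b\n{\xi}^2$ for all $\xi\in\D$. I would choose $\lambda\in\C$ with $\Re(\bar a\lambda)=b-1$, so that $\lambda\notin H$, hence $\lambda\in\rn^c$, and for every $\xi\in\D$
\[
|(\O-\lambda\iota)(\xi,\xi)|\ \geq\ \Re\!\big(\bar a\,(\O-\lambda\iota)(\xi,\xi)\big)\ =\ \Re\!\big(\bar a\,\O(\xi,\xi)\big)-\Re(\bar a\lambda)\n{\xi}^2\ \geq\ \n{\xi}^2 .
\]
Testing with $\xi=\eta$ this gives (b) for $\O-\lambda\iota$ at once. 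For (a): if $\{\xi_n\}\sub\D$ has $\n{\xi_n}_\O=1$ and $\sup_{\n{\eta}_\O=1}|(\O-\lambda\iota)(\xi_n,\eta)|\to0$, then (testing $\eta=\xi_n$) $\n{\xi_n}^2\leq|(\O-\lambda\iota)(\xi_n,\xi_n)|\to0$, so $\n{\xi_n}\to0$, whence $\O(\xi_n,\xi_n)=(\O-\lambda\iota)(\xi_n,\xi_n)+\lambda\n{\xi_n}^2\to0$; now hypothesis (i) forces $\n{\xi_n}_\O\to0$, contradicting $\n{\xi_n}_\O=1$. So $\O-\lambda\iota$ realizes a bijection $\Eo\to\Eo^\times$, i.e. $-\lambda\iota\in\Po(\O)$; thus $\O$ is solvable, and since $\lambda\in\rn^c$ the last assertion of Theorem \ref{th_op_ass_max} gives that $T$ is $\rn$-maximal.

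The crux is the surjectivity $\Eo\to\Eo^\times$, and more precisely the fact that the inf--sup estimate (a) must be taken in the form norm $\noo$, not in $\nor$: the a priori bounds at hand — $|(\O-\lambda\iota)(\xi,\xi)|\geq\n{\xi}^2$ in case (i), and, in case (ii), the sole consequence of $\rn_\O\cap(-\rn_\Up)=\varnothing$ obtained by taking $\xi=\eta$ — control $\n{\xi}$ and $|\O(\xi,\xi)|$ but not $\n{\xi}_\O$, and hypotheses (i) and (ii')/(ii'') are precisely what bridges this gap. The remaining points — reducing (ii') to (ii''), the homogeneity arguments, the $\noo$-boundedness of $\iota$, and the density of $\D$ in $\Eo$ — are routine.
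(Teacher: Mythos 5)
Your argument is correct and ends exactly where the paper's does: exhibit an element of $\Po(\O)$ and invoke Theorem \ref{th_op_ass_max}. The difference lies entirely in how solvability is obtained. The paper disposes of it in one line by citing \cite[Theorem 5.2, Corollary 5.3, Theorem 5.4]{RC_CT}, which are precisely the criteria corresponding to your cases (i), (ii') and (ii''); you instead re-derive them from the Banach--Ne\v{c}as--Babu\v{s}ka theorem on the reflexive space $\Eo=\D[\noo]$. Your verification is sound: identifying conditions 1.\ and 2.\ in the definition of $\Po(\O)$ with injectivity and surjectivity of the induced map $\Eo\to\Eo^\times$ is legitimate because $\O+\Up$ (resp.\ $\O-\lambda\iota$) is $\noo$-bounded and $\D=\Eo$ as sets; the reduction of (ii') to (ii'') by homogeneity is the standard one; the nondegeneracy test $\xi=\eta$ combined with $\rn_\O\cap(-\rn_\Up)=\varnothing$ (resp.\ with the lower bound $|(\O-\lambda\iota)(\xi,\xi)|\geq\n{\xi}^2$ coming from a supporting half-plane of $\rn$) is exactly what the second BNB condition needs; and in case (i) the choice of $\lambda$ strictly outside that half-plane secures both $\lambda\in\rn^c$ and the $\nor$-coercivity that hypothesis (i) then upgrades to the inf--sup bound in $\noo$. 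What you gain is a self-contained proof in place of an external citation; you lose nothing, since the results of \cite{RC_CT} are proved in essentially this way.

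One caveat, which you inherit from the statement rather than introduce: in case (ii), Theorem \ref{th_op_ass_max} requires $\rn\cap(-\rn_\Up)=\varnothing$, whereas hypothesis (ii) only supplies the weaker condition $\rn_\O\cap(-\rn_\Up)=\varnothing$ (weaker because $\rn_\O\sub\rn$). The stronger condition is what the proof of that theorem actually uses, namely to conclude that $T'+B$ is injective for \emph{every} extension $T'$ of $T$ with numerical range in $\rn$, not merely in $\ol{\rn_\O}$. In case (i) the issue does not arise, since you arrange $\lambda\in\rn^c$ directly. The paper's own one-line proof glosses over the same point, so this is a remark on the corollary rather than a defect specific to your argument; to make the write-up airtight as stated one should either strengthen (ii) to $\rn\cap(-\rn_\Up)=\varnothing$ or note that the conclusion obtained is maximality relative to a convex set containing $\rn_\O$ and disjoint from $-\rn_\Up$.
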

\begin{proof}
	This is an application of \cite[Theorem 5.2, Corollary 5.3, Theorem 5.4]{RC_CT} and of \hyperref[th_op_ass_max]{Theorem} \ref{th_op_ass_max}.
\end{proof}

\no	The $\rn$-maximality of operators associated to solvable forms holds in any case if the numerical range of the form is contained in a strip.

\begin{teor}
	\label{th_strip_op_assoc}
	Let $\O$ be a solvable sesquilinear form on $\D$ with respect to a norm $\noo$ and with numerical range $\rn_\O$ contained in a strip $\St$. Let $T$ be its associated operator with numerical range $\rn_T$. Then $\ol{\rn_T}^c\subseteq \rho(T)$ and $T$ is $\rn$-maximal, where $\rn$ is any proper, convex subset of $\C$ containing $\rn_T$.
\end{teor}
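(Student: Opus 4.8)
The plan is to split $\O$ into its real and imaginary parts, after a preliminary linear normalisation of the strip. By \hyperref[th_rapp_risol]{Theorem} \ref{th_rapp_risol}, $T$ is densely defined and closed, and by \cite[Proposition 4.13]{RC_CT} $\rn_T$ is dense in $\rn_\O$. Write $\St=a\St_\alpha+b$ with $a\in\C\setminus\{0\}$, $b\in\R$, $\alpha\ge 0$, and set $\O':=\frac{1}{a}(\O-b\iota)$. A routine verification (using, for the last point, the uniqueness clause of \hyperref[th_rapp_risol]{Theorem} \ref{th_rapp_risol}) shows that $\O'$ is q-closed with respect to $\noo$, that $\Up\mapsto\frac{1}{a}\Up+\frac{b}{a}\iota$ carries $\Po(\O)$ into $\Po(\O')$ (so $\O'$ is solvable), and that the operator associated to $\O'$ is $\frac{1}{a}(T-bI)$. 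Since $\rn_{\O'}=\frac{1}{a}(\rn_\O-b)\subseteq\St_\alpha$ while resolvent set, numerical range and $\rn$-maximality transform accordingly under $T\mapsto\frac{1}{a}(T-bI)$, it suffices to treat the case $\rn_\O\subseteq\St_\alpha$, a horizontal strip.

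So assume $\rn_\O\subseteq\St_\alpha$. Then $\rn_{\Im\O}=\{\Im\mu:\mu\in\rn_\O\}\subseteq[-\alpha,\alpha]$, hence the symmetric form $\Im\O$ is bounded on $\D$ and, being densely defined, extends uniquely to a bounded form $\pin{B\cdot}{\cdot}$ on $\H$ with $B\in\B(\H)$ symmetric. Put $\Theta:=\Re\O=\O-i\pin{B\cdot}{\cdot}$ on $\D$. Then $\Theta$ is symmetric and q-closed with respect to $\noo$ (its boundedness constant exceeds that of $\O$ by at most $\n{B}\alpha^2$), and it is solvable: for $\Up\in\Po(\O)$ one has $\Up+i\pin{B\cdot}{\cdot}\in\Po(\Theta)$, since $\Theta+(\Up+i\pin{B\cdot}{\cdot})=\O+\Up$ and the two conditions defining $\Po$ depend only on that sum. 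By \cite[Corollary 4.14]{RC_CT} the operator $S$ associated to the symmetric solvable form $\Theta$ is \emph{self-adjoint}. Comparing the uniqueness characterisations of the operators associated to $\O$ and to $\Theta$ in \hyperref[th_rapp_risol]{Theorem} \ref{th_rapp_risol} — the relation $\O(\xi,\eta)=\pin{\chi}{\eta}$ on a dense subset of $\D[\noo]$ is equivalent to $\Theta(\xi,\eta)=\pin{\chi-iB\xi}{\eta}$ on that same set — gives $D(T)=D(S)$ and $T=S+iB$. In particular $T^*=S-iB$, so $D(T^*)=D(S^*)=D(S)=D(T)$.

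Now \hyperref[cor_strip_auto]{Corollary} \ref{cor_strip_auto}, applied to the closed strip $\overline{\St_\alpha}$, yields $\overline{\St_\alpha}^c\subseteq\rho(T)$. I upgrade this to $\overline{\rn_T}^c\subseteq\rho(T)$ as follows. By \hyperref[lem_intro]{Lemma} \ref{lem_intro}, each connected component $C$ of $\overline{\rn_T}^c$ is contained in $\rho(T)$ or in $\sigma_r(T)\subseteq\sigma(T)\subseteq\overline{\St_\alpha}$. But $C$ is an open set which is either an open half-plane (when $\overline{\rn_T}$ is a strip) or all of $\overline{\rn_T}^c$; in the first case $C\not\subseteq\overline{\St_\alpha}$ because a strip contains no half-plane, and in the second case $C\subseteq\overline{\St_\alpha}$ would force $\overline{\St_\alpha}^c\subseteq\overline{\rn_T}$, hence $\overline{\rn_T}=\C$ by convexity (the complement of a closed strip being a union of two opposite half-planes), contradicting $\rn_T\subseteq\St_\alpha$. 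Thus no component meets $\sigma_r(T)$, so $\overline{\rn_T}^c\subseteq\rho(T)$, and in particular $\sigma_r(T)\subseteq\overline{\rn_T}$; together with $\sigma_p(T)\subseteq\rn_T$ and $\sigma_c(T)\subseteq\overline{\rn_T}$ this gives $\sigma(T)\subseteq\overline{\rn_T}$. Finally, let $\rn$ be any proper convex subset of $\C$ with $\rn_T\subseteq\rn$; its closure is again a proper subset of $\C$, so $\overline{\rn}^c$ is nonempty and contained in $\overline{\rn_T}^c\subseteq\rho(T)$. Choosing $\lambda\in\overline{\rn}^c\subseteq\rn^c$ we get $R(T-\lambda I)=\H$, and since $\rn_T\subseteq\rn$, \hyperref[pro_rn_max_dens]{Proposition} \ref{pro_rn_max_dens} shows that $T$ is $\rn$-maximal.

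The crux is the middle paragraph: one must check carefully that $T=S+iB$ and, above all, that the associated operator $S$ of the symmetric solvable form $\Re\O$ is \emph{self-adjoint} rather than merely maximal symmetric — this is precisely what makes both defect indices of $T$ vanish, and therefore puts the whole complement of the strip, and then of $\overline{\rn_T}$, inside $\rho(T)$, instead of just one half-plane. The normalisation in the first paragraph is routine but must be carried out for the forms themselves, so that \hyperref[th_rapp_risol]{Theorem} \ref{th_rapp_risol} and \cite[Corollary 4.14]{RC_CT} stay applicable.
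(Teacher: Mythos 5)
Your proof is correct and follows essentially the same route as the paper's: decompose $\O=\Re\O+i\Im\O$, invoke \cite[Corollary 4.14]{RC_CT} to get self-adjointness of the operator $S$ associated to the solvable symmetric form $\Re\O$, identify $T=S+iB$, and then use the bounded-perturbation argument (your appeal to \hyperref[cor_strip_auto]{Corollary} \ref{cor_strip_auto} encapsulates exactly the perturbation step the paper cites from \hyperref[th_car_strip]{Theorem} \ref{th_car_strip}) together with \hyperref[pro_rn_max_dens]{Proposition} \ref{pro_rn_max_dens}. Your explicit normalisation of the strip at the level of forms and your upgrade from $\ol{\St_\alpha}^c\subseteq\rho(T)$ to $\ol{\rn_T}^c\subseteq\rho(T)$ simply spell out steps the paper leaves implicit.
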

\begin{proof}
	We can assume again, without loss of generality, that $\ol{\St}=\ol{\St_\alpha}:=\{\lambda \in \C:|\Im \lambda|\leq \alpha\}$, for some $\alpha\geq 0$.\\
	Consider the real and imaginary parts $\Re \O$, $\Im \O$ of $\O$. Since 
	\begin{equation}
	\label{O=Re O+iIm O}
	\O(\xi,\xi)=\Re \O(\xi,\xi)+i\Im \O(\xi,\xi), \qquad \forall \xi \in \D,
	\end{equation}
	and $\rn_\O\subseteq \St$, then  $\Im \O$  has numerical range in $[-\alpha,\alpha]$, so it is bounded and it extends to a bounded sesquilinear form $\Psi$ on $\H$. Moreover, $\Re \O$ is solvable with respect to $\noo$, being a difference of a solvable form and a bounded form. \\
	Let $S$ be the operator associated to $\Re \O$ and $B$ be the bounded operator such that $\Psi(\xi,\eta)=\pin{B\xi}{\eta}$, for all $\xi,\eta\in \H$. From (\ref{O=Re O+iIm O}) it follows that $S+iB$ is exactly the operator associated to $\O$, i.e., $T=S+iB$. But $S$ is self-adjoint by \cite[Corollary 4.14]{RC_CT}, and $B$ is, too. Therefore, $T=S+iB$ is the decomposition of \hyperref[lem_decomp_strip]{Lemma} \ref{lem_decomp_strip}. With the same argument of the resolvent set under perturbation used in \hyperref[th_car_strip]{Theorem} \ref{th_car_strip}, $\ol{\rn_T}^c\subseteq \rho(T)$, and the rest of the statement follows by \hyperref[pro_rn_max_dens]{Proposition} \ref{pro_rn_max_dens}.
\end{proof}

\no	We recall that (\cite[Definition 4.1]{Second}) a solvable sesquilinear form $\O$ on $\D$ with associated operator $T$ is said {\it hyper-solvable} if $\D=D(|T|^\mez)$. Under this condition one has the following Kato's second type representation (see \cite[Theorem VI.2.23]{Kato} and \cite[Theorem 4.17]{Second})
$$
\O(\xi,\eta)=\pin{U|T|^\mez \xi}{|T^*|^\mez \eta}, \qquad \forall \xi,\eta \in \D,
$$
where $T=U|T|=|T^*|U$ is the polar decomposition of $T$.\\
For hyper-solvable sesquilinear forms the converse of \hyperref[th_strip_op_assoc]{Theorem} \ref{th_strip_op_assoc} holds as follows.

\begin{pro}
	Let $T$ be a densely defined $\rn$-maximal operator, where $\rn$ is contained in a strip $\St$, and in particular $\ol{\rn}^c\subseteq \rho(T)$. Then there exists a unique hyper-solvable sesquilinear form with associated operator $T$.		
\end{pro}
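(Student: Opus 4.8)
The plan is to reverse-engineer the decomposition used throughout Section~\ref{sec:strip}. Given a densely defined $\rn$-maximal operator $T$ with $\rn$ contained in a strip and $\ol{\rn}^c\subseteq \rho(T)$, I would first apply a linear transformation to reduce to the case $\ol{\St}=\ol{\St_\alpha}=\{\lambda\in\C:|\Im\lambda|\leq\alpha\}$; since such a transformation affects neither the hypotheses nor the conclusion (up to the obvious change of form), this is harmless. By \hyperref[lem_decomp_strip]{Lemma}~\ref{lem_decomp_strip}, write $T=S+iB$ with $S$ symmetric, $D(S)=D(T)$, and $B\in\B(\H)$ symmetric. The assumption $\ol{\rn}^c\subseteq\rho(T)$ together with $\ol{\St_\alpha}^c\subseteq\ol{\rn}^c$ gives $\ol{\St_\alpha}^c\subseteq\rho(T)$, so by \hyperref[cor_strip_auto]{Corollary}~\ref{cor_strip_auto} (or its proof) $S$ is self-adjoint. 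In particular $S$ is semibounded only if… — here I must be careful: $S$ need not be semibounded, so $|T|^{1/2}$ and the form domain require the full functional calculus of the self-adjoint operator $S$ (and of $T$), not just sectoriality.

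**The form.** I would then define $\D:=D(|T|^{1/2})$ and the candidate form $\O$ on $\D$ by the Kato second-representation formula, i.e.\ $\O(\xi,\eta)=\pin{U|T|^{1/2}\xi}{|T^*|^{1/2}\eta}$ where $T=U|T|$ is the polar decomposition; alternatively, and perhaps more transparently, I would build $\O$ from the self-adjoint part: since $S$ is self-adjoint, $\Re\O(\xi,\eta):=\pin{|S|^{1/2}\xi}{\operatorname{sgn}(S)|S|^{1/2}\eta}$ is the closed symmetric form of $S$ on $D(|S|^{1/2})$, and I would set $\O:=\Re\O+i\,\Psi$, where $\Psi(\xi,\eta)=\pin{B\xi}{\eta}$ is bounded on $\H$. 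The crux is then to check three things: (a) $D(|S|^{1/2})=D(|T|^{1/2})$, so that the form domain is well-defined and the form is hyper-solvable; (b) $\O$ is solvable with respect to the graph-type norm $\n{\xi}_\O^2=\n{|S|^{1/2}\xi}^2+\n{\xi}^2$, which should follow because $\Re\O$ is closed (hence solvable, as the form of a self-adjoint operator) and adding the bounded perturbation $i\Psi$ preserves solvability, exactly as argued in the proof of \hyperref[th_strip_op_assoc]{Theorem}~\ref{th_strip_op_assoc}; (c) the operator associated to $\O$ is precisely $T$. For (c) I would invoke the uniqueness clause of \hyperref[th_rapp_risol]{Theorem}~\ref{th_rapp_risol} and the fact that $S+iB=T$ is the \hyperref[lem_decomp_strip]{Lemma}~\ref{lem_decomp_strip} decomposition, mirroring the reasoning in \hyperref[th_strip_op_assoc]{Theorem}~\ref{th_strip_op_assoc} run in reverse.

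**Uniqueness.** For uniqueness of the hyper-solvable form, suppose $\O'$ is another hyper-solvable solvable form with associated operator $T$. Hyper-solvability forces $\D'=D(|T|^{1/2})=\D$, so both forms live on the same domain. Applying the decomposition to the common associated operator: $\Im\O'$ must be the bounded form extending $\Im\O'$, and since the associated operator $T=S+iB$ has a \emph{unique} \hyperref[lem_decomp_strip]{Lemma}~\ref{lem_decomp_strip} decomposition, one recovers $\Im\O'=\Psi$ on $\D$ and the real part of $\O'$ is a closed form representing the self-adjoint $S$; but a self-adjoint operator has a unique associated closed form (its form is determined via the spectral theorem), so $\Re\O'=\Re\O$ on $D(|S|^{1/2})=\D$, whence $\O'=\O$.

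**Main obstacle.** I expect the genuine difficulty to be step (a), the identity $D(|S|^{1/2})=D(|T|^{1/2})$ — equivalently, that adding the bounded imaginary perturbation $iB$ to the self-adjoint $S$ does not change the square-root domain. This is a form-perturbation fact (the forms of $S+iB$ and of $|S|$ have the same domain because $B$ is bounded, so the associated nonnegative operators $|T|$ and $|S|$ have comparable forms), but making it precise requires identifying $D(|T|^{1/2})$ with the form domain of $T$, which in turn relies on $\ol{\St_\alpha}^c\subseteq\rho(T)$ to run the construction — and this is exactly where the hypothesis "$\ol{\rn}^c\subseteq\rho(T)$" (rather than mere $\rn$-maximality) is used essentially. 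Once (a) is in hand, (b) and (c) are routine given \hyperref[th_rapp_risol]{Theorem}~\ref{th_rapp_risol}, \hyperref[th_strip_op_assoc]{Theorem}~\ref{th_strip_op_assoc}, and \hyperref[lem_decomp_strip]{Lemma}~\ref{lem_decomp_strip}.
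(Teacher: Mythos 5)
Your strategy is constructive where the paper's is not: the author disposes of this proposition in one line, deducing $D(T)=D(T^*)$ from Corollary \ref{cor_strip_auto} and then citing \cite[Theorem 5.1]{Second}, which is precisely the statement that a closed, densely defined operator with $D(T)=D(T^*)$ and nonempty resolvent set admits a unique hyper-solvable representing form. You are in effect re-proving that external theorem in the strip setting. That is legitimate, but you have not closed the one step that carries all the weight, and you say so yourself.

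That step is your (a): $D(|S|^{\mez})=D(|T|^{\mez})$. Everything else you do --- solvability of $\Re\O$ as the spectral form of the self-adjoint $S$, stability of solvability under the bounded perturbation $i\Psi$, identification of the associated operator via Theorem \ref{th_rapp_risol} and Lemma \ref{lem_decomp_strip} --- produces a \emph{solvable} form on $D(|S|^{\mez})$ with associated operator $T$; since hyper-solvability is by definition the identity (a), without it you have only rerun Theorem \ref{th_strip_op_assoc} backwards. Worse, the route you sketch for (a), ``identifying $D(|T|^{\mez})$ with the form domain of $T$,'' is the Kato square root property, which fails for general m-sectorial operators (McIntosh's counterexample), so it cannot be invoked as a generic form-perturbation fact. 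What actually closes (a) in this setting is either a citation of \cite[Corollary 1.3]{Curgus} (or directly \cite[Theorem 5.1]{Second}), or a Heinz-type argument: from $\n{T\xi}^2\leq 2\n{S\xi}^2+c\n{\xi}^2$ and $\n{S\xi}^2\leq 2\n{T\xi}^2+c\n{\xi}^2$ on $D(T)=D(S)$ one gets that $|T|^2+I$ and $|S|^2+I$ are mutually form-bounded, and operator monotonicity of the square root then transfers this to $|T|+I$ and $|S|+I$, whence $D(|T|^{\mez})=D(|S|^{\mez})$. The same identity is used again, silently, in your uniqueness argument when you declare $\Re\O'$ to be ``the'' form of $S$ on $\D'=D(|T|^{\mez})$. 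Uniqueness is in any case obtained more cheaply from the second representation formula quoted just before the proposition: every hyper-solvable form with associated operator $T$ equals $\pin{U|T|^{\mez}\,\cdot}{|T^*|^{\mez}\,\cdot}$, which depends only on $T$.
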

\begin{proof}
	It is a direct consequence of \hyperref[cor_strip_auto]{Corollary} \ref{cor_strip_auto} and \cite[Theorem 5.1]{Second}.	
\end{proof}

\no Moreover, next result simplifies the criterion of Lemma 4.14 of \cite{Second} when the numerical range of the form is contained in a strip (see also Corollary 4.16 of \cite{Second}). 

\begin{cor}
	If $\O$ is a solvable sesquilinear form on $\D$ with respect to an inner product and with associated operator $T$. If the numerical range $\rn_\O$ of $\O$ is contained in a strip, then the following statements are equivalent.
	\begin{enumerate}
		\item $\D=D(|T|^\mez)$, i.e., $\O$ is hyper-solvable;
		\item $\D\sub D(|T|^\mez)$;
		\item $\D\supseteq D(|T|^\mez)$.
	\end{enumerate}
\end{cor}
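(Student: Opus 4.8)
The implications $(1)\Rightarrow(2)$ and $(1)\Rightarrow(3)$ are trivial, so the plan is to prove $(2)\Rightarrow(1)$ and $(3)\Rightarrow(1)$, and I would first reduce to the self-adjoint case. After a linear change of variable (a routine reduction changing neither $\D$, nor solvability with respect to an inner-product norm, nor the validity of $(1)$--$(3)$) we may assume $\ol{\St}=\ol{\St_\alpha}=\{\lambda\in\C:|\Im\lambda|\leq\alpha\}$. Exactly as in the proof of Theorem \ref{th_strip_op_assoc}, $\Im\O$ has numerical range in $[-\alpha,\alpha]$ and extends to a bounded form $\Psi=\pin{B\cdot}{\cdot}$ with $B\in\B(\H)$ self-adjoint, the form $\Re\O$ is solvable with respect to $\noo$ (being $\O$ minus the bounded form $\Psi$), its associated operator $S$ is self-adjoint by \cite[Corollary 4.14]{RC_CT}, one has $T=S+iB$, and $\D$ is the common form domain of $\O$ and $\Re\O$. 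Since $B$ is bounded, $D(T)=D(S)$ with equivalent graph norms, so the self-adjoint operators $T^*T+I$ and $S^*S+I$ have equivalent quadratic forms; the Heinz inequality then yields $D(|T|^\mez)=D((T^*T)^{1/4})=D((S^*S)^{1/4})=D(|S|^\mez)$. Hence the three statements for $(\O,T)$ coincide with the three statements for the symmetric solvable form $\Re\O$ and its associated self-adjoint operator $S$, and it suffices to treat the symmetric case.

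So let $\O$ now be symmetric, solvable with respect to an inner-product norm $\noo$ on $\D$, with associated self-adjoint operator $S$, and put $\n{\xi}_\sim^2:=\n{|S|^\mez\xi}^2+\n{\xi}^2$. Recall that $D(S)$ is $\noo$-dense in $\D$ (Theorem \ref{th_rapp_risol}) and $\n{\cdot}_\sim$-dense in $D(|S|^\mez)$, and that $\O(\xi,\eta)=\pin{S\xi}{\eta}=\pin{|S|^\mez\xi}{(\operatorname{sgn}S)|S|^\mez\eta}$ for $\xi,\eta\in D(S)$. The crux is that either one of the one-sided inclusions already forces $\noo$ and $\n{\cdot}_\sim$ to be equivalent on whichever of $\D$, $D(|S|^\mez)$ sits inside the other; once this is known, completeness of $\D[\noo]$ and of $D(|S|^\mez)[\n{\cdot}_\sim]$, together with their common dense subspace $D(S)$, forces $\D=D(|S|^\mez)$, i.e.\ statement $(1)$. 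Under hypothesis $(2)$, the everywhere-defined closed operator $|S|^\mez$ is bounded from $\D[\noo]$ into $\H$ by the closed graph theorem, so $\n{\cdot}_\sim\lesssim\noo$ on $\D$; for the reverse I would fix $\Up\in\Po(\O)$, use that solvability makes $\xi\mapsto(\O+\Up)(\xi,\cdot)$ a topological isomorphism $\Eo\to\Eo^\times$, so that $\noo\asymp\sup_{\eta\in\D,\,\n{\eta}_\O=1}|(\O+\Up)(\xi,\eta)|$, and bound this supremum — via the displayed identity for $\O$ (extended from $D(S)$ to $\D$ using the bound just obtained) and $|\Up(\xi,\eta)|\leq\n{\Up}\n{\xi}\n{\eta}$ — by a constant times $\n{\xi}_\sim$. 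Under hypothesis $(3)$ instead, the inclusion $D(|S|^\mez)[\n{\cdot}_\sim]\hookrightarrow\D[\noo]$ is a closed, hence bounded, map, giving $\noo\lesssim\n{\cdot}_\sim$ on $D(|S|^\mez)$, and the reverse inequality there follows once more from the solvability estimate restricted to $D(|S|^\mez)$. The most economical write-up is presumably to invoke the characterization of hyper-solvability in \cite[Lemma 4.14]{Second} (see also \cite[Corollary 4.16]{Second}), noting that the solvability estimate above is precisely what upgrades the one-sided domain inclusion appearing there to the accompanying norm comparison.

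The main obstacle is exactly this upgrading of a single one-sided inclusion to a two-sided equivalence of norms: the ``easy'' inequality in each direction is a closed-graph argument using only q-closedness, whereas the matching ``hard'' inequality genuinely uses the surjectivity onto $\Eo^\times$ built into solvability — which is why the hypothesis cannot be relaxed from solvability to mere closedness of $\O$ (compare Remark \ref{oss_max_n_n2} and the role of solvability elsewhere in the paper). A minor technical point is the verification that the bounded self-adjoint summand $B$ does not alter the square-root domain, i.e.\ $D(|T|^\mez)=D(|S|^\mez)$ although $S$ and $B$ need not commute; this is where the passage through the Heinz inequality, rather than a naive perturbation argument, is needed.
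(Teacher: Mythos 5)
Your proposal is correct, but it takes a genuinely more self-contained route than the paper, whose proof is a three-line citation chain: Theorem \ref{th_strip_op_assoc} and Corollary \ref{cor_strip_auto} give $D(T)=D(T^*)$; \'{C}urgus's result \cite[Corollary 1.3]{Curgus} then yields $D(|T|^\mez)=D(|T^*|^\mez)$; and Lemma 4.14 of \cite{Second} (the two-sided hyper-solvability criterion involving both $D(|T|^\mez)$ and $D(|T^*|^\mez)$) collapses to the stated one-sided equivalences. You obtain the ``square-root domains coincide'' step via the decomposition $T=S+iB$ and Heinz--L\"owner, proving $D(|T|^\mez)=D(|S|^\mez)$ --- essentially the content of \'{C}urgus's theorem here, since $T^*=S-iB$ has the same domain and equivalent graph norm --- and then, instead of citing \cite[Lemma 4.14]{Second}, you reprove the needed special case directly: closed graph gives the easy norm inequality from either one-sided inclusion, the open-mapping isomorphism $\Eo\to\Eo^\times$ built into solvability gives the matching one, and completeness plus density of $D(S)$ in both $\D[\noo]$ and $D(|S|^\mez)$ forces equality. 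This buys independence from \cite{Second} and makes visible exactly where solvability (rather than mere q-closedness) enters; the cost is length, and the thinnest spot is the inequality $\n{\cdot}_\sim\lesssim\noo$ on $D(|S|^\mez)$ under hypothesis \emph{3.}, which needs a concrete test vector (e.g.\ $\eta=(\operatorname{sgn}S)\xi$, using $\O(\xi,(\operatorname{sgn}S)\xi)=\n{|S|^\mez\xi}^2$) rather than the bare ``solvability estimate''; this is routine to supply.
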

\begin{proof}
	By \hyperref[th_strip_op_assoc]{Theorem} \ref{th_strip_op_assoc} and \hyperref[cor_strip_auto]{Corollary}  \ref{cor_strip_auto}, $D(T)=D(T^*)$. Hence \cite[Corollary 1.3]{Curgus} implies that $D(|T|^\mez)=D(|T^*|^\mez)$. Therefore we conclude with Lemma 4.14 of \cite{Second}.
\end{proof}

\no Finally, by \hyperref[th_car_n_max_set]{Theorem} \ref{th_car_n_max_set}, it is also possible to make more precise the correspondence, given by \cite[Theorem VI.2.6]{Kato}, between densely defined, closed, sectorial forms and m-sectorial operators as follows.

\begin{cor}
	\label{corr_form_oper_n_max}
	Let $\rn\subset \C$  be a closed, convex subset of a sector of $\C$. There exists a one-to-one correspondence between all closed, densely defined sesquilinear forms with numerical range in $\rn$ and all $\rn$-maximal, densely defined operators on $\H$.
\end{cor}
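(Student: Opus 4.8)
The plan is to exhibit the desired bijection as the restriction of Kato's classical correspondence between densely defined closed sectorial forms and m-sectorial operators (\cite[Theorem VI.2.6]{Kato}), and to use the hypothesis that $\rn$ lies in a sector to check that this restriction does carry each of the two classes in the statement onto the other. Throughout, fix a sector $\Set\subseteq\C$ with $\rn\subseteq\Set$.

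First I would treat the passage from forms to operators. Let $\O$ be a closed, densely defined sesquilinear form with $\rn_\O\subseteq\rn$. Since then $\rn_\O\subseteq\Set$, $\O$ is sectorial, so Kato's first representation theorem (\cite[Theorem VI.2.1]{Kato}) furnishes an m-sectorial operator $T$ representing $\O$; from $\O(\xi,\eta)=\pin{T\xi}{\eta}$ for $\xi\in D(T)$, $\eta\in\D$, taking $\eta=\xi$ shows $\rn_T\subseteq\rn_\O\subseteq\rn$. Being m-sectorial with numerical range in $\Set$, $T$ satisfies $\Set^c\subseteq\rho(T)$; as $\rn\subseteq\Set$ we get $\varnothing\neq\Set^c\subseteq\rn^c\cap\rho(T)$, so $R(T-\lambda I)=\H$ for some $\lambda\in\rn^c$. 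By \hyperref[th_car_n_max_set]{Theorem} \ref{th_car_n_max_set}, $T$ is $\rn$-maximal and densely defined.

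Conversely, let $T$ be an $\rn$-maximal, densely defined operator. By \hyperref[th_car_n_max_set]{Theorem} \ref{th_car_n_max_set}, $\rn^c\subseteq\rho(T)$ and $T$ is closed; moreover $\rn_T\subseteq\rn\subseteq\Set$, so $T$ is sectorial, and $\Set^c\subseteq\rn^c\subseteq\rho(T)$ makes $T$ m-sectorial. Hence \cite[Theorem VI.2.6]{Kato} associates to $T$ a unique densely defined closed sectorial form $\O$ with $T$ as its representing operator; since $D(T)$ is a core of $\O$, the numerical range $\rn_T$ is dense in $\rn_\O$ (cf.\ \cite[Proposition 4.13]{RC_CT}), whence $\rn_\O\subseteq\ol{\rn_T}\subseteq\ol{\rn}=\rn$, the last equality holding because $\rn$ is closed. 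So $\O$ is a closed, densely defined form with numerical range in $\rn$.

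Finally I would observe that the two assignments are mutually inverse, which is precisely the content of the uniqueness part of Kato's correspondence (an m-sectorial operator both determines and is recovered from its representing closed sectorial form) and is unaffected by the passage to these subclasses; this yields the claimed one-to-one correspondence. The point demanding care is the bookkeeping of numerical ranges: one must move between $\rn_\O$ and $\rn_T$ — immediately in one direction and through the density of the form core in the other — and invoke the closedness of $\rn$; and it is exactly the sectoriality of $\rn$ that guarantees $\rn^c\cap\rho(T)\neq\varnothing$, which is what allows \hyperref[th_car_n_max_set]{Theorem} \ref{th_car_n_max_set} to be applied in both directions.
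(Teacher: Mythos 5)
Your proposal is correct and matches the paper's intended argument: the corollary is stated as a direct refinement of Kato's correspondence \cite[Theorem VI.2.6]{Kato}, with \hyperref[th_car_n_max_set]{Theorem} \ref{th_car_n_max_set} identifying the $\rn$-maximal densely defined operators with the m-sectorial operators having numerical range in $\rn$, exactly as you do in both directions. The bookkeeping you carry out (passing between $\rn_\O$ and $\rn_T$ via the core property and the closedness of $\rn$) is precisely the detail the paper leaves implicit.
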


\subsection*{Acknowledgments}
This work was supported by the project ''Problemi spettrali e di rappresentazione in quasi *-algebre di operatori'', 2017, of the ''National Group for Mathematical Analysis, Probability and their Applications'' (GNAMPA – INdAM).

\vspace*{0.5cm}
\begin{center}
	\textsc{Rosario Corso, Dipartimento di Matematica e Informatica} \\
	\textsc{Università degli Studi di Palermo, I-90123 Palermo, Italy} \\
	{\it E-mail address}: {\bf rosario.corso@studium.unict.it}
\end{center}


\begin{thebibliography}{100}
	\addcontentsline{toc}{chapter}{References}
	
	\bibitem{Ando-Nishio}
	T. Ando, K. Nishio, {\it Positive selfadjoint extensions of positive symmetric operators}, Tohoku Math. J. 22, 65–75 (1970).
	
	\bibitem{Arlinski}
	Yu. M. Arlinski\u{\i}, {\it A class of contractions in a Hilbert space}, Ukrain. Math. Zh. 39(6), 691–696 (1987) (in Russian). English translation in Ukr. Math. J. 39(6), 560–564 (1987).
	
	\bibitem{Arlinski2}	
	Yu. M. Arlinski\u{\i}, {\it Characteristic functions of operators of the class $C(\alpha)$}, Izv. Vyssh. Uchebn. Zaved. Mat., 2,  13–21 (1991) (in Russian). English translation in Sov. Math., 35(2), 13-23 (1991).
	
	\bibitem{Bade}
	W. G. Bade, {\it An operational calculus for operators with spectrum in a strip}, Pacific J. Math. 3 (1953), 257-290.
	
	\bibitem{Batty}
	C. J. K. Batty, {\it Unbounded operators: functional calculus, generation, perturbations}, Extr. Math. 24(2) (2009), 99–133.
	
	\bibitem{Batty2}
	C. J. K. Batty, {\it Bounded $H^\infty$-Calculus	for Strip-Type Operators}, Integr. Equ. Oper. Theory 72 (2012), 159–178.
	
	\bibitem{Tp_DB}
	S. Di Bella, C. Trapani, {\it Some representation theorems for sesquilinear forms}, J. Math. Anal. Appl., 451 (2017), 64-83.
	
	\bibitem{RC_CT}
	R. Corso, C. Trapani, {\it Representation theorems for solvable sesquilinear forms}, Integral Equations Operator Theory, 89(1) (2017), 43-68.
	
	\bibitem{Second}
	R. Corso, {\it A Kato's second type representation theorem for solvable sesquilinear forms}, accepted in J. Math. Anal. Appl., 2017, https://doi.org/10.1016/j.jmaa.2017.12.058.	
	
	\bibitem{Curgus}
	B. \'{C}urgus, {\it On the regularity of the critical point infinity of definitizable operators}, Integral Equations Operator Theory, 8 (1985), 462–488.
	
	\bibitem{Delau}
	R. DeLaubenfels, {\it Strongly Continuous Groups, Similarity and Numerical Range on a Hilbert space}, Taiwanese Journal of Mathematics 1, 2 (1997), 127-33.
	
	\bibitem{Engel-Nagel}
	K. J. Engel, R. Nagel, {\it One-parameter Semigroups for Linear Evolution Equations}, Grad. Texts Math., Springer-Verlag, 2000. 
	
	\bibitem{Gust_Rao}
	K. Gustafson, D. Rao, {\it Numerical Range: The Field of Values of Linear Operators and Matrices}, Springer, New York, 1997.
	
	\bibitem{Haase_b}
	M. Haase, {\it The Functional Calculus for Sectorial Operators}, Oper. Theory Adv. Appl., vol. 169, Birkhäuser Verlag, Basel, 2006.
	
	\bibitem{Halmos}
	P. R. Halmos, {\it A Hilbert space problem book}, second edition, Springer-Verlag, New York, 1982.
	
	\bibitem{Kato}
	T. Kato, {\it Perturbation Theory for Linear Operators}, Springer, Berlin, 1966.
	
	\bibitem{Kolmanovich}
	V. Kolmanovich, M. Malamud, {\it An operator analog of the Schwarz–Levner lemma}, Teor. Funkt. Funktsional. Anal. Prilozh., 48, 71–74 (1987) (in Russian). English translation in J. Sov. Math., 49(2), 900–902 (1990).
	
	\bibitem{McIntosh68}
	A. McIntosh, {\it Representation of bilinear forms in Hilbert space by linear operators}, Trans. Amer. Math. Soc., 131, 2 (1968), 365-377.
	
	\bibitem{McIntosh_fc}
	A. McIntosh, {\it Operators which have an $H^\infty$ functional calculus}, in: Miniconference on Operator Theory and Partial Differential Equations (North Ryde, 1986), Austral. Nat. Univ., Canberra, 1986, pp. 210–231. 	
	
	\bibitem{Neumann}
	J. von Neumann, {\it Allgemeine Eigenwertstheorie Hermitescher Funktionalopertoren}, Mathematische Annalen, 102 (1929), 49-131.
	
	\bibitem{Pazy}
	A. Pazy, {\it Semigroups of Linear Operators and Applications to Partial Differential Equations}, Applied Mathematical Sciences, vol. 44, Springer-Verlag, New York, 1983.
	
	\bibitem{Phillips}
	R. S. Phillips, {\it Dissipative operators and hyperbolic systems of partial differential equations}, Trans. Am. Math. Soc., 90 (1959), 193-254.
	
	\bibitem{Schm}
	K. Schm\"{u}dgen, {\it Unbounded Self-adjoint Operators on Hilbert Space}, Springer, Dordrecht, 2012.
	
	\bibitem{Seb_Tarc}
	Z. Sebestyén, Zs. Tarcsay, {\it Characterizations of selfadjoint operators}, Studia Sci. Math. Hungar., 50 (2013), 423–435.
	
	\bibitem{Weid}
	J. Weidmann, {\it Linear Operators in Hilbert Spaces}, Springer-Verlag,  1980.
	
	
	
\end{thebibliography}
\end{document}